\documentclass[10pt,a4paper]{amsart}
\usepackage[english]{babel}
\usepackage{amsmath,amsthm,mathrsfs,hyperref}
\usepackage{amssymb}
\usepackage{aliascnt}

\usepackage{comment}
\usepackage{todonotes}
\usepackage{xspace}
\usepackage{nicefrac}

\DeclareMathOperator{\dom}{dom}

\DeclareMathOperator{\crit}{crit}

\DeclareMathOperator{\cf}{cf}
\DeclareMathOperator{\stem}{stem}

\DeclareMathOperator{\Ult}{Ult}
\newcommand{\chang}{\twoheadrightarrow}

\newcommand{\Lowenheim}{L\"{o}wenheim\xspace}

\newcommand{\Erdos}{Erd\H{o}s\xspace}

\newcommand{\ZFC}{{\rm ZFC}\xspace}
\newcommand{\GCH}{{\rm GCH}\xspace}
\newcommand{\PCF}{{\rm PCF}\xspace}

\newtheorem{theorem}{Theorem}
\newaliascnt{example}{theorem}

\aliascntresetthe{example}
\newaliascnt{fact}{theorem}
\newtheorem{fact}[fact]{Fact}
\aliascntresetthe{fact}
\newaliascnt{corollary}{theorem}
\newtheorem{corollary}[corollary]{Corollary}
\aliascntresetthe{corollary}
\newaliascnt{lemma}{theorem}
\newtheorem{lemma}[lemma]{Lemma}
\aliascntresetthe{lemma}
\newaliascnt{claim}{theorem}
\newtheorem{claim}[lemma]{Claim}
\aliascntresetthe{claim}
\newtheorem{prop}[theorem]{Proposition}
\newtheorem{question}{Question}
\newtheorem{porism}[theorem]{Porism}

\newtheorem*{theorem*}{Theorem}
\newtheorem*{remark}{Remark}
\newtheorem*{example*}{Example}

\newtheorem*{definition}{Definition}

\DeclareMathOperator{\ran}{ran}
\DeclareMathOperator{\ot}{ot}

\DeclareMathOperator{\col}{Col}

\DeclareMathOperator{\ns}{NS}

\newcommand{\p}{\mathcal{P}}
\newcommand{\la}{\langle}
\newcommand{\ra}{\rangle}
\title{Global Chang's Conjecture and singular cardinals}
\author{Monroe Eskew}
\address[Monroe Eskew]{Universit\"{a}t Wien \\ Institut f\"{u}r Mathematik \\ Kurt G\"odel Research Center \\ Kolingasse 14-16 \\ 1090 Wien, Austria }
\email{monroe.eskew@univie.ac.at}

\author{Yair Hayut}
\address[Yair Hayut]{Department of Mathematics \\\
The Hebrew University of Jerusalem \\\
Jerusalem 9190401, Israel}
\email{yair.hayut@mail.huji.ac.il}

\makeatletter
\@namedef{subjclassname@2020}{%
  \textup{2020} Mathematics Subject Classification}
\makeatother

\subjclass[2020]{03E04, 03E10, 03E35, 03E55}
\keywords{Chang's Conjecture, scales, diagonal Prikry forcing}

\begin{document}
\begin{abstract}
We investigate the possibilities of global versions of Chang's Conjecture that involve singular cardinals.  We show some $\ZFC$ limitations on such principles and prove relative to large cardinals that Chang's Conjecture can consistently hold between all pairs of limit cardinals below $\aleph_{\omega^\omega}$.
\end{abstract}
\maketitle

\section{Introduction}
The \Lowenheim-Skolem theorem asserts that for every pair of infinite cardinals $\kappa > \mu$ and every structure $\mathfrak{A}$ on $\kappa$ in a countable language, there is a substructure $\mathfrak B \subseteq \mathfrak A$ of size $\mu$.  ``Chang's Conjecture'' is a type of principle strengthening this theorem to assert similar relationships between sequences of cardinals.  For example $(\kappa_1,\kappa_0) \chang (\mu_1,\mu_0)$ says that for every structure $\mathfrak A$ on $\kappa_1$ in a countable language, there is a substructure $\mathfrak B$ of size $\mu_1$ such that $|\mathfrak B \cap \kappa_0| = \mu_0$.  The following basic observation puts some constraints on this type of principle:

\begin{prop}
\label{model arithmetic}
Suppose $\kappa,\lambda \leq \delta$ and $\kappa^\lambda \geq \delta$.  Then there is a structure $\mathfrak A$ on $\delta$ such that for every $\mathfrak B \prec \mathfrak A$, $$|\mathfrak B \cap \kappa|^{|\mathfrak B \cap \lambda|} \geq |\mathfrak B \cap \delta|.$$
\end{prop}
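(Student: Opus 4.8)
The plan is to read off, from the hypothesis $\kappa^{\lambda}\ge\delta$, an injection $f\colon\delta\to{}^{\lambda}\kappa$, and to build it into the structure $\mathfrak A$ together with an auxiliary ``first difference'' function, so that every $\mathfrak B\prec\mathfrak A$ automatically carries an injection of $\mathfrak B\cap\delta$ into ${}^{(\mathfrak B\cap\lambda)}(\mathfrak B\cap\kappa)$. Passing to cardinalities then gives exactly $|\mathfrak B\cap\kappa|^{|\mathfrak B\cap\lambda|}\ge|\mathfrak B\cap\delta|$.

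Concretely, fix an injection $f\colon\delta\to{}^{\lambda}\kappa$ (it exists since $\kappa^{\lambda}=|{}^{\lambda}\kappa|\ge\delta$) and, using $\lambda\le\delta$ together with a fixed bijection $\delta\cong\delta\times\delta$ to encode ordered pairs as ordinals below $\delta$, define two binary functions on $\delta$: $F(\alpha,\xi)=f(\alpha)(\xi)$ when $\xi<\lambda$ (and $F(\alpha,\xi)=0$ otherwise), and $g(\alpha,\beta)=$ the least $\xi<\lambda$ with $f(\alpha)(\xi)\neq f(\beta)(\xi)$ when $\alpha\neq\beta$ (and $g(\alpha,\beta)=0$ otherwise); the function $g$ is total precisely because $f$ is injective. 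Let $\mathfrak A=(\delta;F,g)$, a structure in a finite language. Given $\mathfrak B\prec\mathfrak A$, define $h\colon\mathfrak B\cap\delta\to{}^{(\mathfrak B\cap\lambda)}(\mathfrak B\cap\kappa)$ by $h(\alpha)=f(\alpha)\restriction(\mathfrak B\cap\lambda)$. Closure of $\mathfrak B$ under $F$ shows that each value $h(\alpha)(\xi)=F(\alpha,\xi)$ lies in $\mathfrak B\cap\kappa$, so $h$ is well defined; closure of $\mathfrak B$ under $g$ shows that distinct $\alpha,\beta\in\mathfrak B\cap\delta$ already disagree at the coordinate $g(\alpha,\beta)\in\mathfrak B\cap\lambda$, so $h$ is injective. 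Hence $|\mathfrak B\cap\delta|\le|{}^{(\mathfrak B\cap\lambda)}(\mathfrak B\cap\kappa)|=|\mathfrak B\cap\kappa|^{|\mathfrak B\cap\lambda|}$, as required.

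I do not expect a real obstacle here: the argument is just a packaging of the trivial fact that $\delta$ embeds into ${}^{\lambda}\kappa$ into a first-order structure. The only points needing a little care are the coding of the two-place functions $F,g$ as honest functions on the ordinal $\delta$, and the degenerate configurations of $\mathfrak B$ — for instance $\mathfrak B\cap\lambda=\emptyset$, where the injectivity of $h$ (furnished by $g$) forces $|\mathfrak B\cap\delta|\le 1=|\mathfrak B\cap\kappa|^{0}$, so no separate case analysis is actually needed. The one idea doing the work is the ``first difference'' function $g$, which guarantees that the coordinate at which any two values of $f$ split is captured inside every elementary submodel.
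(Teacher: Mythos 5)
Your proof is correct, and it is exactly the standard argument this proposition rests on (the paper states it without proof): code a fixed injection $f\colon\delta\to{}^{\lambda}\kappa$ into the structure via the evaluation function $F$ and the ``first difference'' function $g$, so that restriction to $\mathfrak B\cap\lambda$ gives an injection of $\mathfrak B\cap\delta$ into ${}^{(\mathfrak B\cap\lambda)}(\mathfrak B\cap\kappa)$ for every $\mathfrak B\prec\mathfrak A$. Your handling of the degenerate case $\mathfrak B\cap\lambda=\emptyset$ is also right, and the pairing bijection you mention is not even needed since binary function symbols suffice.
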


\begin{corollary}
\label{chang arithmetic}
If $(\kappa_1,\kappa_0) \chang (\mu_1,\mu_0)$, $\nu \leq \kappa_0$, and $\kappa_0^\nu \geq \kappa_1$, then $\mu_0^{\min(\mu_0,\nu)} \geq \mu_1$.
\end{corollary}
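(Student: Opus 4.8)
The plan is to feed the structure produced by \autoref{model arithmetic} directly into the Chang relation. First I would apply \autoref{model arithmetic} with $\delta = \kappa_1$, $\kappa = \kappa_0$, and $\lambda = \nu$. The hypotheses needed there are $\kappa,\lambda \leq \delta$ and $\kappa^\lambda \geq \delta$, which here read $\nu \leq \kappa_0 \leq \kappa_1$ and $\kappa_0^\nu \geq \kappa_1$; these hold by assumption (using $\nu \le \kappa_0$ and the standing convention $\kappa_0 \le \kappa_1$ for the Chang relation). So we obtain a structure $\mathfrak A$ on $\kappa_1$ such that for every $\mathfrak B \prec \mathfrak A$ one has $|\mathfrak B \cap \kappa_0|^{|\mathfrak B \cap \nu|} \geq |\mathfrak B \cap \kappa_1|$.

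Next I would invoke $(\kappa_1,\kappa_0) \chang (\mu_1,\mu_0)$ applied to this very structure $\mathfrak A$: choose $\mathfrak B \prec \mathfrak A$ with $|\mathfrak B| = \mu_1$ and $|\mathfrak B \cap \kappa_0| = \mu_0$. Since the universe of $\mathfrak A$ is $\kappa_1$, we have $\mathfrak B \subseteq \kappa_1$, hence $|\mathfrak B \cap \kappa_1| = |\mathfrak B| = \mu_1$. Moreover, because $\nu \leq \kappa_0$ we get $\mathfrak B \cap \nu \subseteq \mathfrak B \cap \kappa_0$, so $|\mathfrak B \cap \nu| \leq \mu_0$, and trivially $|\mathfrak B \cap \nu| \leq \nu$; therefore $|\mathfrak B \cap \nu| \leq \min(\mu_0,\nu)$.

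Finally I would chain the inequalities. Combining the conclusion of \autoref{model arithmetic} for this $\mathfrak B$ with the computations above, and using monotonicity of cardinal exponentiation in the exponent, $$\mu_1 = |\mathfrak B \cap \kappa_1| \leq |\mathfrak B \cap \kappa_0|^{|\mathfrak B \cap \nu|} = \mu_0^{|\mathfrak B \cap \nu|} \leq \mu_0^{\min(\mu_0,\nu)},$$ which is exactly the desired conclusion.

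I do not expect a genuine obstacle here, since the argument is essentially an unwinding of \autoref{model arithmetic}. The only points demanding a moment's care are verifying that the hypotheses of \autoref{model arithmetic} are actually satisfied by the triple $(\kappa_1,\kappa_0,\nu)$, and observing that the relevant exponent is bounded by $\min(\mu_0,\nu)$ rather than merely by $\mu_0$ or by $\nu$ alone — which is what makes the corollary's exponent $\min(\mu_0,\nu)$ appear.
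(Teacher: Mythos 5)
Your proof is correct and is exactly the intended derivation: the paper treats Corollary \ref{chang arithmetic} as an immediate consequence of Proposition \ref{model arithmetic}, obtained by applying the Chang substructure to the structure witnessing that proposition (with $\delta=\kappa_1$, $\kappa=\kappa_0$, $\lambda=\nu$) and bounding the exponent $|\mathfrak B\cap\nu|$ by $\min(\mu_0,\nu)$, just as you do.
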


From this, we immediately see that under $\GCH$, $(\kappa^+,\kappa) \chang (\mu^+,\mu)$ can only occur when $\cf(\kappa) \geq \cf(\mu)$.  (The consistency of contrary cases is unknown.)  This inspires the following bold conjecture:
\begin{definition}[Global Chang's Conjecture]
We say that the Global Chang's Conjecture holds if for all infinite cardinals $\mu < \kappa$ with $\cf(\mu) \leq \cf(\kappa)$, $(\kappa^+,\kappa)\chang (\mu^+,\mu)$.
\end{definition}

In the paper \cite{EskewHayut2018}, we showed, assuming the consistency of a huge cardinal, that there is a model of $\ZFC+\GCH$ in which $(\kappa^+,\kappa)\chang (\mu^+,\mu)$ holds whenever $\kappa$ is regular and $\mu < \kappa$ is infinite.  Surprisingly, the full Global Chang's Conjecture is inconsistent (even without assuming $\GCH$), as we show in Theorem \ref{theorem: gcc is impossible}. Indeed, there is a tension between instances of Chang's Conjecture at successors of singular cardinals and at double successors of singulars.

Next, we investigate other forms of Global Chang's Conjecture: 
\begin{definition}[Singular Global Chang's Conjecture]
We say that the Singular Global Chang's Conjecture holds if for all infinite $\mu < \kappa$ of the same cofinality, $(\kappa^+,\kappa) \chang (\mu^+,\mu)$.
\end{definition}
Obtaining the Singular Global Chang's Conjecture seems to be hard. We present here a partial result, showing that there is a model in which the Singular Global Chang's Conjecture holds for cardinals below $\aleph_{\omega^\omega}$. 

The paper is organized as follows. In Section \ref{section:limitation} we discuss some relationships between Chang's Conjecture and $\PCF$-theoretic scales, and derive some $\ZFC$ limitations on the simultaneous occurrence of some instances of Chang's Conjecture.  In Section \ref{various}, we introduce the technology for obtaining $(\aleph_{\alpha+1},\aleph_\alpha)\chang (\aleph_{\beta+1},\aleph_\beta)$ for various choices of $\alpha$ and $\beta$ of countable cofinality.  In Section \ref{section: singular gcc} we construct a model in which $(\aleph_{\alpha+1},\aleph_\alpha)\chang (\aleph_{\beta+1},\aleph_\beta)$ holds for all limit ordinals $0 \leq \beta < \alpha < \omega^\omega$.  In Section \ref{section:thread}, we show the consistency of $(\aleph_{\alpha+1},\aleph_\alpha)\chang (\aleph_{\beta+1},\aleph_\beta)$ holding for a fixed $\beta$ but for $\alpha$ ranging over a longer interval of limit ordinals.  We conclude with some open questions.

\section{Limitations on Global Chang's Conjecture}\label{section:limitation}
A useful strengthening of Chang's Conjecture is the following, introduced by Shelah \cite{Shelah91}:
\begin{definition} We say $(\kappa_1,\kappa_0) \chang_\nu (\mu_1,\mu_0)$ if for all structures $\mathfrak A$ on $\kappa_1$ in a countable language, there is a substructure $\mathfrak B$ such that $| \mathfrak B | = \mu_1$, $| \mathfrak B \cap \kappa_0 | = \mu_0$, and $\nu \subseteq \mathfrak B$.
\end{definition}

Note that nothing more is asserted by adding the subscript $\nu$ when $\nu < \omega_1$.  These versions of Chang's Conjecture are robust under mild forcing:

\begin{lemma}
\label{preserve}
Suppose $(\kappa_1,\kappa_0) \chang_\nu (\mu_1,\mu_0)$ and $\mathbb P$ is a $\nu^+$-c.c.\ partial order.  Then $\Vdash_{\mathbb P} (\kappa_1,\kappa_0) \chang_\nu (\mu_1,\mu_0)$.
\end{lemma}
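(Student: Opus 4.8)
The plan is to reflect a name for a structure down to an actual structure in $V$, apply the hypothesis there, and transfer the resulting substructure back up to the extension.

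First I would reduce to the case that $\dot{\mathfrak A}$ is a $\mathbb P$-name for a structure whose universe is $\kappa_1$ and whose operations are named by $\mathbb P$-names $\dot f_n\colon \kappa_1^n\to\kappa_1$, $n<\omega$, since a structure in any (possibly named) countable language can be recoded this way without changing its collection of substructures. For each $n$ and each $\bar\alpha\in\kappa_1^n$, choose in $V$ a maximal antichain $A_{n,\bar\alpha}$ of conditions deciding the value of $\dot f_n(\bar\alpha)$; since that value is forced to exist, $A_{n,\bar\alpha}$ is predense, and by the $\nu^+$-chain condition $|A_{n,\bar\alpha}|\le\nu$. A routine compatibility argument then shows that the set $B_{n,\bar\alpha}$ of all ordinals forced by some condition to be the value of $\dot f_n(\bar\alpha)$ equals $\{v:\exists a\in A_{n,\bar\alpha},\ a\Vdash\dot f_n(\bar\alpha)=v\}$, so $|B_{n,\bar\alpha}|\le\nu$ as well; and for every generic $G$ we have $\dot f_n^G(\bar\alpha)\in B_{n,\bar\alpha}$.

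Next, working in $V$, I would enumerate each $B_{n,\bar\alpha}$ (padding a possibly short list arbitrarily) as $\{h_{n,i}(\bar\alpha):i<\nu\}$ and fold the $\nu$-many functions $h_{n,i}$ into a single function $H_n\colon\kappa_1^{n+1}\to\kappa_1$ by setting $H_n(\xi,\bar\alpha)=h_{n,\xi}(\bar\alpha)$ for $\xi<\nu$ and $H_n(\xi,\bar\alpha)=0$ otherwise. Let $\mathfrak A^*$ be the structure $(\kappa_1;(H_n)_{n<\omega})$; it lies in $V$ and has a countable language, so $(\kappa_1,\kappa_0)\chang_\nu(\mu_1,\mu_0)$ applied to it yields a substructure $\mathfrak B$ with $|\mathfrak B|=\mu_1$, $|\mathfrak B\cap\kappa_0|=\mu_0$, and $\nu\subseteq\mathfrak B$. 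The point of the folding is the standard observation that, because $\nu\subseteq\mathfrak B$, closure of $\mathfrak B$ under the $H_n$ is the same as closure under all the $h_{n,i}$; this is how one moves between countable languages and languages of size $\nu$ in the presence of $\chang_\nu$.

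Finally, let $G$ be $\mathbb P$-generic; I claim the same set $\mathfrak B$ witnesses the Chang relation for $\dot{\mathfrak A}^G$ in $V[G]$. For $\bar\alpha\in\mathfrak B^n$ we have $\dot f_n^G(\bar\alpha)\in B_{n,\bar\alpha}=\{H_n(i,\bar\alpha):i<\nu\}\subseteq\mathfrak B$, using $\nu\subseteq\mathfrak B$ and closure under $H_n$, so $\mathfrak B$ is closed under every operation of $\dot{\mathfrak A}^G$, i.e.\ is the universe of a substructure. The conditions $|\mathfrak B|=\mu_1$, $|\mathfrak B\cap\kappa_0|=\mu_0$, and $\nu\subseteq\mathfrak B$ persist from $V$ to $V[G]$ since they are witnessed by bijections and an inclusion already present in $V$. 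I do not expect a genuine obstacle here: the only real content is the antichain bound $|B_{n,\bar\alpha}|\le\nu$ and the bookkeeping ensuring that $\mathfrak A^*$ captures every possible generic interpretation of $\dot{\mathfrak A}$, and no cardinal preservation beyond what $\nu^+$-c.c.\ supplies is needed (indeed $\nu\subseteq\mathfrak B$ forces $\mu_1\ge\nu$, so the target cardinals do remain cardinals in the intended applications).
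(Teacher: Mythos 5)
The paper states Lemma \ref{preserve} without proof, treating it as a standard robustness fact, and your argument is exactly the standard one: use the $\nu^+$-c.c.\ to bound by $\nu$ the set of possible values of each named operation at each tuple, fold these into countably many functions $H_n\colon\kappa_1^{n+1}\to\kappa_1$ in $V$, apply $(\kappa_1,\kappa_0)\chang_\nu(\mu_1,\mu_0)$ there, and use $\nu\subseteq\mathfrak B$ plus closure under the $H_n$ to see that the same $\mathfrak B$ is closed under the generic structure's operations. This is correct, and your closing parenthetical is the right caveat: the literal conclusion presupposes that $\mu_0$, $\mu_1$ (and $\nu$ as a cardinal) are not collapsed, which a $\nu^+$-c.c.\ forcing guarantees only above $\nu$ but which does hold in all of the paper's applications.
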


Of particular interest is the case $\nu = \mu_0$.   The following lemma gives a stepping-up of the Chang's Conjecture if the distance between the cardinals considered is not too great, or enough $\GCH$ holds relatively close to the upper end.  A proof is contained in \cite[Section 2.2.1]{Foreman2009}.

\begin{lemma}
\label{stepup}
Suppose $(\kappa_1,\kappa_0) \chang_{\nu} (\mu_1,\mu_0)$.
\begin{enumerate}
\item If $\kappa_0 = \mu_0^{+\nu}$, then $(\kappa_1,\kappa_0) \chang_{\mu_0} (\mu_1,\mu_0)$.
\item If $\lambda \leq \mu_0$ and there is $\kappa \leq \kappa_0$ such that $\kappa_0 = \kappa^{+\nu}$ and $\kappa^{\lambda} \leq \kappa_0$, then $(\kappa_1,\kappa_0) \chang_{\lambda} (\mu_1,\mu_0)$.
\end{enumerate}
\end{lemma}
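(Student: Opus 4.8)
The plan is to run the standard structure-enrichment argument; a detailed treatment is in \cite[Section 2.2.1]{Foreman2009}, so I only sketch it here.

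Fix a structure $\mathfrak A$ on $\kappa_1$ in a countable language. Since $(\kappa_1,\kappa_0)\chang_{\nu}(\mu_1,\mu_0)$ is asserted for \emph{all} such structures, we may freely expand $\mathfrak A$ by finitely many constants and countably many functions of finite arity, and we may assume it carries Skolem functions; an elementary substructure of the expansion restricts to one of $\mathfrak A$ with the same cardinality data. Let $\mathfrak A^{\ast}$ be $\mathfrak A$ expanded by: the constants $\mu_0,\kappa_0,\nu$ (and, in (2), $\kappa$ and $\lambda$); the map $\gamma\mapsto|\gamma|$ for $\gamma<\kappa_0$; a function coding, for each $\gamma<\kappa_0$, a bijection $h_\gamma\colon|\gamma|\to\gamma$ and its inverse; and the enumeration $i\mapsto\mu_0^{+i}$ (in (1)), resp.\ $i\mapsto\kappa^{+i}$ (in (2)), of the cardinals of $[\mu_0,\kappa_0]$, resp.\ $[\kappa,\kappa_0]$ — a function with domain $\nu+1$, precisely because $\kappa_0=\mu_0^{+\nu}$, resp.\ $\kappa_0=\kappa^{+\nu}$. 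In (2) also adjoin an injection $e\colon{}^{\lambda}\kappa\hookrightarrow\kappa_0$ and its partial inverse; this exists because $\kappa^{\lambda}\le\kappa_0$.

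Now apply $(\kappa_1,\kappa_0)\chang_{\nu}(\mu_1,\mu_0)$ to $\mathfrak A^{\ast}$, obtaining $\mathfrak B\prec\mathfrak A^{\ast}$ with $|\mathfrak B|=\mu_1$, $|\mathfrak B\cap\kappa_0|=\mu_0$, and $\nu\subseteq\mathfrak B$. Because $\nu\subseteq\mathfrak B$ and $\mathfrak A^{\ast}$ carries the enumeration of the intermediate cardinals, every $\mu_0^{+i}$ (in (1)), resp.\ every $\kappa^{+i}$ (in (2)), with $i\le\nu$ belongs to $\mathfrak B$; in particular $\mu_0\in\mathfrak B$, resp.\ $\kappa\in\mathfrak B$. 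The transfer step is then to show $\mu_0\subseteq\mathfrak B$ in (1), resp.\ $\lambda\subseteq\mathfrak B$ in (2): using the bijections $h_\gamma$ one pulls members of $\mathfrak B\cap\kappa_0$ downward through the ladder $\langle\mu_0^{+i}:i\le\nu\rangle$ (all of whose members lie in $\mathfrak B$) to conclude that $\mathfrak B$ contains every ordinal below $\mu_0$ in (1); in (2) the same descent gives every ordinal below $\kappa$, and then $e$ together with $\kappa,\lambda\in\mathfrak B$ lets one recognise each $\eta<\lambda$ as a coordinate of a $\lambda$-sequence from $\kappa$ whose $e$-code, by elementarity, is an element of $\mathfrak B\cap\kappa_0$, whence $\eta\in\mathfrak B$. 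Restricting $\mathfrak B$ back to the language of $\mathfrak A$ now witnesses $(\kappa_1,\kappa_0)\chang_{\mu_0}(\mu_1,\mu_0)$, resp.\ $(\kappa_1,\kappa_0)\chang_{\lambda}(\mu_1,\mu_0)$.

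The delicate point — and the main obstacle — is this transfer step: one must verify that $\mathfrak B$ engulfs \emph{all} ordinals below $\mu_0$ (resp.\ below $\lambda$), not merely a cofinal or otherwise large subset, and the descent has to navigate past limit cardinals and fixed points of the aleph function lying between $\mu_0$ (resp.\ $\kappa$) and $\kappa_0$. This is exactly where the hypotheses enter: $\kappa_0=\mu_0^{+\nu}$ (resp.\ $\kappa_0=\kappa^{+\nu}$) forces the relevant ladder of cardinals to be indexed by $\nu$, which $\mathfrak B$ already contains, while $\kappa^{\lambda}\le\kappa_0$ supplies room inside $\kappa_0$ to code the $\lambda$-sequences used in (2). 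The remaining bookkeeping is routine; see \cite[Section 2.2.1]{Foreman2009}.
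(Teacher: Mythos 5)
The paper does not actually prove this lemma -- it defers to \cite[Section 2.2.1]{Foreman2009} -- so your sketch has to stand on its own, and its central step is wrong. You assert that the ``transfer step'' consists in showing that the original Chang witness $\mathfrak B$ itself satisfies $\mu_0 \subseteq \mathfrak B$ in (1), resp.\ $\lambda \subseteq \mathfrak B$ in (2), by ``pulling members of $\mathfrak B \cap \kappa_0$ downward through the ladder'' via the coded bijections. That cannot work: pulling $\beta \in \mathfrak B \cap \gamma$ back through $h_\gamma$ only shows that $\mathfrak B \cap |\gamma|$ is suitably large; it can never show that $\mathfrak B \cap \mu_0$ is an \emph{initial segment}. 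Already for $\nu = 1$, $\kappa_0 = \mu_0^+$, a witness to $(\kappa_1,\mu_0^+) \chang (\mu_1,\mu_0)$ for the expanded structure typically meets $\mu_0$ in a non-transitive subset of size $\mu_0$, and no amount of elementarity plus coded bijections forces $\mu_0 \subseteq \mathfrak B$; if it did, the lemma would need no proof at all. The part (2) use of $e$ is likewise circular: that $\eta < \lambda$ is a coordinate of some sequence whose $e$-code lies in $\mathfrak B$ does not make $\eta$ definable from parameters in $\mathfrak B$ unless the coordinate index is already in $\mathfrak B$. (Your worry about aleph fixed points between $\mu_0$ and $\kappa_0$ is also a red herring: by hypothesis the cardinals of $[\mu_0,\kappa_0]$ are exactly the $\mu_0^{+i}$, $i \leq \nu$.)

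The actual argument keeps $\mathfrak B$ as it is and passes to the Skolem hull $\mathfrak B' = \text{Hull}^{\mathfrak A^\ast}(\mathfrak B \cup \mu_0)$ (resp.\ $\mathfrak B \cup \lambda$); the entire content of the lemma is the verification that $|\mathfrak B' \cap \kappa_0|$ is still $\mu_0$, which is exactly the step you dismiss as routine bookkeeping. One proves by induction on $i \leq \nu$ that $|\mathfrak B' \cap \mu_0^{+i}| \leq \mu_0$ (in (2), $|\mathfrak B' \cap \kappa^{+i}| \leq \mu_0$). At a successor rung $\theta = \mu_0^{+(i+1)}$, for each Skolem term $f$ and $b \in \mathfrak B^{<\omega}$ the values $f(b,\vec\eta) < \theta$ with $\vec\eta \in \mu_0^{<\omega}$ form a set of size $\leq \mu_0 < \theta$, so they are bounded by an ordinal $\beta_{f,b} < \theta$ definable from $b$ and $\theta$; since $\nu \subseteq \mathfrak B$ puts every rung $\theta$ in $\mathfrak B$ (this, not your descent, is where the subscript hypothesis is used), $\beta_{f,b} \in \mathfrak B \cap \theta$, and then $\mathfrak B' \cap \beta_{f,b} \subseteq h_{\beta_{f,b}}[\mathfrak B' \cap |\beta_{f,b}|]$, so $\mathfrak B' \cap \theta$ is covered by at most $\mu_0$ sets each of size $\leq \mu_0$ by the induction hypothesis; limit rungs are unions of $\leq |\nu| \leq \mu_0$ earlier levels. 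In (2) the hypothesis $\kappa^\lambda \leq \kappa_0$ enters exactly once, at the base rung $\kappa$: fixing in the structure an enumeration $\la f_\alpha : \alpha < \kappa^\lambda \ra$ of the functions from $\lambda^{<\omega}$ to $\kappa$, each map $\vec\eta \mapsto f(b,\vec\eta)$ truncated to $\kappa$ has an index definable from $b$, hence in $\mathfrak B \cap \kappa_0$, giving $|\mathfrak B' \cap \kappa| \leq \mu_0 \cdot \lambda = \mu_0$ (this is the same device as in the paper's proof of Lemma \ref{bottomcard}). Your expansion of the structure is the right one, but the engulfing mechanism you propose for it does not prove the lemma.
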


When the hypotheses of the above lemma cannot be applied, some $\GCH$ at the lower end allows a similar conclusion in a special case.

\begin{lemma}
\label{bottomcard}
Suppose $\mu^{<\nu} = \mu$, and $(\kappa^+,\kappa) \chang (\mu^+,\mu)$.  Then $(\kappa^+,\kappa) \chang_\nu (\mu^+,\mu)$.
\end{lemma}

\begin{proof}
If $\kappa^\nu= \kappa$, then the conclusion follows from (2) of Lemma \ref{stepup}.  Otherwise, let $\mathfrak A$ be a structure on $\kappa^+$ which is isomorphic to a transitive elementary substructure of $(H_{\kappa^{++}}, \in, \lhd,\mu,\nu)$, where $\lhd$ is a well-order of $H_{\kappa^{++}}$.  It is easy to see that the conclusion of Proposition \ref{model arithmetic} applies to $\mathfrak A$ with respect to the cardinals $\kappa,\nu,\kappa^+$.  If $\mathfrak B \prec \mathfrak A$ witnesses Chang's Conjecture, then $|\mathfrak B \cap \kappa|^{|\mathfrak B \cap \nu|} = \mu^{|\mathfrak B \cap \nu|} \geq |\mathfrak B \cap \kappa^+| = \mu^+$.  Thus $|\mathfrak B \cap \nu| = \nu$.

Let $\delta \in \mathfrak B \cap \nu$.  Corollary \ref{chang arithmetic} implies that $\kappa^\delta = \kappa$.  Let $\la f_\alpha : \alpha < \kappa \ra \in \mathfrak B$ list all functions from $\delta$ to $\kappa$.  Let $\mathfrak B' = \text{Hull}^{\mathfrak A}(\mathfrak B \cup \delta).$  If $\beta \in \kappa \cap \mathfrak B'$, then there is function $f \in \, ^{\delta}\kappa \cap \mathfrak B$ and $\gamma < \delta$ such that $\beta = f(\gamma)$.  Thus $\mathfrak B' \cap \kappa = \{ f_\alpha(\gamma) : \alpha \in \mathfrak B \cap \kappa$ and $\gamma < \delta \}$, which has size $\mu$.  Now let $\mathfrak C = \text{Hull}^{\mathfrak A}(\mathfrak B \cup \nu).$  Since $\mathfrak B$ is cofinal in $\nu$, $\mathfrak C = \bigcup \{\text{Hull}^{\mathfrak A}(\mathfrak B \cup \delta) : \delta \in \mathfrak B \cap \nu \}$, so $|\mathfrak C \cap \kappa| = \mu$.
\end{proof}

Versions of Chang's Conjecture involving singular cardinals have a strong influence on the combinatorics in their neighborhood, even without cardinal arithmetic assumptions.  Recall that if $\kappa$ is singular, a \emph{scale for $\kappa$} is a collection of functions $\la f_\alpha : \alpha < \kappa^+ \ra$ contained in some product $\prod_{i < \cf(\kappa)} \kappa_i$, where $\la \kappa_i : i < \cf(\kappa) \ra$ is an increasing and cofinal sequence of regular cardinals below $\kappa$, such that the functions $f_\alpha$ are increasing and cofinal in the partial order of the product where we put $f < g$ when $| \{ i : f(i) \geq g(i) \} | < \cf(\kappa)$.  It is easy to construct scales under the assumption $2^\kappa = \kappa^+$, but Shelah proved in $\ZFC$ that scales exist for all singular cardinals (see \cite{AbrahamMagidorHandbook}).

A scale $\la f_\alpha : \alpha <\kappa^+ \ra$ is \emph{good at $\alpha$} when there is a sequence $\vec{g} = \la g_i : i < \cf(\alpha) \ra$ and $j_\star < \cf(\kappa)$, such that for all $j \geq j_\star$, $\langle g_i(j) \mid i < \cf(\alpha)\rangle$ is increasing and $\vec{g}$ and $\la f_\beta : \beta < \alpha \ra$ are interleaved (i.e., cofinal in each other).  A scale is \emph{bad at $\alpha$} when it is not good at $\alpha$.  A scale is \emph{better at $\alpha$} if there is a club $C \subseteq \alpha$ such that for all $\beta \in C$ there is $j < \cf(\kappa)$ such that $f_\gamma(i) < f_\beta(i)$ for $i \geq j$ and $\gamma\in C \cap \beta$.  Note that if $\cf(\alpha) > \cf(\kappa)$, then being better at $\alpha$ implies being good at $\alpha$.  A scale is simply called \emph{good} (or \emph{better}) if it is good (or better) at every $\alpha$ such that  $\cf(\alpha) > \cf(\kappa)$.  The key connection with Chang's Conjecture is the following (see \cite{ForemanMagidor97} or \cite{Shelah91}): 

\begin{lemma}
\label{cc no good scale}
If $\kappa$ is singular and $(\kappa^+,\kappa) \chang_{\cf(\kappa)} (\mu^+,\mu)$, then there is no good scale for $\kappa$.  Moreover, every scale $\la f_\alpha : \alpha < \kappa^+ \ra$ for $\kappa$ is bad at stationarily many $\alpha$ of cofinality $\mu^+$.
\end{lemma}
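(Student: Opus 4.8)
The plan is to reduce at once to the ``moreover'' clause: a good scale for $\kappa$ is by definition good at every $\alpha<\kappa^+$ with $\cf(\alpha)>\cf(\kappa)$, and since $\mu\geq\cf(\kappa)$ every $\alpha$ of cofinality $\mu^+$ satisfies $\cf(\alpha)=\mu^+>\cf(\kappa)$; so a good scale would be bad at only nonstationarily many $\alpha$ of cofinality $\mu^+$, contradicting the second assertion. It therefore suffices to fix an arbitrary scale $\vec f=\la f_\alpha:\alpha<\kappa^+\ra$ in a product $\prod_{i<\cf(\kappa)}\kappa_i$ and an arbitrary club $C\subseteq\kappa^+$, and to produce $\gamma\in C$ with $\cf(\gamma)=\mu^+$ at which $\vec f$ is bad.

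Next I would invoke Chang's Conjecture. Let $\mathfrak A$ be a Skolemized structure on $\kappa^+$ isomorphic to a transitive elementary substructure of $(H_{\kappa^{++}},\in,\lhd,\vec f,\la\kappa_i\ra,C,\kappa,\mu)$, where $\lhd$ well-orders $H_{\kappa^{++}}$. (Note that the demand ``$\cf(\kappa)\subseteq\mathfrak B$'' in $\chang_{\cf(\kappa)}$ is consistent with ``$|\mathfrak B\cap\kappa|=\mu$'' precisely because $\mu\geq\cf(\kappa)$, which, since $\kappa$ is singular, is automatic from the hypothesis.) Applying $(\kappa^+,\kappa)\chang_{\cf(\kappa)}(\mu^+,\mu)$ --- to $\mathfrak A$ and, as needed, to expansions of it by further predicates --- one obtains $\mathfrak B\prec\mathfrak A$ with $|\mathfrak B|=\mu^+$, $|\mathfrak B\cap\kappa|=\mu$ and $\cf(\kappa)\subseteq\mathfrak B$; by the standard technology for analyzing elementary submodels against a scale (see \cite{AbrahamMagidorHandbook,ForemanMagidor97}) one may in addition take $\mathfrak B$ approachable enough, and written as a suitable continuous increasing union of smaller elementary submodels along which $\gamma:=\sup(\mathfrak B\cap\kappa^+)$ is reached strictly increasingly, that $\cf(\gamma)=\mu^+$; and $\gamma\in C$ since $C$ is a club belonging to $\mathfrak B$.

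The finish is the characteristic-function argument. Put $\chi(i):=\sup(\mathfrak B\cap\kappa_i)$. For all large $i$, $\mu<\kappa_i=\cf(\kappa_i)$, so $\chi(i)<\kappa_i$ and $\cf(\chi(i))\leq|\mathfrak B\cap\kappa_i|\leq\mu<\mu^+$; and the approachability of $\mathfrak B$ ensures that $\chi$ is an \emph{exact} upper bound of $\la f_\alpha:\alpha<\gamma\ra$ modulo the ideal of subsets of $\cf(\kappa)$ of size $<\cf(\kappa)$. Now suppose $\vec f$ were good at $\gamma$. Since $\cf(\gamma)=\mu^+>\cf(\kappa)$, unwinding the definition of goodness --- take the witnessing sequence $\vec g=\la g_j:j<\mu^+\ra$, let $h$ be its coordinatewise supremum, and use $\cf(\kappa)<\mu^+$ to absorb the coordinatewise choices when checking exactness --- yields an exact upper bound $h$ of $\la f_\alpha:\alpha<\gamma\ra$ with $\cf(h(i))=\mu^+$ for all large $i$. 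But exact upper bounds are unique modulo the ideal, so $h=^*\chi$, whence $\mu^+=\cf(h(i))=\cf(\chi(i))\leq\mu$ for large $i$ --- a contradiction. So $\vec f$ is bad at $\gamma$.

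The main obstacle is the bookkeeping compressed into the second paragraph. A single elementary submodel cannot simultaneously have $|\mathfrak B\cap\kappa|=\mu$ (so that $\chi$ has small fibers) and the full internal approachability of length $\mu^+$ one would naively want (which would force $\mu^+\subseteq\mathfrak B$, hence $|\mathfrak B\cap\kappa|\geq\mu^+$); so arranging that $\chi$ is genuinely an exact --- not merely an --- upper bound while keeping $|\mathfrak B\cap\kappa|=\mu$ and $\cf(\gamma)=\mu^+$ is where the real work lies. This is carried out in \cite{ForemanMagidor97} and \cite{Shelah91} --- for instance by threading a chain of Chang's-Conjecture submodels rather than using a single one, or by passing to a transitive collapse. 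The remaining ingredients --- the reduction to ``moreover'', the computation $\cf(\chi(i))\leq\mu$, the extraction of the large-fibered exact upper bound from goodness, and the uniqueness of exact upper bounds modulo the ideal --- are routine.
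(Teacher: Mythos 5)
Your overall frame (Chang submodel, characteristic function, contradiction with goodness at $\gamma=\sup(\mathfrak B\cap\kappa^+)$) is the right one --- indeed the paper gives no proof and cites \cite{ForemanMagidor97,Shelah91} for exactly this kind of argument --- but your proof has a genuine gap at its central step, and you have in effect deferred precisely that step to the references. The whole contradiction rests on the claim that $\chi(i)=\sup(\mathfrak B\cap\kappa_i)$ is an \emph{exact} upper bound of $\la f_\alpha:\alpha<\gamma\ra$. Exactness cannot be dropped: from goodness you get an exact upper bound $h$ with $\cf(h(i))=\mu^+$ eventually, and one can show $h\le^*\chi$, but an upper bound $\chi$ with $\cf(\chi(i))\le\mu$ is perfectly compatible with this unless $\chi$ itself is exact, so your uniqueness step needs exactness of $\chi$. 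As you yourself observe, the internal-approachability route to exactness is incompatible with $|\mathfrak B\cap\kappa|=\mu$; worse, the claim is not just unproven but in general not arrangeable by choosing $\mathfrak B$ more carefully: at the bad points $\gamma$ the lemma is supposed to produce, the sequence $\la f_\alpha:\alpha<\gamma\ra$ may have \emph{no} exact upper bound at all, so no Chang submodel with supremum $\gamma$ can have an exact characteristic function. Thus ``show $\chi$ is an exact upper bound with small cofinalities'' aims at a statement stronger than the lemma and is not justified by the hypotheses. (A side remark: the appeal to approachability for $\cf(\gamma)=\mu^+$ is also unnecessary --- put in the structure a function coding, for each $\beta<\kappa^+$, a surjection of $\kappa$ onto $\beta$; then $|\mathfrak B\cap\beta|\le|\mathfrak B\cap\kappa|=\mu$ for every $\beta\in\mathfrak B\cap\kappa^+$, so $\ot(\mathfrak B\cap\kappa^+)=\mu^+$ and $\cf(\gamma)=\mu^+$ outright.)

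The cited argument avoids exact upper bounds entirely, and this is the missing idea. Suppose $\gamma$ were good, witnessed by $\vec g=\la g_i:i<\mu^+\ra$ and $j_\star$. Using the interleaving and the fact that $\mathfrak B\cap\kappa^+$ is cofinal in $\gamma$, choose recursively $i_\xi<\mu^+$ increasing and $\beta_\xi\in\mathfrak B\cap\kappa^+$ increasing and cofinal in $\gamma$ with $g_{i_\xi}<^* f_{\beta_\xi}<^* g_{i_{\xi+1}}$, and let $j_\xi\ge j_\star$ witness both dominations pointwise from $j_\xi$ on. Since $\mu^+>\cf(\kappa)$ (this is where $\mu\ge\cf(\kappa)$ enters), fix an unbounded $E\subseteq\mu^+$ and a single $j^*\ge j_\star$ with $j_\xi=j^*$ for $\xi\in E$. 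For $\xi<\xi'$ in $E$ and $j\ge j^*$ one then has $f_{\beta_\xi}(j)<g_{i_{\xi+1}}(j)\le g_{i_{\xi'}}(j)<f_{\beta_{\xi'}}(j)$, the middle inequality using the \emph{uniform} coordinatewise monotonicity of $\vec g$ above $j_\star$ --- this sandwiching is exactly what repairs the pointwise-versus-eventual mismatch that exactness was meant to paper over. Now fix one $j\ge j^*$: since $\cf(\kappa)\subseteq\mathfrak B$ (the subscript in $\chang_{\cf(\kappa)}$) and the scale is in the structure, each $f_{\beta_\xi}(j)$ lies in $\mathfrak B\cap\kappa$, and the map $\xi\mapsto f_{\beta_\xi}(j)$ is injective on $E$, contradicting $|\mathfrak B\cap\kappa|=\mu$. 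Hence $\gamma$ is bad; since the club $C$ is in $\mathfrak B$, $\gamma\in C$ and $\cf(\gamma)=\mu^+$, giving stationarity, and your reduction of the first clause to the second is fine. So the shell of your proposal is salvageable, but the step you flagged as ``where the real work lies'' is the proof, and the exact-upper-bound route you sketch for it does not go through.
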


We now show that the full Global Chang's Conjecture is inconsistent with $\ZFC$.

\begin{lemma}
Suppose $\kappa$ is regular, $\mu < \kappa$ is singular, and $(\kappa^{+},\kappa) \chang (\mu^+,\mu)$.  Then $\mu$ carries a better scale. Moreover, if $\cf \mu = \omega$ then $\square_\mu^*$ holds.
\end{lemma}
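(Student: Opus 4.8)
The plan is to manufacture, from the Chang submodel the hypothesis provides, a coherent family of internally approachable elementary submodels indexed by a club of $\mu^+$, and to read the better scale (and, when $\cf(\mu)=\omega$, the weak square sequence) off that family; I read the last clause as asserting $\square_\mu^\star$ in the case $\cf(\mu)=\omega$. Fix $\vec\mu=\langle\mu_i:i<\cf(\mu)\rangle$ an increasing sequence of regular cardinals cofinal in $\mu$, a scale $\vec f=\langle f_\alpha:\alpha<\mu^+\rangle\in\prod_{i<\cf(\mu)}\mu_i$ for $\mu$ (Shelah's theorem, see \cite{AbrahamMagidorHandbook}), a well-ordering $\lhd$ of $H_\theta$ for large regular $\theta$, and put $\mathfrak A=(H_\theta,\in,\lhd,\mu,\kappa,\vec\mu,\vec f)$. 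By the standard theory of the approachability ideal (see \cite{ForemanMagidor97,Shelah91}), to show $\mu$ carries a better scale it is enough to produce a club $E\subseteq\mu^+$ together with, for every $\alpha\in E$ of cofinality $>\cf(\mu)$, a continuous $\subseteq$-increasing chain $\langle N^\alpha_\xi:\xi<\cf(\alpha)\rangle$ of elementary substructures of $\mathfrak A$ of size $<\mu$, each containing $\cf(\mu)$ and $\vec\mu$, with $\langle N^\alpha_\eta:\eta\le\xi\rangle\in N^\alpha_{\xi+1}$ and with $\sup(N^\alpha_\xi\cap\mu^+)$ strictly increasing to $\alpha$. Indeed, for such $\alpha$ the characteristic functions $\chi_{N^\alpha_\xi}(i)=\sup(N^\alpha_\xi\cap\mu_i)$ lie in $\prod_i\mu_i$, are pointwise strictly increasing above the uniform level where $\mu_i$ exceeds $\sup_\xi|N^\alpha_\xi|$ (using $\cf(\mu)\subseteq N^\alpha_\xi$, $\vec\mu\in N^\alpha_\xi$ and $\langle N^\alpha_\eta:\eta\le\xi\rangle\in N^\alpha_{\xi+1}$), and interleave with $\langle f_\beta:\beta<\alpha\rangle$ (because each $N^\alpha_\xi$ is \emph{captured}, i.e.\ $\sup(N^\alpha_\xi\cap\mu^+)<\alpha$ while the union reaches $\alpha$); a better scale for $\mu$, with $E\cap\alpha$ as the witnessing club at each $\alpha\in E$, then follows by the usual patching argument. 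When in addition $\cf(\mu)=\omega$ one arranges that the traces $\{\sup(N^\alpha_\xi\cap\mu^+):\xi<\cf(\alpha)\}$ cohere as $\alpha$ ranges over $E$, and a $\square_\mu^\star$-sequence is read off these traces in the standard way — the countable cofinality is what keeps each guessing family of size $\le\mu$.

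The content is the construction of the chains, and this is where $(\kappa^{+},\kappa)\chang(\mu^+,\mu)$ and the regularity of $\kappa$ enter. Chang's Conjecture gives $\mathfrak B\prec\mathfrak A$ with $|\mathfrak B|=\mu^+$, $|\mathfrak B\cap\kappa|=\mu$, and (after routine adjustments) $|\mathfrak B\cap\kappa^+|=\mu^+$; since $\mu^+\le\kappa$ we get $|\mathfrak B\cap\mu^+|=\mu$, and since $\kappa$ is regular, $\sup(\mathfrak B\cap\kappa)<\kappa$. Filter $\mathfrak B$ into a continuous $\subseteq$-increasing chain $\langle\mathfrak B_\xi:\xi<\mu^+\rangle$ of size-$\mu$ submodels with $\langle\mathfrak B_\eta:\eta\le\xi\rangle\in\mathfrak B_{\xi+1}$. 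Because $|\mathfrak B\cap\mu^+|=\mu<\mu^+$, the sets $\mathfrak B_\xi\cap\mu^+$ stabilise — equal to $\mathfrak B\cap\mu^+$ for all $\xi$ past some $\xi^\star$ — while $\mathfrak B_\xi\cap\kappa^+$ keeps growing; this stabilisation is precisely a capture of the ordinal $\sup(\mathfrak B\cap\mu^+)$. To turn one capture into captures at a club of points of every relevant cofinality, pass to the transitive collapse $\pi\colon\mathfrak B\cong\bar{\mathfrak B}$: there $\pi(\mu^+)$ has size $\mu$ while $\pi(\kappa)$ is still regular and sits above $\pi(\mu^+)$, so $\pi$ computes cofinalities correctly up past $\pi(\mu^+)$, $\pi(\vec f)$ is genuinely a scale for $\bar\mu=\pi(\mu)$, and the stabilisation argument can be iterated inside $\bar{\mathfrak B}$ to produce captured models whose $\mu^+$-suprema form a club. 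Transferring back along $\pi$, and running the argument cofinally in $\mu^+$, produces $E$ together with the chains $\langle N^\alpha_\xi\rangle$ and (when $\cf(\mu)=\omega$) the coherence of their traces.

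The main obstacle is exactly this upgrade: converting the one-shot stabilisation of a single Chang submodel into coherent, internally approachable chains capturing a club of ordinals of $\mu^+$ of all cofinalities above $\cf(\mu)$, together with the trace coherence needed for $\square_\mu^\star$. Besides the conceptual step of iterating inside the collapse — which is where the regularity of $\kappa$ does its work, by ensuring $\pi(\kappa)$ remains regular above $\pi(\mu^+)$ so that cofinalities, and hence the scale structure, survive past $\pi(\mu^+)$ — there is delicate bookkeeping: keeping all models of size $<\mu$ so that characteristic functions land in $\prod_i\mu_i$; securing the uniform threshold for each $\alpha$, which is why one must use the per-$\alpha$ club $E\cap\alpha$ rather than a single club of order type $\mu^+$ (points of the latter of cofinality close to $\mu$ would require models of size $\mu$); and verifying that the patched object is a genuine better scale.
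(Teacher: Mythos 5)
Your reading of the final clause as $\square_\mu^\star$ in the case $\cf(\mu)=\omega$ matches what is actually proved, but the engine of your argument is missing, and the one concrete claim you lean on is false. The transitive collapse sends $\kappa$ to $\ot(\mathfrak B\cap\kappa)$, an ordinal of cardinality $\mu$; since $\mu$ is singular this ordinal is never regular in $V$, and ``$\pi$ computes cofinalities correctly past $\pi(\mu^+)$'' is exactly what fails in general: the true cofinality of $\pi(\kappa)$ is $\cf(\sup(\mathfrak B\cap\kappa))$, and identifying it is a genuine theorem, not a remark. The paper proves (following Foreman--Magidor) that $\cf(\sup(\mathfrak B\cap\kappa))=\cf(\mu)$ using Shelah's strongly almost disjoint sequence $\mathrm{ADS}_\kappa$, and this is where the regularity of $\kappa$ actually does its work; nothing in your proposal substitutes for it. The filtration step is also unjustified: Chang's Conjecture hands you an arbitrary $\mathfrak B$ of size $\mu^+$, and there is no reason the initial segments of a filtration of $\mathfrak B$ should be elements of $\mathfrak B$, so you cannot demand $\la\mathfrak B_\eta:\eta\le\xi\ra\in\mathfrak B_{\xi+1}$ while keeping $\mathfrak B_\xi\subseteq\mathfrak B$. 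Even granting it, the ``stabilisation'' only captures the single ordinal $\sup(\mathfrak B\cap\mu^+)$ --- something any size-$\mu$ elementary submodel gives for free --- and the instruction to ``iterate the stabilisation inside the collapse'' never produces internally approachable chains converging to club-many $\alpha<\mu^+$ of every cofinality in $(\cf(\mu),\mu)$, let alone ones whose clubs cohere across different $\alpha$. Coherence is not optional bookkeeping here: goodness at all points of cofinality $>\cf(\mu)$ is strictly weaker than carrying a better scale, and both the betterness verification and the $\square_\mu^\star$ completion need a genuinely coherent system.

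The missing idea is to reflect square-like ZFC theorems holding at the regular cardinal $\kappa$ through the collapse of the Chang submodel, rather than to manufacture the structure from the submodel alone. By Shelah, $\kappa^+$ carries a partial weak square $\la\mathcal C_\alpha:\alpha<\kappa^+\ra$ missing only cofinality $\kappa$; putting it into $\mathfrak B$ and collapsing yields a coherent sequence $\la\mathcal D_\alpha:\alpha<\mu^+\ra$ with each $\mathcal D_\alpha$ of size $\le\mu$, nonempty at every $\alpha$ whose preimage has cofinality $<\kappa$, and the $\mathrm{ADS}_\kappa$ computation shows the avoided points have cofinality exactly $\cf(\mu)$. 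One must then shrink the clubs to size $<\mu$ (by intersecting with a ``short square'' of length $<\mu^+$, a separate induction), and only then build the better scale by recursion along the coherent system, choosing each $g_\alpha$ to dominate the pointwise suprema along the clubs in $\mathcal E_\alpha$; when $\cf(\mu)=\omega$ the same sequence, completed at the (countable-cofinality) missing points, gives $\square_\mu^\star$. None of these steps --- the cofinality claim via $\mathrm{ADS}_\kappa$, the partial weak square at the regular $\kappa$, the shrinking, the recursion along the coherent system --- appears in your outline, and the stabilisation trick does not replace them.
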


\begin{proof}
Let us start with a general observation, following \cite[Theorem 2.15]{ForemanMagidor1995}. 
\begin{claim}
Let $\mu < \kappa = \cf(\kappa)$ be cardinals. Let $\theta$ be a regular cardinal above $\kappa^+$. If $H$ is the transitive collapse of some elementary substructure of $H_\theta$ of size $\kappa^+$ containing $\kappa^+$, and $M \prec H$ is such that $|M \cap \kappa^+| = \mu^+$ and $|M \cap \kappa| = \mu$, then $\cf(\sup(M \cap \kappa)) = \cf(\mu)$.  
\end{claim}
\begin{proof}
Fix in such an $H$ a sequence $\la x_\alpha : \alpha < \kappa^+ \ra$ of ``strongly almost disjoint" unbounded subsets of $\kappa$.  That is, for every $\alpha < \kappa^+$, there is a sequence $\la \gamma^\alpha_\beta : \beta < \alpha \ra \in H$ of ordinals below $\kappa$ such that $\la x_\beta \setminus \gamma^\alpha_\beta : \beta < \alpha \ra$ is pairwise disjoint. This principle, due to Shelah, is called $\mathrm{ADS}_\kappa$ and it holds for $\kappa$ regular (see \cite{CummingsForemanMagidor2001} and \cite{ShelahProper}).

Let $M \prec H$ be as above.  Let $f : \mu \to M \cap \kappa$ be a bijection.  If $\cf(\sup(M \cap \kappa)) \not= \cf(\mu)$, then for each $\alpha < M \cap \kappa^+$ there is $\delta_\alpha < \mu$ such that $f[\delta_\alpha] \cap x_\alpha$ is cofinal in $M \cap \kappa$.  Since $|M \cap \kappa^{+}| = \mu^+$, there is a set $Y \subseteq M \cap \kappa^+$ of size $\mu^+$ and a fixed $\delta <\mu$ such that $\delta_\alpha = \delta$ for all $\alpha \in Y$. Let $\zeta \in M \cap \kappa^{+}$ be large enough so that $|Y \cap \zeta| = \mu$. Note that $\langle \gamma^\zeta_\beta \mid \beta < \zeta\rangle \in M$ and thus for every $\beta \in M \cap \zeta$, $\gamma^\zeta_\beta \in M \cap \kappa$.

For $\beta \in Y \cap \zeta$, let $y_\beta = f[\delta] \cap x_\beta \setminus \gamma^\zeta_\beta$.  Then $\{ f^{-1}[y_\beta] : \beta \in Y \cap \alpha \}$ is a collection of $\mu$-many pairwise disjoint subsets of $\delta$, which is impossible.
\end{proof}

Let us return to the proof of the lemma.

By a theorem of Shelah \cite{Shelah91},
$\kappa$ carries a ``partial weak square'', a weak square sequence that misses only cofinality $\kappa$.  That is, there is a sequence $\la \mathcal C_\alpha : \alpha < \kappa^+ \ra$ such that whenever $\omega \leq \cf(\alpha) < \kappa$, then $\mathcal C_\alpha$ is a nonempty collection of size $\leq \kappa$ such that each $C \in \mathcal C_\alpha$ is a club subset of $\alpha$ of size $<\kappa$, and if $C \in \mathcal C_\alpha$ and $\beta \in \lim C$, then $C \cap \beta \in \mathcal C_\beta$.

Let $M \prec H$ be as above, with $\vec{\mathcal C} \in M$ a partial weak square at $\kappa$.  Let $\pi : M \to N$ be the transitive collapse.  Let $\vec{\mathcal D} = \pi(\vec{\mathcal C})$.  Since $\ot(M \cap \kappa^+) = \mu^+$ and $| M \cap \mathcal C_\alpha | \leq \mu$ for each $\alpha \in M \cap \kappa^+$, $\vec{\mathcal D}$ is a sequence $\la \mathcal D_\alpha : \alpha < \mu^+ \ra$, such that each $\mathcal D_\alpha$ has size $\leq \mu$, if $D \in \mathcal D_\alpha$ and $\beta \in \lim D$, then $D \cap \beta \in \mathcal D_\beta$, and $\mathcal D_\alpha$ is nonempty whenever $\alpha$ is a limit ordinal such that $\cf(\pi^{-1}(\alpha)) \not= \kappa$.  If $\alpha$ is such that $\cf(\pi^{-1}(\alpha)) = \kappa$, then there is an increasing cofinal map $f : \kappa \to \pi^{-1}(\alpha)$ in $M$, which implies that $\cf(\alpha) = \cf(\mu)$.  Therefore, $\mathcal D_\alpha$ is nonempty whenever $\cf(\alpha) \not= \cf(\mu)$.  Furthermore, if $D \in \mathcal D_\alpha$, then $\ot(D) < \pi(\kappa)$.

Next, we modify $\vec{\mathcal D}$ to a sequence $\vec{\mathcal E}$ with the same properties except that $|C| < \mu$ whenever $C \in \mathcal E_\alpha$ and $\alpha < \mu^+$.  It is easy to show by induction that for each $\eta < \mu^+$, there is a ``short square'' of length $\eta$---a coherent sequence of clubs $\la E_\alpha : \alpha < \eta \ra$ such that $|E_\alpha| < \mu$ for each $\alpha < \eta$.  Fix such a sequence $\la E_\alpha : \alpha < \pi(\kappa) \ra$.  For each $\alpha < \mu^+$, let $\mathcal E_\alpha = \{ \{ \beta \in D : \ot(D \cap \beta) \in E_{\ot(D)} \} : D \in \mathcal D_\alpha \}$.  Clearly each element of each $\mathcal E_\alpha$ has size $<\mu$.  If $C \in \mathcal E_\alpha$ and $\beta \in \lim C$, then there is $D \in \mathcal D_\alpha$ such that $\beta \in \lim D$ and $C = \{ \beta \in D : \ot(D \cap \beta) \in E_{\ot(D)} \}$.  Thus $D \cap \beta \in \mathcal D_\beta$ and $\ot(D \cap \beta) \in \lim E_{\ot(D)}$, so $C \cap \beta \in \mathcal E_\beta$.

Note that $\vec{\mathcal{E}}$ is a partial weak square, avoiding only ordinals of cofinality $\cf\mu$. Thus if $\cf \mu = \omega$, one can easily obtain a weak square sequence by completing the missing points in $\vec{\mathcal{E}}$.
 
Fix a scale for $\mu$, $\la f_\alpha : \alpha < \mu^+ \ra \subseteq \prod_{i < \cf(\mu)} \mu_i$.  Let us inductively construct a better scale $\la g_\alpha : \alpha < \mu^+ \ra$ as follows.  Let $g_0 = f_0$.  If $\mathcal E_\alpha$ is empty, let $g_\alpha = f_\gamma$, where $\gamma \geq \alpha$ and $f_\gamma$ eventually dominates $g_\beta$ for each $\beta < \alpha$.  If $\mathcal E_\alpha$ is nonempty, first, for all $C \in \mathcal E_\alpha$, define
 $$g_C(i) =  
 \begin{cases}
  \sup \{ g_\beta(i) + 1 : \beta \in C \} & \text{ if } \mu_i > |C|, \\
  0 & \text{ otherwise.}
 \end{cases}
$$
Then let $g_\alpha = f_\gamma$, where $\gamma \geq \alpha$ and $f_\gamma$ eventually dominates $g_\beta$ for each $\beta < \alpha$ and $g_C$ for each $C \in \mathcal E_\alpha$.

Clearly $\la g_\alpha : \alpha < \mu^+ \ra$ is a scale.  To check betterness, if $\cf(\alpha) > \cf(\mu)$, let $C \in \mathcal E_\alpha$.  If $\beta \in \lim C$, then 
$C \cap \beta \in \mathcal E_\beta$.  There is $i < \cf(\mu)$ such that $g_{C \cap \beta}(j) > g_\gamma(j)$ for $i < j < \cf(\mu)$ and $\gamma \in C \cap \beta$.  Thus if $C'$ is the set of limit points of some $C \in \mathcal E_\alpha$, then for all $\beta \in C'$ there is $i < \cf(\mu)$ such that $g_\beta(j) > g_\gamma(j)$ for $i < j < \cf(\mu)$ and $\gamma \in C' \cap \beta$.
\end{proof}

Combining the above with Lemmas \ref{bottomcard} and \ref{cc no good scale}, we have:

\begin{theorem}\label{theorem: gcc is impossible}
Suppose $\kappa$ is singular, $\lambda > \kappa$ is regular, $(\lambda^{+}, \lambda) \chang (\kappa^{+}, \kappa)$, and $\cf(\kappa) \leq \mu < \kappa$.  Then $(\kappa^{+}, \kappa) \not\chang_{\cf(\kappa)} (\mu^{+}, \mu)$.  Thus if $\mu^{<\cf(\kappa)} = \mu$, then $(\kappa^{+}, \kappa) \not\chang (\mu^{+}, \mu)$.
\end{theorem}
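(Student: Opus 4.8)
The plan is a short argument by contradiction, confronting two consequences of the hypotheses, both concerning scales for the singular cardinal $\kappa$.  Suppose toward a contradiction that $(\kappa^{+},\kappa)\chang_{\cf(\kappa)}(\mu^{+},\mu)$.  Since $\cf(\kappa)\le\mu$, this is exactly the hypothesis of Lemma~\ref{cc no good scale}, which therefore tells us that \emph{$\kappa$ carries no good scale}.  The remainder of the proof produces a good scale on $\kappa$ out of the other hypothesis, yielding the contradiction.

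To produce that scale I would apply the lemma immediately preceding the theorem---the one that, for a regular cardinal above a singular one, turns a Chang's Conjecture into a better scale on the singular---with $\lambda$ in the role of its regular cardinal and $\kappa$ in the role of its singular cardinal.  Its hypotheses hold here: $\lambda$ is regular, $\kappa<\lambda$ is singular, and $(\lambda^{+},\lambda)\chang(\kappa^{+},\kappa)$ is our hypothesis.  We conclude that $\kappa$ carries a \emph{better} scale; we use only this, not the square clause, which is in any case vacuous since $\cf(\lambda)>\omega$.  Now recall, from the paragraph preceding Lemma~\ref{cc no good scale}, that being better at $\alpha$ implies being good at $\alpha$ whenever $\cf(\alpha)>\cf(\kappa)$, and that a scale is \emph{good} (respectively, \emph{better}) precisely when it is good (respectively, better) at every $\alpha$ with $\cf(\alpha)>\cf(\kappa)$; hence a better scale for $\kappa$ is in particular a good scale for $\kappa$.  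This contradicts the first paragraph, so $(\kappa^{+},\kappa)\not\chang_{\cf(\kappa)}(\mu^{+},\mu)$, which is the first assertion.

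For the concluding ``Thus'', suppose $\mu^{<\cf(\kappa)}=\mu$ and, toward a contradiction, $(\kappa^{+},\kappa)\chang(\mu^{+},\mu)$.  Lemma~\ref{bottomcard} applied with $\nu=\cf(\kappa)$---legitimate precisely because $\mu^{<\cf(\kappa)}=\mu$---then upgrades this to $(\kappa^{+},\kappa)\chang_{\cf(\kappa)}(\mu^{+},\mu)$, contradicting the previous paragraph; hence $(\kappa^{+},\kappa)\not\chang(\mu^{+},\mu)$.  I do not anticipate a genuine obstacle in this deduction: the theorem is an assembly of three earlier results (the lemma preceding it, Lemma~\ref{cc no good scale}, and Lemma~\ref{bottomcard}), and the only points requiring care are the bookkeeping of which cardinal plays which role when the preceding lemma is invoked, the elementary observation that ``better'' scales are ``good'', and checking that the arithmetic side condition $\mu^{<\cf(\kappa)}=\mu$ is exactly the instance of Lemma~\ref{bottomcard} that is at hand.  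The mathematical substance lives in those earlier lemmas---above all in the construction, inside the preceding lemma, of a better scale on $\kappa$ out of the partial weak square and $\mathrm{ADS}$ at $\lambda$.
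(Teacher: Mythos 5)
Your proposal is correct and follows essentially the same route as the paper, which proves the theorem precisely by combining the preceding lemma (a better, hence good, scale for $\kappa$ from $(\lambda^+,\lambda)\chang(\kappa^+,\kappa)$ with $\lambda$ regular) with Lemma~\ref{cc no good scale} and, for the final clause, Lemma~\ref{bottomcard} with $\nu=\cf(\kappa)$. Your reading of the hypothesis as $(\lambda^{+},\lambda)\chang(\kappa^{+},\kappa)$ (rather than the statement's apparent typo $(\lambda^{+},\lambda^{+})$) and your bookkeeping of which cardinal plays which role match the intended argument.
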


\begin{corollary}
$[\aleph_0,\aleph_\omega]$ is the maximal initial interval of cardinals on which the Global Chang's Conjecture can hold.
\end{corollary}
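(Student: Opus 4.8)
The plan is to split the corollary into a positive half and a negative half. The positive half asserts the consistency, modulo large cardinals, of the statement $\mathrm{GCC}_\omega$: ``$(\kappa^{+},\kappa)\chang(\mu^{+},\mu)$ for every pair of infinite cardinals $\mu<\kappa\le\aleph_\omega$ with $\cf(\mu)\le\cf(\kappa)$'' (here the successors $\kappa^{+}$, $\mu^{+}$ occurring in a Chang instance need not themselves lie in the interval). The negative half asserts that in $\ZFC$ the bound $\aleph_\omega$ cannot be improved: the analogue $\mathrm{GCC}_\beta$ obtained by replacing $\aleph_\omega$ with $\aleph_\beta$ for any $\beta>\omega$ is false. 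I would derive the negative half straight from Theorem~\ref{theorem: gcc is impossible}, and take the positive half from prior constructions.

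For the negative half I would first reduce to $\beta=\omega+1$: since $\beta>\omega$ gives $\aleph_{\omega+1}\le\aleph_\beta$, every instance demanded by $\mathrm{GCC}_{\omega+1}$ is also demanded by $\mathrm{GCC}_\beta$, so it suffices to contradict $\mathrm{GCC}_{\omega+1}$. Assuming $\mathrm{GCC}_{\omega+1}$, I would extract two of its instances: with $\kappa=\aleph_{\omega+1}$ (regular) and $\mu=\aleph_\omega$ --- legitimate since $\cf(\aleph_\omega)=\omega\le\aleph_{\omega+1}=\cf(\aleph_{\omega+1})$ --- it gives $(\aleph_{\omega+2},\aleph_{\omega+1})\chang(\aleph_{\omega+1},\aleph_\omega)$; with $\kappa=\aleph_\omega$ and $\mu=\aleph_0$ --- legitimate since $\cf(\aleph_0)=\omega\le\omega=\cf(\aleph_\omega)$ --- it gives $(\aleph_{\omega+1},\aleph_\omega)\chang(\aleph_1,\aleph_0)$. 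Then I would invoke Theorem~\ref{theorem: gcc is impossible} with its $\kappa$, $\lambda$, $\mu$ taken to be $\aleph_\omega$ (singular, of cofinality $\omega$), $\aleph_{\omega+1}$ (regular, above $\aleph_\omega$), and $\aleph_0$ (so that $\cf(\aleph_\omega)=\omega\le\aleph_0<\aleph_\omega$): the theorem's Chang hypothesis becomes precisely $(\aleph_{\omega+2},\aleph_{\omega+1})\chang(\aleph_{\omega+1},\aleph_\omega)$, the first instance above, and since $\aleph_0^{<\omega}=\aleph_0$ the theorem yields $(\aleph_{\omega+1},\aleph_\omega)\not\chang(\aleph_1,\aleph_0)$, contradicting the second instance. (Equivalently one uses the unconditional subscripted conclusion $(\aleph_{\omega+1},\aleph_\omega)\not\chang_\omega(\aleph_1,\aleph_0)$, recalling that $\chang_\omega$ coincides with $\chang$.)

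For the positive half I would first note that $\mathrm{GCC}_\omega$ unwinds to the conjunction of the classical successor-of-regular instances $(\aleph_{n+1},\aleph_n)\chang(\aleph_{m+1},\aleph_m)$ for $m<n<\omega$ together with the single instance $(\aleph_{\omega+1},\aleph_\omega)\chang(\aleph_1,\aleph_0)$ --- because for $1\le n<\omega$ the side condition $\cf(\mu)\le\cf(\aleph_n)$ is automatic, whereas $\cf(\mu)\le\cf(\aleph_\omega)=\omega$ forces $\mu=\aleph_0$. The finitely-indexed conjuncts hold simultaneously in the model of \cite{EskewHayut2018}, and the remaining singular conjunct is obtained by a large-cardinal construction at $\aleph_\omega$ of the kind used there and in the body of this paper; the one genuine difficulty on this side, and the step I expect to be the real obstacle, is fitting all of them into a single model so that the forcing securing the singular instance does not destroy the finite ones. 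On the negative side there is essentially nothing to do beyond Theorem~\ref{theorem: gcc is impossible}: the corollary is just the observation that the least singular cardinal $\aleph_\omega$, together with the least regular cardinal $\aleph_{\omega+1}$ above it, already realizes the theorem's tension between Chang's Conjecture at a successor of a singular and at a double successor of a singular.
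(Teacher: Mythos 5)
Your proof is correct and follows the paper's route: the negative direction is exactly the intended instantiation of Theorem~\ref{theorem: gcc is impossible} with $\kappa=\aleph_\omega$, $\lambda=\aleph_{\omega+1}$, $\mu=\aleph_0$ (playing the tension between $(\aleph_{\omega+2},\aleph_{\omega+1})\chang(\aleph_{\omega+1},\aleph_\omega)$ and $(\aleph_{\omega+1},\aleph_\omega)\chang(\aleph_1,\aleph_0)$), and the positive direction is, as in the paper, deferred to \cite[Section~5]{EskewHayut2018}. The only point to add is that the combination step you worry about is already carried out there---that section produces a single model realizing all instances on $[\aleph_0,\aleph_\omega]$, including the singular one---so no further work is needed on the positive side.
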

The negative direction follows from Theorem \ref{theorem: gcc is impossible} and the positive direction is proven in \cite[Section 5]{EskewHayut2018}.

It seems to be unknown whether $(\kappa^+,\kappa) \chang (\mu^+,\mu)$ is equivalent to $(\kappa^+,\kappa) \chang_\mu (\mu^+,\mu)$ for regular $\mu$.  However, further analysis of scales allows us to rule out some instances of Chang's Conjecture in $\ZFC$, and to show that these two notions are not in general equivalent for singular $\mu$, even under $\GCH$.  The authors are grateful to Chris Lambie-Hanson for showing us how to prove the following:

\begin{theorem}
Suppose $\kappa$ is a singular cardinal and $\vec f = \la f_\alpha : \alpha < \kappa^+ \ra$ is a scale for $\kappa$.  There is a club $C \subseteq \kappa^+$ such that for all regular cardinals $\mu,\nu$ such that $\cf(\kappa) < \mu < \mu^{+3} \leq \nu < \mu^{+\cf(\kappa)} \leq \kappa$, $\vec f$ is good at every $\alpha \in C$ of cofinality $\nu$.
\end{theorem}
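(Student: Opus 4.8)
The plan is to pin down the club $C$ from a single elementary chain, translate goodness into the existence of an exact upper bound with the right coordinate cofinalities, and then eliminate bad points of the relevant cofinalities by a $\PCF$ argument.

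First I would fix a regular $\chi\gg\kappa$, a well-order $\lhd$ of $H_\chi$, and a continuous $\in$-increasing chain $\la N_\xi:\xi<\kappa^+\ra$ of elementary substructures of $(H_\chi,\in,\lhd,\vec f,\la\kappa_i:i<\cf(\kappa)\ra)$ with $|N_\xi|=\kappa$, $N_\xi\cap\kappa^+\in\kappa^+$, and $\la N_\eta:\eta\le\xi\ra\in N_{\xi+1}$; put $C=\{\alpha<\kappa^+:N_\alpha\cap\kappa^+=\alpha\}$, a club that does not depend on $\mu$ or $\nu$. After discarding a bounded set of coordinates I may assume each $\kappa_i$ is uncountable. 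Recall that for $\alpha<\kappa^+$ with $\cf(\alpha)>\cf(\kappa)$ the scale $\vec f$ is good at $\alpha$ exactly when $\la f_\beta:\beta<\alpha\ra$ has an exact upper bound $h$ modulo the bounded ideal $J$ on $\cf(\kappa)$ with $\cf(h(i))=\cf(\alpha)$ for $J$-almost all $i$; so it suffices to show that no $\alpha\in C$ whose cofinality lies in the prescribed range is bad for $\vec f$.

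Now suppose $\alpha\in C$, $\cf(\alpha)=\nu$ with $\cf(\kappa)<\mu<\mu^{+3}\le\nu<\mu^{+\cf(\kappa)}\le\kappa$, and $\vec f$ is bad at $\alpha$. Since $\cf(\alpha)=\nu>\mu>\cf(\kappa)$, Shelah's trichotomy theorem applies to $\la f_\beta:\beta<\alpha\ra$, and in each of its cases (including the ``ugly'' one, where one reflects a non-stabilising $\subseteq$-increasing chain of subsets of $\cf(\kappa)$) the elementarity of $N_\alpha$ lets me extract a $J$-positive $S\subseteq\cf(\kappa)$ and regular cardinals $\la\lambda_i:i\in S\ra$ with $\nu=\operatorname{tcf}\big(\prod_{i\in S}\lambda_i/(J\restriction S)\big)$, so that $\nu\in\mathrm{pcf}(\{\lambda_i:i\in S\})$ with $|S|\le\cf(\kappa)<\mu$. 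The key structural point is that these $\lambda_i$ arise as cofinalities of an exact upper bound of $\vec f$, whose own coordinates $\kappa_i$ are cofinal in $\kappa\ge\mu^{+\cf(\kappa)}>\nu$: so the relevant $\mathrm{pcf}$-generator is built from regular cardinals $\le\nu$ lying within a \emph{bounded} initial segment of $\la\kappa_i\ra$, a set of size $<\cf(\kappa)$. I would then use Shelah's theory — $\mathrm{pcf}(B_0\cup B_1)=\mathrm{pcf}(B_0)\cup\mathrm{pcf}(B_1)$ for progressive sets, triviality of $\mathrm{pcf}$ of finite sets, the no-holes theorem, localisation, and the bounds on $\mathrm{pp}$ — to peel off the contributions of $\mu^+$ and $\mu^{++}$ and reduce to $\nu=\mu^{+3}\in\mathrm{pcf}(B')$ for a set $B'$ of regular cardinals $\le\mu$ with $\sup B'$ of cofinality $\le\cf(\kappa)<\mu$, and finally to contradict this using Shelah's bound on $\mathrm{pp}(\sup B')$ against the regularity of $\mu$ and the room provided by $\mu^{+\cf(\kappa)}\le\kappa$. (One may as well assume $\nu=\mu^{+3}$ from the start: writing $\nu=\mu^{+k}$, the pair $(\mu^{+(k-3)},\nu)$ is again admissible because $\cf(\kappa)$ is additively indecomposable and $k-3<\cf(\kappa)$ — up to the minor point that $\mu^{+(k-3)}$ must be replaced by a nearby \emph{regular} cardinal when $k-3$ is a limit ordinal.)

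The hard part — the real obstacle — is precisely that last reduction: squeezing the $\mathrm{pcf}$ witness for $\nu$ into a \emph{short} interval just below $\nu$ rather than merely into ``$\le\cf(\kappa)$ many regulars below $\nu$'', and ruling out the stubborn scenario of a singular cardinal $\sigma<\mu$ of cofinality $\le\cf(\kappa)$ with $\mathrm{pp}(\sigma)\ge\mu^{+3}$ whose generator sits among the coordinates of $\vec f$. This is where both numerical hypotheses must bite — $\cf(\kappa)<\mu$ to make everything progressive and legitimise the peeling argument, and $\mu^{+\cf(\kappa)}\le\kappa$ to force the coordinates of $\vec f$ past $\nu$ so that the witnessing cardinals are trapped low — and where Shelah's fine $\mathrm{pcf}$ structure theory (see \cite{AbrahamMagidorHandbook}) is used essentially. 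A parallel route worth pursuing, which would yield the theorem with $C$ an approachability club, is to prove directly that $S^{\kappa^+}_\nu\in I[\kappa^+]$ under these hypotheses and then invoke the standard fact that points of $S^{\kappa^+}_\nu$ that are approachable with respect to a fixed witnessing sequence are good for every scale (see \cite{CummingsForemanMagidor2001}).
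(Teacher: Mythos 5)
There is a genuine gap here, and you flag it yourself: the step you defer as ``the hard part --- the real obstacle'' is the entire content of the theorem, and the route you sketch for filling it would not go through. First, your claim that in \emph{every} case of Shelah's trichotomy (including the bad and ugly cases) the elementarity of $N_\alpha$ yields a $J$-positive $S$ and regulars $\la \lambda_i : i \in S \ra$ with $\nu = \operatorname{tcf}\bigl(\prod_{i\in S}\lambda_i/(J\restriction S)\bigr)$ is unsubstantiated: in the non-good cases there is no exact upper bound, and no such representation is available. The standard way to dispose of those cases is precisely the ingredient you never invoke, namely the theorem (cited in the paper as \cite[Theorem 2.21]{AbrahamMagidorHandbook}, going back to Shelah and used as in Lemmas 6--8 of \cite{MagidorShelah1994}) that for regular $\mu>\cf(\kappa)$ and $\nu\geq\mu^{+3}$ there is a club $C_{\mu,\nu}\subseteq\kappa^+$ such that at every $\alpha\in C_{\mu,\nu}$ of cofinality $\nu$ the sequence $\la f_\beta:\beta<\alpha\ra$ has an exact upper bound $g$ with $\cf(g(i))\geq\mu$ for \emph{all} $i$. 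That is where the hypothesis $\mu^{+3}\leq\nu$ does its work, and the lower bound $\cf(g(i))\geq\mu$ is exactly what your outline lacks. Second, the finish you propose --- peeling with no-holes/localisation and contradicting a $\mathrm{pp}$ bound --- cannot succeed in $\ZFC$: the ``stubborn scenario'' you yourself identify (a singular $\sigma\leq\mu$ of cofinality $\leq\cf(\kappa)$ with $\mathrm{pp}(\sigma)\geq\nu$ sitting among the coordinates) is consistent, so no cardinal-arithmetic bound excludes it; it must be excluded structurally, and the only structure available is again $\cf(g(i))\geq\mu$, which is what prevents the $\lambda_i$ from accumulating below $\mu$.

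Once that exact-upper-bound theorem is in place, the remainder is much softer than the $\PCF$ apparatus you list, and this is how the paper argues: since $\nu<\mu^{+\cf(\kappa)}$, the interval $[\mu,\nu]$ contains fewer than $\cf(\kappa)$ regular cardinals (this is where that hypothesis bites); an exactness argument shows $\cf(g(i))>\nu$ cannot hold on an unbounded set of $i$; so by pigeonhole there are $\nu'\in[\mu,\nu]$ and an unbounded $X\subseteq\cf(\kappa)$ with $\cf(g(i))=\nu'$ on $X$; and interleaving a pointwise increasing $\nu'$-sequence converging to $g$ on $X$ with a sequence cofinal in $\alpha$ forces $\nu'=\nu$ (in your language: $\operatorname{tcf}\bigl(\prod_{i\in X}\cf(g(i))/(J\restriction X)\bigr)=\nu'$ because $|X|\leq\cf(\kappa)<\mu\leq\nu'$ makes constant functions cofinal, while it must equal $\nu$). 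Goodness at $\alpha$ follows, and the club $C$ is simply the intersection of the at most $\kappa$ many clubs $C_{\mu,\nu}$; your elementary-chain club and the reduction to $\nu=\mu^{+3}$ are unnecessary. Your alternative suggestion of proving $S^{\kappa^+}_\nu\in I[\kappa^+]$ is likewise not carried out and is a substantially stronger statement than what is needed here.
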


\begin{proof}
Suppose $\cf(\kappa) < \mu < \mu^{+3} \leq \nu < \mu^{+\cf(\kappa)} \leq \kappa$.  By \cite[Theorem 2.21]{AbrahamMagidorHandbook}, there is a club $C_{\mu,\nu} \subseteq \kappa^+$ such that for every $\alpha \in C_{\mu,\nu}$ of cofinality $\nu$, $\la f_\beta : \beta < \alpha \ra$ has an exact upper bound $g$ such that $\cf(g(i)) \geq \mu$ for all $i$.  $g$ being an exact upper bound means that $g$ is an upper bound to $\la f_\beta : \beta < \alpha \ra$, and for every $h < g$, there is $\beta < \alpha$ such that $h < f_\beta$.  

The arguments for Lemmas 6--8 of \cite{MagidorShelah1994} show that $\cf(g(i)) = \nu$ on a cobounded set of $i<\cf(\kappa)$, which implies $\vec f$ is good at $\alpha$.  For the reader's convenience:  Let $\la \alpha_j : j < \nu \ra$ be cofinal in $\alpha$.  We cannot have that $\cf(g(i)) > \nu$ for all $i$ in an unbounded set $X \subseteq \cf(\kappa)$.  For then there would be an $i^* < \cf(\kappa)$, an unbounded $Y \subseteq \nu$, and an $h < g$ such that $f_{\alpha_j}(i) < h(i) <  g(i)$ for $i \in X \setminus i^*$ and $j \in Y$, contradicting that $g$ is an exact upper bound.  Thus there is some $\nu' \in [\mu,\nu]$ and an unbounded $X \subseteq \cf(\kappa)$ such that $\cf(g(i)) = \nu'$ for all $i \in X$.  Let $\la g_k : k < \nu' \ra$ be a pointwise increasing sequence such that $\sup_{k < \nu'} g_k(i) = g(i)$ for all $i \in X$.  Since $g$ is an exact upper bound, for each $k < \nu'$, there is $j < \nu$ such that $g_k \restriction X < f_{\alpha_j} \restriction X$.  Also, for each $j < \nu$, there is $i^* < \cf(\kappa)$ such that $f_{\alpha_j}(i) < g(i)$ for $i \in X \setminus i^*$, and thus some $k < \nu'$ such that $f_{\alpha_j} \restriction X < g_k \restriction X.$  This implies $\nu' = \nu$.

Finally, we can take the intersection of all the $C_{\nu,\mu}$ for regular $\nu,\mu < \kappa^+$ to get the desired club $C$.
\end{proof}

Therefore, if $\kappa$ is singular, $(\kappa^+,\kappa) \chang_{\cf(\kappa)} (\mu^+,\mu)$ fails whenever $\cf(\kappa)^{+3} \leq \mu < \cf(\kappa)^{+\cf(\kappa)}$. 
%(It is a well-known open problem whether $(\aleph_{\omega+1},\aleph_\omega) \chang (\aleph_2,\aleph_1)$ is consistent.)  
However, it is possible that the version of Chang's Conjecture holds when we drop the subscript ``$\cf(\kappa)$'' on the arrow:

\begin{prop}
Suppose there is a 3-huge cardinal.  Then there are singular cardinals $\lambda<\delta$ such that $\cf(\delta) < \lambda < \cf(\delta)^{+\cf(\delta)}$ and $(\delta^+,\delta) \chang (\lambda^+,\lambda)$.
\end{prop}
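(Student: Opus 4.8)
The plan is to realize the instance with $\lambda=\aleph_\omega$ and $\delta=\aleph_{\omega_1}$. These are singular with $\lambda<\delta$, and since $\cf(\delta)=\cf(\omega_1)=\omega_1$ and $\cf(\delta)^{+\cf(\delta)}=\aleph_1^{+\omega_1}=\aleph_{\omega_1}$, one has $\cf(\delta)=\omega_1<\aleph_\omega=\lambda<\aleph_{\omega_1}=\cf(\delta)^{+\cf(\delta)}$; so it suffices to produce a model of $(\aleph_{\omega_1+1},\aleph_{\omega_1})\chang(\aleph_{\omega+1},\aleph_\omega)$ (any singular $\lambda$ with $\omega_1<\lambda<\aleph_{\omega_1}$ would serve; we take $\aleph_\omega$ for concreteness). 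One should keep in mind that the $\ZFC$ observation immediately preceding the proposition forces the subscripted principle $(\aleph_{\omega_1+1},\aleph_{\omega_1})\chang_{\omega_1}(\aleph_{\omega+1},\aleph_\omega)$ to fail in whatever model we build --- this is its instance with $\kappa=\aleph_{\omega_1}$, $\mu=\aleph_\omega$, since $\omega_1^{+3}=\aleph_4\le\aleph_\omega<\aleph_{\omega_1}=\omega_1^{+\omega_1}$. Consequently the Chang argument cannot produce its witness $\mathfrak B$ as the pointwise image $i[\mathfrak A_0]$ of a structure under an elementary embedding $i$, because any such image contains the critical point of $i$, hence all of $\omega_1$, as a subset; the witness must instead be obtained by elementarity and club reflection, with no control over whether it contains $\omega_1$, and for the ``bad'' structures it will not.

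To build the model, fix a $3$-huge embedding $j:V\to M$ with critical sequence $\kappa_0<\kappa_1<\kappa_2<\kappa_3$, $j(\kappa_i)=\kappa_{i+1}$, and ${}^{\kappa_3}M\subseteq M$; passing to the appropriate derived normal fine ultrafilters, $\kappa_1$ is $2$-huge in $V$ with targets $\kappa_2,\kappa_3$ and $\kappa_2$ is huge with target $\kappa_3$. First I would force a preliminary reverse-Easton iteration to arrange $\GCH$ and to render these hugeness properties indestructible under the forcings to come, in the manner of a Laver preparation. Then force with $\mathbb R=\mathbb R_0*\dot{\mathbb R}_1*\dot{\mathbb R}_2*\dot{\mathbb R}_3$, where $\mathbb R_0*\dot{\mathbb R}_1$ is a Prikry-type forcing with interleaved L\'evy collapses at $\kappa_1$ --- of the sort developed in Section \ref{various} for the countable-cofinality instances --- making $\kappa_1=\aleph_\omega$, so $\kappa_1^+=\aleph_{\omega+1}$; $\dot{\mathbb R}_2$ is a Magidor-type forcing with interleaved collapses at $\kappa_2$ changing its cofinality to $\omega_1$ and making $\kappa_2=\aleph_{\omega_1}$, so $\kappa_2^+=\aleph_{\omega_1+1}$; and $\dot{\mathbb R}_3$ collapses an appropriate portion of $(\kappa_2,\kappa_3)$, its role being to secure closure of the relevant target model and to supply the master conditions needed below. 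All ingredients are chosen so that $\mathbb R$ is coherent enough with its images under the $2$-huge embedding of $\kappa_1$ (and the huge embedding of $\kappa_2$) that, in a mild further generic extension of the final model, one recovers generic elementary embeddings relating the $\kappa_1$- and $\kappa_2$-blocks of the final model, as in the constructions of \cite{EskewHayut2018} and the later sections of this paper.

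Using those embeddings one shows, by a club-reflection argument, that in the final model $V[G]$ there are stationarily many $\mathfrak B\prec H_\theta$ (any large $\theta$) with $|\mathfrak B|=\aleph_{\omega+1}$ and $|\mathfrak B\cap\aleph_{\omega_1}|=\aleph_\omega$; since any such $\mathfrak B$ furnishes, for every structure on $\aleph_{\omega_1+1}$ coded into $H_\theta$, a substructure of size $\aleph_{\omega+1}$ meeting $\aleph_{\omega_1}$ in a set of size $\aleph_\omega$, this gives $(\aleph_{\omega_1+1},\aleph_{\omega_1})\chang(\aleph_{\omega+1},\aleph_\omega)$, modulo the usual absoluteness for the mild further forcing (cf.\ Lemma \ref{preserve} and \cite{Foreman2009}). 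The hard part will be the lifting. Singularizing $\kappa_1$ kills its measurability, so its $2$-huge embedding does not survive $\mathbb R_1$ as a ground-model embedding; moreover $\mathbb R_2$ acts above that embedding's critical point, so its image must be accommodated as well, and neither a Prikry- nor a Magidor-type generic extends to its image under such an embedding by a bare master condition, since the would-be master condition would need a stem of inadmissible length. Circumventing this is where the third level of hugeness is spent: it should provide just enough agreement between $V[G]$ and the target, and enough master conditions for the tail forcings, to reconstruct the requisite generic embeddings in a further mild extension, while --- by the constraint noted in the first paragraph --- keeping the critical point of the operative embedding low enough that the reflected substructures genuinely omit a tail of $\omega_1$. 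Verifying that the $\mathbb R_i$ can be chosen to meet all of these demands simultaneously is, I expect, where the bulk of the work lies.
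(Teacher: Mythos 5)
Your proposal does not actually prove anything: its entire technical core --- singularizing $\kappa_1$ and $\kappa_2$ to cofinalities $\omega$ and $\omega_1$ with interleaved collapses while somehow reconstructing generic elementary embeddings relating the two blocks, and arranging that the reflected substructures omit a tail of $\omega_1$ --- is left as a plan whose feasibility you yourself flag as ``where the bulk of the work lies.''  That is precisely the part for which no technique is known: every construction in this paper and in \cite{EskewHayut2018} produces its Chang witnesses via pointwise images or master conditions, which automatically yield the subscripted principle, and, as you correctly observe, $(\aleph_{\omega_1+1},\aleph_{\omega_1})\chang_{\omega_1}(\aleph_{\omega+1},\aleph_\omega)$ is outright refutable in $\ZFC$ by the scale argument; so the lifting you hope for would have to be of a genuinely new kind, and nothing in your sketch supplies it.  (Whether the non-subscripted instance at these small alephs is consistent is essentially open.)  Moreover, even if your construction could be completed, it would establish only a consistency result, whereas the proposition asserts a direct implication from the existence of a 3-huge cardinal.

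The missing idea is that no forcing is needed, because the proposition does not require $\delta$ and $\lambda$ to be small alephs: one can take them in the region of the huge cardinals themselves.  The paper's proof is one paragraph: let $j:V\to M$ be 3-huge with critical point $\kappa$ and $M^{j^3(\kappa)}\subseteq M$, and set $\delta=j^2(\kappa)^{+j(\kappa)}$, $\lambda=j(\kappa)^{+\kappa}$, so that $\cf(\delta)=j(\kappa)<\lambda<j(\kappa)^{+j(\kappa)}=\cf(\delta)^{+\cf(\delta)}$ and both cardinals are singular.  Given any structure $\mathfrak A$ on $\delta^+$, the closure of $M$ gives $j[\mathfrak A]\in M$, $j[\mathfrak A]\prec j(\mathfrak A)$, $|j[\mathfrak A]|=\delta^+=j(\lambda^+)$ and $|j[\mathfrak A]\cap j(\delta)|=\delta=j(\lambda)$; pulling this statement back through $j$ yields $\mathfrak B\prec\mathfrak A$ with $|\mathfrak B|=\lambda^+$ and $|\mathfrak B\cap\delta|=\lambda$, already in $V$.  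Note that this is compatible with the $\ZFC$ limitation you cite: the witness $j[\mathfrak A]$ contains $\kappa$ but not $\cf(j(\delta))=j^2(\kappa)$, and the reflected $\mathfrak B$ is merely asserted to exist, with no claim that it contains $\cf(\delta)=j(\kappa)$ --- which is exactly why choosing $\lambda$ strictly between $\cf(\delta)$ and $\cf(\delta)^{+\cf(\delta)}$ is the whole point of the proposition, not an obstacle to it.
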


\begin{proof}
Let $j : V \to M$ have critical point $\kappa$, with $M^{j^3(\kappa)} \subseteq M$.  Let $\delta = j^2(\kappa)^{+j(\kappa)}$ and let $\lambda = j(\kappa)^{+\kappa}$.  Let $\mathfrak A$ be any structure on $\delta^+$.  In $M$, $j[\mathfrak A] \prec j(\mathfrak A)$, and we have that $|j[\mathfrak A]| = \delta^+$ and $|j[\mathfrak A] \cap j(\delta)| = \delta$.  Reflecting through $j$, we have that there is $\mathfrak B \prec \mathfrak A$ such that $| \mathfrak B | = j(\kappa)^{+\kappa+1} = \lambda^+$ and $| \mathfrak B \cap \delta | = j(\kappa)^{+\kappa} = \lambda$.
\end{proof}

\section{Chang's Conjecture between successors of various singulars}
\label{various}

Recall that a partial order is $(\kappa,\lambda)$-distributive if forcing with it adds no functions from $\kappa$ to $\lambda$.  The following lemma is a mild generalization of a lemma that was proved in \cite{EskewHayut2018}.
\begin{lemma}\label{lemma: reflecting cc}
Let $\gamma < \kappa$ be such that $\kappa^{+\gamma}$ is a strong limit cardinal and $\kappa$ is $\kappa^{+\gamma+1}$-supercompact, as witnessed by an embedding $j : V \to M$.  If $\mathcal U$ is the ultrafilter on $\kappa$ derived from $j$, then there is $A \in \mathcal U$ such that for every $\alpha < \beta$ in $A \cup \{\kappa \}$ and every iteration $\mathbb P *\dot{\mathbb Q}$ of size $< \beta^{+\gamma}$, such that $\mathbb P$ is $\alpha^{+\gamma+1}$-Knaster and $\Vdash_{\mathbb P} \dot{\mathbb Q}$ is $(\alpha^{+\gamma+1},\alpha^{+\gamma+1})$-distributive, 
$$\Vdash_{\mathbb{P} *\dot{\mathbb Q}} (\beta^{+\gamma + 1}, \beta^{+\gamma}) \chang_{\alpha^{+\gamma}} (\alpha^{+\gamma + 1}, \alpha^{+\gamma}).$$
\end{lemma}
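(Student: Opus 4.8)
The plan is to establish the relevant instances of Chang's Conjecture in $V$ for a $\mathcal U$-large set of $\alpha$, and then show they survive $\mathbb P*\dot{\mathbb Q}$.  First I would record the consequences of $\kappa^{+\gamma}$ being strong limit: $\kappa$ is inaccessible, $2^{\kappa^{+i}}=\kappa^{+i+1}$ for $i<\gamma$, $M$ computes cardinals and $H_{\kappa^{+\gamma+2}}$ correctly, and $j\restriction H^V_{\kappa^{+\gamma+2}}\in M$.  Thus $M$ sees $\kappa<j(\kappa)$ together with an elementary embedding $H_{\kappa^{+\gamma+2}}\to H_{j(\kappa)^{+\gamma+2}}$ of critical point $\kappa$ sending $\kappa\mapsto j(\kappa)$, and more generally sees $\kappa$ carrying the ``$\kappa^{+\gamma+1}$-supercompactness-type'' reflection that the argument will use (the needed measures and restricted embeddings fit inside $M$ by its $\kappa^{+\gamma+1}$-closure).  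I would let $A$ be the set of $\alpha<\kappa$ that reflect these properties below $\kappa$; then $A\in\mathcal U$ because $M$ witnesses all of them for $\kappa$.

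The standard fact that a supercompact cardinal produces Chang's Conjecture — obtained by reflecting $j$ as in the Proposition on $3$-huge cardinals above, and controlling the trace of the submodel on $\kappa^{+\gamma}$ by an $\mathrm{ADS}$-type computation as in the Claim inside the proof of Theorem \ref{theorem: gcc is impossible} — then gives $(\kappa^{+\gamma+1},\kappa^{+\gamma})\chang(\alpha^{+\gamma+1},\alpha^{+\gamma})$ in $V$ for each $\alpha\in A$; running the same reflection inside the embedding witnessing $\beta\in A$ (legitimate since ``$\alpha\in A$'' is one of the reflected properties, so $A\cap\beta$ is coherent with $\beta$'s embedding) gives $(\beta^{+\gamma+1},\beta^{+\gamma})\chang(\alpha^{+\gamma+1},\alpha^{+\gamma})$ in $V$ for all $\alpha<\beta$ in $A\cup\{\kappa\}$.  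Since $\alpha^{+\gamma}$ is a strong-limit successor, $(\alpha^{+\gamma})^{<\alpha^{+\gamma}}=\alpha^{+\gamma}$, so Lemma \ref{bottomcard} upgrades each instance to $(\beta^{+\gamma+1},\beta^{+\gamma})\chang_{\alpha^{+\gamma}}(\alpha^{+\gamma+1},\alpha^{+\gamma})$.

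Now fix $\alpha<\beta$ in $A\cup\{\kappa\}$ and $\mathbb P*\dot{\mathbb Q}$ of size $<\beta^{+\gamma}$ as hypothesized.  Since $\mathbb P$ is $\alpha^{+\gamma+1}$-Knaster it is $\alpha^{+\gamma+1}$-c.c., so by Lemma \ref{preserve} (with $\nu=\alpha^{+\gamma}$, so that $\nu^+=\alpha^{+\gamma+1}$) the instance $(\beta^{+\gamma+1},\beta^{+\gamma})\chang_{\alpha^{+\gamma}}(\alpha^{+\gamma+1},\alpha^{+\gamma})$ passes to $V^{\mathbb P}$.  For the tail $\dot{\mathbb Q}$, which over $V[G]$ is only $(\alpha^{+\gamma+1},\alpha^{+\gamma+1})$-distributive, I would argue preservation by hand: $\mathbb Q$ adds no new subsets of $\alpha^{+\gamma+1}$ (a function $\delta\to\alpha^{+\gamma+1}$ with $\delta\le\alpha^{+\gamma+1}$ extends to one from $\alpha^{+\gamma+1}$), so given $\mathfrak A$ on $\beta^{+\gamma+1}$ in $V[G][h]$ one fixes $V[G]$-names for its functions, applies $\chang_{\alpha^{+\gamma}}$ in $V[G]$ to a structure coding those names together with $\mathbb Q$, $G$, $\beta^{+\gamma}$, $\beta^{+\gamma+1}$ and a surjection of $\alpha^{+\gamma+1}$ onto $H^{V[G]}_{\alpha^{+\gamma+1}}$, collapses the resulting $\mathfrak C$ to a transitive $N$ that is correct below $\alpha^{+\gamma+1}$ and in which $\pi(\mathbb Q)$ is small, checks — using the absence of new subsets of $\alpha^{+\gamma+1}$ — that $\pi[h\cap\mathfrak C]$ is $N$-generic for $\pi(\mathbb Q)$, and lifts $\pi^{-1}$ over this generic to land a Chang submodel of $H^{V[G][h]}_\Theta$ containing a name for $\mathfrak A$ with the required traces.

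The heart of the proof, and the step I expect to be hardest, is this last preservation through the merely distributive $\dot{\mathbb Q}$, where Lemma \ref{preserve} does not apply and one must exploit the exact match between ``$(\alpha^{+\gamma+1},\alpha^{+\gamma+1})$-distributive'' and ``$\chang_{\alpha^{+\gamma}}$'' (both the size bound $<\beta^{+\gamma}$ and the distributivity degree enter here).  A secondary point needing care is phrasing the property defining $A$ so that $A\in\mathcal U$ follows from the $\kappa^{+\gamma+1}$-closure of $M$ alone — so one works with $j\restriction H^V_{\kappa^{+\gamma+2}}\in M$, not with full supercompactness of $\kappa$ inside $M$ — while remaining strong enough to yield the $V$-instances between all pairs in $A\cup\{\kappa\}$, not only between $\kappa$ and members of $A$.
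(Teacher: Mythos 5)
The decisive gap is your final step, the preservation of the Chang relation through the merely $(\alpha^{+\gamma+1},\alpha^{+\gamma+1})$-distributive $\dot{\mathbb Q}$, and it is not a fillable detail. A structure $\mathfrak A$ on $\beta^{+\gamma+1}$ in $V[G][h]$ is in general genuinely new (the distributivity only forbids new functions from $\alpha^{+\gamma+1}$ to $\alpha^{+\gamma+1}$), so as you say one must pass to names; but then your lifting argument needs $\pi[h\cap\mathfrak C]$ to be $N$-generic, i.e.\ $h$ must meet every dense subset of $\mathbb Q$ lying in $\mathfrak C$ \emph{at a condition inside} $\mathfrak C$. Genericity of $h$ over $V[G]$ only gives $h\cap D\neq\emptyset$; since $\mathbb Q$ can have size close to $\beta^{+\gamma}$, has no closure (so there is no master condition for the $\alpha^{+\gamma+1}$-sized model $\mathfrak C$) and no chain condition is available at this stage, nothing pushes the witnessing conditions into $\mathfrak C$, and ``no new subsets of $\alpha^{+\gamma+1}$'' says nothing about the trace of $h$ on a model of that size. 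Indeed, Chang's Conjecture is not in general preserved by distributive forcing, which is exactly why the paper does not argue ``prove it in $V$, then preserve'': its proof treats $\mathbb P*\dot{\mathbb Q}$ as a whole, assumes names $\dot f_\alpha$ for counterexamples on a measure-one set, reflects to $M$, applies $\dot f=j(\la \dot f_\alpha\ra)(\kappa)$ to $X=j[\kappa^{+\gamma+1}]$, uses the distributivity (at level $\kappa^{+\gamma+1}=|X|$, matching the hypothesis) to replace the forced increasing sequence $\la\dot b_\alpha\ra\subseteq X$ by a $\mathbb P$-name, uses the Knaster property of $\mathbb P$ to stabilize $\kappa^{+\gamma+1}$ many deciding conditions into a pairwise compatible family, and closes with an Erd\H{o}s--Rado argument below the strong limit $\kappa^{+\gamma}$. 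In your decomposition the Knaster hypothesis has already been spent on Lemma \ref{preserve} and is unavailable precisely where the interaction of the two parts of the iteration has to be controlled; this interaction is the heart of the lemma, not a routine preservation.

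Two further points. First, your subscript upgrade is incorrect as stated: when $\gamma$ is a limit ordinal, $\alpha^{+\gamma}$ is a singular limit cardinal, not a ``strong-limit successor,'' and $(\alpha^{+\gamma})^{<\alpha^{+\gamma}}\geq(\alpha^{+\gamma})^{\cf(\gamma)}>\alpha^{+\gamma}$, so the hypothesis $\mu^{<\nu}=\mu$ of Lemma \ref{bottomcard} fails; the paper instead proves the $\chang_\gamma$ version and raises the subscript with Lemma \ref{stepup}, using that $\kappa^{+\gamma}$ is a strong limit and $|\mathbb P*\dot{\mathbb Q}|<\kappa^{+\gamma}$ in the extension. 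Second, your opening step (the instances of Chang's Conjecture in $V$ between pairs from $A\cup\{\kappa\}$) is plausible by a standard reflection, but it is only the trivial-forcing case of the lemma; the entire content of the statement lies in the forcing extensions, which that step does not address.
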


\begin{proof}
We show that for a set $A \in \mathcal U$, for every $\alpha \in A$ and every iteration $\mathbb P * \dot{\mathbb Q}$ satisfying the hypothesis for $\beta = \kappa$ forces $(\kappa^{+\gamma + 1}, \kappa^{+\gamma}) \chang_{\alpha^{+\gamma}} (\alpha^{+\gamma + 1}, \alpha^{+\gamma})$.  Then standard reflection arguments yield the desired conclusion.  By Lemma \ref{stepup}, it suffices to prove that for all $\alpha \in A$, every such $\mathbb P * \dot{\mathbb Q}$ forces $(\kappa^{+\gamma+1},\kappa^{+\gamma}) \chang_\gamma (\alpha^{+\gamma + 1}, \alpha^{+\gamma})$, since by the assumptions that $\kappa^{+\gamma}$ is a strong limit and $|\mathbb P * \dot{\mathbb Q}|< \kappa^{+\gamma}$, it is forced that for some $\lambda \in [\kappa, \kappa^{+\gamma})$, $\lambda^\kappa < \kappa^{+\gamma}$, so we may increase the subscript to $\alpha^{+\gamma}$.  If the claim fails, then on a set $B \in \mathcal U$, for every $\alpha \in B$, there is an iteration $\mathbb P_\alpha * \dot{\mathbb Q}_\alpha$ and a name for a function $\dot f_\alpha : (\kappa^{+\gamma+1})^{<\omega} \to \kappa^{+\gamma}$ such that it is forced that for every $X \subseteq \kappa^{+\gamma+1}$ of size $\alpha^{+\gamma+1}$ with $\gamma \subseteq X$, the closure of $X$ under $\dot f_\alpha$ contains $\alpha^{+\gamma+1}$-many ordinals below $\kappa^{+\gamma}$.  We may assume that $\dot f_\alpha$ is forced to be closed under compositions.

In $M$, let $\mathbb P * \dot{\mathbb Q} = j(\la\mathbb P_\alpha * \dot{\mathbb Q}_\alpha : \alpha < \kappa \ra)(\kappa)$ and let $\dot f = j(\la \dot f_\alpha : \alpha < \kappa \ra)(\kappa)$.  Let $X = j[\kappa^{+\gamma+1}]$.  Note that $X$ is a subset of $j(\kappa^{+\gamma+1})$ containing $\gamma$ and of size $\kappa^{+\gamma+1}$.  By hypothesis, $\Vdash^M_{\mathbb P *\dot{\mathbb Q}} |\dot f[X^{<\omega}]| = \kappa^{+\gamma+1}$.  Since $j(\kappa^{+\gamma})$ is singular, it is forced that there is a sequence $\la \dot b_\alpha : \alpha < \kappa^{+\gamma+1} \ra \subseteq X$ such that $\la \dot f(\dot b_\alpha) : \alpha < \kappa^{\gamma+1} \ra$ is a strictly increasing sequence of ordinals below $j(\kappa^{+\xi})$, for some $\xi < \gamma$.  Let $\nu < \gamma$ and $(p_0,\dot q_0) \in \mathbb P * \dot{\mathbb Q}$ be such that $|\mathbb P * \dot{\mathbb Q}| < j(\kappa^{+\nu})$ and $(p_0,\dot q_0) \Vdash \dot f(\dot b_\alpha) < j(\kappa^{+\nu})$ for all $\alpha <  \kappa^{+\gamma+1}$.

Since $\dot{\mathbb Q}$ adds no subsets to $X$, there is a $\mathbb P$-name $\dot Y$ and a condition $(p_1,\dot q_1) \leq (p_0,\dot q_0)$ such that $(p_1,\dot q_1) \Vdash \la \dot b_\alpha : \alpha < \kappa^{+\gamma+1} \ra = \dot Y$.  Next, for each $\alpha < \kappa^{+\gamma+1}$, find $r_\alpha \leq p_1$ and $a_\alpha \in (\kappa^{+\gamma+1})^{<\omega}$ such that $r_\alpha \Vdash_{\mathbb P} j(\check a_\alpha) = \dot Y(\alpha)$.  Since $\mathbb P$ is $\kappa^{+\gamma+1}$-Knaster, there is $Z \subseteq \kappa^{+\gamma+1}$ of size $\kappa^{+\gamma+1}$ such that $r_\alpha$ and $r_\beta$ are compatible for $\alpha,\beta \in Z$.  Therefore, for $\alpha < \beta$ in $Z$, there is $r \in \mathbb P$ such that $(r,\dot q_1) \Vdash \dot f(j(\check a_\alpha)) < \dot f(j(\check a_\beta)) < j(\kappa^{+\nu})$.

Reflecting these statements to $V$, we have that for $\alpha < \beta$ in $Z$, there are $\gamma < \kappa$ and $(p,\dot q) \in \mathbb P_\gamma * \dot{\mathbb Q}_\gamma$ such that $|\mathbb P_\gamma * \dot{\mathbb Q}_\gamma| < \kappa^{+\nu}$ and  $(p,\dot q) \Vdash^V_{\mathbb P_\gamma * \dot{\mathbb Q}_\gamma} \dot f_\gamma(\check a_\alpha) < \dot f_\gamma(\check a_\beta) < \kappa^{+\nu}$.  This defines a coloring of $[\kappa^{+\gamma+1}]^2$ in $\kappa^{+\nu}$-many colors.  Since $\kappa^{+\gamma}$ is a strong limit, the \Erdos-Rado Theorem implies that there is a set $H \subseteq Z$ of size $\kappa^{+\nu+1}$ such that all pairs in $[H]^2$ get the same color.  Thus we have a fixed $\eta$ and a fixed $(p,\dot q) \in \mathbb P_\eta * \dot{\mathbb Q}_\eta$ such that $(p,\dot q) \Vdash \dot f_\eta(a_\alpha) < \dot f_\eta(a_\beta) < \kappa^{+\nu}$ for $\alpha < \beta$ in $H$.  This is a contradiction.
\end{proof}

\begin{corollary}\label{cor:cc-alpha+1-beta+1}
If there is a $(+\omega+1)$-supercompact cardinal, then there is a forcing extension in which $ (\aleph_{\alpha+1},\aleph_\alpha) \chang (\aleph_{\beta+1},\aleph_\beta)$ holds for all limit ordinals $0 \leq \beta < \alpha < \omega^2$.
\end{corollary}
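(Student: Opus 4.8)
The plan is to reduce everything to Lemma~\ref{lemma: reflecting cc}, applied with $\gamma=\omega$. First I would pass to a forcing extension in which $\GCH$ holds while $\kappa$ remains $(+\omega+1)$-supercompact; in particular $\kappa^{+\omega}$ is then a strong limit. Fixing a witnessing embedding $j\colon V\to M$ and the derived normal ultrafilter $\mathcal U$ on $\kappa$, Lemma~\ref{lemma: reflecting cc} produces a set $A\in\mathcal U$; shrinking $A$, I may assume that every $\alpha\in A$ is inaccessible (so also that $\alpha^{+\omega}$ is a strong limit, which is automatic under $\GCH$). Fix an increasing $\omega$-sequence $\la\alpha_k : 1\le k<\omega\ra$ from $A$ and set $\delta=\sup_k\alpha_k<\kappa$.

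Next I would let $\mathbb R$ be the iteration of length $\delta$ that is trivial off $\{\alpha_k : k\ge 1\}$, forcing at stage $\alpha_1$ with $\col(\omega,<\alpha_1)$ and, for $k\ge 2$, at stage $\alpha_k$ with $\col\bigl((\alpha_{k-1})^{+\omega+1},<\alpha_k\bigr)$ taken with conditions of size $\le(\alpha_{k-1})^{+\omega+1}$, so that this collapse is $(\alpha_{k-1})^{+\omega+2}$-closed as well as $\alpha_k$-c.c.\ (using that $\alpha_k$ is inaccessible). A routine computation of the cardinal structure of $W=V^{\mathbb R}$ below $\delta$ shows, by induction on $k$, that $(\alpha_k)^{+\omega}=\aleph_{\omega\cdot k}^{W}$ and $(\alpha_k)^{+\omega+1}=\aleph_{\omega\cdot k+1}^{W}$ for all $k\ge 1$ (with $\aleph_{\omega^2}^{W}=\delta$). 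Since the limit ordinals $\beta$ with $\omega\le\beta<\omega^2$ are exactly the $\omega\cdot k$, $1\le k<\omega$, it remains only to check
\[
W\models\bigl((\alpha_k)^{+\omega+1},(\alpha_k)^{+\omega}\bigr)\chang\bigl((\alpha_m)^{+\omega+1},(\alpha_m)^{+\omega}\bigr)\qquad\text{for all }1\le m<k<\omega .
\]

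Fix such a pair $m<k$. Write $\mathbb R=\mathbb R^{k}*\dot{\mathbb R}_{k}$, where $\mathbb R^{k}$ is the part of $\mathbb R$ up to and including stage $\alpha_k$; then $|\mathbb R^{k}|\le\alpha_k<(\alpha_k)^{+\omega}$. Split $\mathbb R^{k}=\mathbb P*\dot{\mathbb Q}$, where $\mathbb P$ is the part up to and including stage $\alpha_m$ (of size $\le\alpha_m$, hence $(\alpha_m)^{+\omega+1}$-Knaster) and $\dot{\mathbb Q}$ is the finite iteration of the collapses at $\alpha_{m+1},\dots,\alpha_k$ (each of these is $(\alpha_m)^{+\omega+2}$-closed, so $\dot{\mathbb Q}$ is $(\alpha_m)^{+\omega+2}$-closed, in particular $\bigl((\alpha_m)^{+\omega+1},(\alpha_m)^{+\omega+1}\bigr)$-distributive). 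These are precisely the hypotheses of Lemma~\ref{lemma: reflecting cc} with $\alpha=\alpha_m$ and $\beta=\alpha_k$, whence
\[
\Vdash_{\mathbb R^{k}}\bigl((\alpha_k)^{+\omega+1},(\alpha_k)^{+\omega}\bigr)\chang_{(\alpha_m)^{+\omega}}\bigl((\alpha_m)^{+\omega+1},(\alpha_m)^{+\omega}\bigr).
\]
Finally, $\dot{\mathbb R}_{k}$ is (forced to be) the full-support iteration of the collapses at $\alpha_j$ for $j>k$, hence $(\alpha_k)^{+\omega+2}$-closed; so it adds no new subsets of $(\alpha_k)^{+\omega+1}$ and preserves all cardinals $\le(\alpha_k)^{+\omega+1}$, and therefore cannot disturb the instance of Chang's Conjecture obtained over $\mathbb R^{k}$ (alternatively, one could invoke Lemma~\ref{preserve}). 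Passing to $W$ finishes the verification.

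I expect the only real delicacy here---the genuine content being already packaged into Lemma~\ref{lemma: reflecting cc}---to be this bit of bookkeeping: ensuring that each needed initial segment of $\mathbb R$ splits as a small (hence $(\alpha_m)^{+\omega+1}$-Knaster) part followed by a part that is genuinely $\bigl((\alpha_m)^{+\omega+1},(\alpha_m)^{+\omega+1}\bigr)$-distributive (this is why the collapses are taken $(\alpha_{k-1})^{+\omega+2}$-closed rather than merely $(\alpha_{k-1})^{+\omega+1}$-closed), together with the routine remark that the highly closed tail of $\mathbb R$ cannot kill instances of Chang's Conjecture already secured beneath it. If $0$ is to be included among the admissible values of $\beta$, one would additionally arrange---by a forcing of size $<(\alpha_1)^{+\omega}$ at the bottom of the iteration, using suitable large cardinals below $\alpha_1$---that $(\aleph_{\omega+1},\aleph_\omega)\chang(\aleph_1,\aleph_0)$ holds in $W$, and then appeal to the transitivity of the relation $\chang$ to obtain the instances with $\beta=0$.
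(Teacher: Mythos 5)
Your overall strategy is the paper's: get $A$ from Lemma~\ref{lemma: reflecting cc} with $\gamma=\omega$, interleave collapses along an $\omega$-sequence from $A$ so that $\alpha_k^{+\omega}=\aleph_{\omega\cdot k}$, factor the forcing for each pair $m<k$ into a small ($(\alpha_m)^{+\omega+1}$-Knaster) part followed by a $((\alpha_m)^{+\omega+1},(\alpha_m)^{+\omega+1})$-distributive part, preserve under the closed tail, and finish by transitivity (the paper uses the full-support product $\col(\omega,\alpha_0^{+\omega})\times\prod_n\col(\alpha_n^{+\omega+2},\alpha_{n+1})$ rather than an iteration, but that difference is cosmetic). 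However, the specific collapse you propose at stages $k\ge 2$ does not work: $\col\bigl((\alpha_{k-1})^{+\omega+1},<\alpha_k\bigr)$ ``with conditions of size $\le(\alpha_{k-1})^{+\omega+1}$'' is indeed $(\alpha_{k-1})^{+\omega+2}$-closed, but precisely because of that closure it adds no new $(\alpha_{k-1})^{+\omega+1}$-sequences, hence no surjections from $(\alpha_{k-1})^{+\omega+1}$ onto larger cardinals; concretely, a single condition can already freeze an entire section of the would-be collapsing function, so the poset collapses nothing and your claimed cardinal pattern $(\alpha_k)^{+\omega}=\aleph_{\omega\cdot k}$ fails. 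The correct fix is the paper's choice: collapse to $(\alpha_{k-1})^{+\omega+2}$, e.g.\ use $\col\bigl((\alpha_{k-1})^{+\omega+2},<\alpha_k\bigr)$, whose conditions have size $\le(\alpha_{k-1})^{+\omega+1}$ and which is therefore $(\alpha_{k-1})^{+\omega+2}$-closed, giving the distributivity you need; the surviving extra cardinal $(\alpha_{k-1})^{+\omega+2}$ still leaves exactly $\omega$ cardinals in each block, so the $\aleph_{\omega\cdot k}$ computation comes out right. (Your preliminary GCH step to secure the strong-limit hypothesis of Lemma~\ref{lemma: reflecting cc} is reasonable, though you should say why it preserves $\kappa^{+\omega+1}$-supercompactness.)

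The second genuine gap is the case $\beta=0$, which is part of the statement: your construction does not produce $(\aleph_1,\aleph_0)$ as a lower pair, and your suggested remedy (``suitable large cardinals below $\alpha_1$'') is not available from the hypothesis --- membership of $\alpha_1$ in $A$ does not give you a lower cardinal with enough supercompactness to rerun the lemma beneath it. No extra hypothesis is needed. Do what the paper does: keep the least chosen point $\alpha_0\in A$ and make the bottom forcing $\col(\omega,\alpha_0^{+\omega})$ instead of $\col(\omega,<\alpha_1)$. Lemma~\ref{lemma: reflecting cc} yields, after the collapses at and above $\alpha_1$, the subscripted instance $(\alpha_1^{+\omega+1},\alpha_1^{+\omega})\chang_{\alpha_0^{+\omega}}(\alpha_0^{+\omega+1},\alpha_0^{+\omega})$; the bottom collapse has size $\alpha_0^{+\omega}$, hence is $(\alpha_0^{+\omega})^+$-c.c., so Lemma~\ref{preserve} preserves this instance; and in the final model, where $\alpha_0^{+\omega}$ is countable and $\alpha_0^{+\omega+1}=\aleph_1$, it reads exactly $(\aleph_{\omega+1},\aleph_\omega)\chang(\aleph_1,\aleph_0)$. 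Transitivity then gives all instances with $\beta=0$, completing the corollary from the stated hypothesis alone.
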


\begin{proof}
Let $\kappa$ be $\kappa^{+\omega+1}$-supercompact, and let $A \subseteq \kappa$ be given by Lemma \ref{lemma: reflecting cc}.  Let $\la \alpha_i : i < \omega \ra$ enumerate the first $\omega$ elements of $A$.  Let 
$$\mathbb P = \col(\omega,\alpha_0^{+\omega}) \times \prod_{n<\omega} \col(\alpha_n^{+\omega+2},\alpha_{n+1}).$$
Clearly, $\mathbb P$ forces that $\alpha_n^{+\omega} = \aleph_{\omega \cdot n}$ for all $n$.  For a fixed $n$, we can factor $\mathbb P$ as $\mathbb Q_0 \times \col(\alpha_n^{+\omega+2},\alpha_{n+1}) \times \mathbb Q_1$.  By Lemma \ref{lemma: reflecting cc}, the product of the first two factors forces
  %The middle term forces 
  $(\alpha_{n+1}^{+\omega+1},\alpha_{n+1}^{+\omega}) \chang_{\alpha_{n}^{+\omega}} (\alpha_{n}^{+\omega+1},\alpha_{n}^{+\omega})$.  Since $\mathbb Q_1$ remains $\alpha_{n+1}^{+\omega+2}$-distributive after this, the instance of Chang's Conjecture is preserved.  
  Since Chang's Conjecture is transitive, i.e. $(\kappa_1,\kappa_0) \chang (\mu_1,\mu_0)$ and $(\mu_1,\mu_0) \chang (\nu_1,\nu_0)$ implies $(\kappa_1,\kappa_0) \chang (\nu_1,\nu_0)$, the conclusion follows.
\end{proof}

The limitation of our argument so far is that we only get Chang's Conjecture between successors of singulars for which there are tail-end sequences of cardinals below that are order-isomorphic.  We will overcome this with a forcing that collapses singular cardinals to onto others of different types while preserving their successors and the desired instances of Chang's Conjecture.

\begin{theorem}
\label{cc any countable cof}
Assume $\GCH$.  Suppose $\alpha<\beta$ are countable limit ordinals and $\kappa$ is $\kappa^{+\beta+1}$-supercompact.  Then there is a forcing extension in which $(\aleph_{\beta+1},\aleph_\beta) \chang (\aleph_{\alpha+1},\aleph_\alpha)$.
\end{theorem}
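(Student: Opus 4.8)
The plan is to invoke Lemma~\ref{lemma: reflecting cc} with $\gamma := \beta$. This is legitimate: $\beta$ is a countable, hence $<\kappa$, limit ordinal, so $\kappa^{+\beta}$ is a limit cardinal and, under $\GCH$, a strong limit; and $\kappa$ is $\kappa^{+\beta+1}$-supercompact by hypothesis. Let $\mathcal U$ be the associated ultrafilter and $A\in\mathcal U$ as given by the lemma; shrinking $A$ we may assume each $p\in A$ is inaccessible. Fixing $p\in A$ and taking $b=\kappa$ in the lemma, it suffices to exhibit an iteration $\mathbb P*\dot{\mathbb Q}$ of size $<\kappa^{+\beta}$ with $\mathbb P$ being $p^{+\beta+1}$-Knaster and $\Vdash_{\mathbb P}\dot{\mathbb Q}$ being $(p^{+\beta+1},p^{+\beta+1})$-distributive, such that in $V^{\mathbb P*\dot{\mathbb Q}}$ one has $\kappa^{+\beta}=\aleph_\beta$, $\kappa^{+\beta+1}=\aleph_{\beta+1}$, $p^{+\beta+1}=\aleph_{\alpha+1}$, and $|p^{+\beta}|=\aleph_\alpha$; for then the lemma gives $(\kappa^{+\beta+1},\kappa^{+\beta})\chang_{p^{+\beta}}(p^{+\beta+1},p^{+\beta})$ in that model, and dropping the subscript and reading off cardinals yields $(\aleph_{\beta+1},\aleph_\beta)\chang(\aleph_{\alpha+1},\aleph_\alpha)$.

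Assuming $\alpha\ge\omega$, I would take $\mathbb P=\col(\omega_1,{<}p)\times\mathbb R$ and $\dot{\mathbb Q}=\col(p^{+\beta+2},\kappa^{+\alpha})$ (computed in $V^{\mathbb P}$). The factor $\col(\omega_1,{<}p)$ makes $p=\aleph_2$, so $p^{+\delta}=\aleph_{2+\delta}$ and in particular $p^{+\alpha}=\aleph_\alpha$. The poset $\mathbb R$ is to collapse the singular cardinal $p^{+\beta}$ \emph{onto} $p^{+\alpha}$: it should give every ordinal in $(p^{+\alpha},p^{+\beta}]$ cardinality $p^{+\alpha}$ while preserving all cardinals $\le p^{+\alpha}$ and all cardinals $\ge p^{+\beta+1}$, and should be $p^{+\beta+1}$-Knaster and of size $p^{+\beta+1}$. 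Granting $\mathbb R$, after $\mathbb P$ the cardinal $p^{+\alpha}$ is immediately followed by $p^{+\beta+1}$, so $p^{+\beta+1}=\aleph_{\alpha+1}$, $p^{+\beta+2}=\aleph_{\alpha+2}$, and $|p^{+\beta}|=\aleph_\alpha$; and $\mathbb P$ is $p^{+\beta+1}$-Knaster (the first factor trivially so, since $|\col(\omega_1,{<}p)|=p<p^{+\beta+1}$, and a product of two $p^{+\beta+1}$-Knaster posets is $p^{+\beta+1}$-Knaster). Then $\dot{\mathbb Q}$, being ${<}p^{+\beta+2}$-closed, is $(p^{+\beta+1},p^{+\beta+1})$-distributive and has size $(\kappa^{+\alpha})^{<p^{+\beta+2}}=\kappa^{+\alpha+1}<\kappa^{+\beta}$ (by $\GCH$ and $\cf(\kappa^{+\alpha})=\omega$); it collapses $[p^{+\beta+2},\kappa^{+\alpha}]$, and — since $\kappa^{+\alpha}$ is singular of cofinality $\omega$ below $p^{+\beta+2}$ — also $\kappa^{+\alpha+1}$, onto $p^{+\beta+2}$, while preserving $\kappa^{+\alpha+2}$. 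Hence in $V^{\mathbb P*\dot{\mathbb Q}}$, $\aleph_{\alpha+2}=p^{+\beta+2}$ is immediately followed by $\kappa^{+\alpha+2}$, so $\kappa^{+\alpha+2+\rho}=\aleph_{\alpha+3+\rho}$; writing $\epsilon$ for the (necessarily limit) ordinal with $(\alpha+2)+\epsilon=\beta$ — which exists as $\alpha+2<\beta$ — one computes $\kappa^{+\beta}=(\kappa^{+\alpha+2})^{+\epsilon}=\aleph_{(\alpha+3)+\epsilon}=\aleph_{\alpha+\epsilon}=\aleph_\beta$, and $\kappa^{+\beta+1}=\aleph_{\beta+1}$. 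The case $\alpha=0$ is easier: one replaces $\col(\omega_1,{<}p)\times\mathbb R$ by the single collapse $\col(\omega,p^{+\beta})$, which has size $p^{+\beta}<p^{+\beta+1}$, hence is $p^{+\beta+1}$-Knaster, sends everything $\le p^{+\beta}$ to $\omega$, and makes $p^{+\beta+1}=\aleph_1$; no singular-onto-singular collapse is needed.

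The hard part is the construction and analysis of $\mathbb R$ in the case $\alpha\ge\omega$: a $p^{+\beta+1}$-Knaster forcing that collapses the singular cardinal $p^{+\beta}$ of cofinality $\omega$ onto the smaller singular cardinal $p^{+\alpha}$ of the same cofinality, \emph{without} collapsing $p^{+\beta+1}$. This is precisely the promised ``forcing that collapses singular cardinals onto others of different types while preserving their successors.'' The naive guess $\col(p^{+\alpha},p^{+\beta})$ fails, because for a singular $\mu$ of cofinality below $\lambda$ the collapse $\col(\lambda,\mu)$ collapses $\mu^+$ as well. One expects $\mathbb R$ to be a diagonal/Prikry-type forcing built from small Levy collapses $\col(p^{+\alpha},p^{+\beta_n})$ for a sequence $\beta_n\nearrow\beta$, and the delicate points will be (i) choosing the support so that $\mathbb R$ is sufficiently closed below $p^{+\alpha}$ — so that no cardinal $\le p^{+\alpha}$ is collapsed and $\cf(p^{+\beta})$ is unchanged — yet (ii) $\mathbb R$ is still $p^{+\beta+1}$-Knaster, which seems to require a $\Delta$-system argument exploiting the $\GCH$ identity $(p^{+\beta})^{\aleph_0}=p^{+\beta+1}$, and quite possibly a further thinning of $A$ so that $p$ carries additional reflection (e.g.\ enough supercompactness of $p$ itself) to support the construction.
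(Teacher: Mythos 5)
Your reduction and cardinal bookkeeping are fine as far as they go, and your $\alpha=0$ case is essentially the paper's treatment of that case; but the entire content of the theorem has been delegated to the unconstructed poset $\mathbb R$, and the repair you sketch for it provably cannot work. You need $\mathbb R$ to collapse every cardinal in $(p^{+\alpha},p^{+\beta}]$ to $p^{+\alpha}$ while preserving all cardinals $\le p^{+\alpha}$ and preserving $p^{+\beta+1}$. In any such extension $p^{+\alpha}$ is a cardinal of cofinality $\omega$ whose successor is $p^{+\beta+1}$, so K\"onig's theorem gives $(p^{+\alpha})^{\aleph_0}\ge p^{+\beta+1}$ there; but if $\mathbb R$ is ``sufficiently closed below $p^{+\alpha}$'' in the sense of adding no new $\omega$-sequences of ordinals below $p^{+\alpha}$, then ${}^{\omega}(p^{+\alpha})$ of the extension equals that of $V$, which by \GCH has $V$-cardinality $p^{+\alpha+1}$ and hence cardinality $p^{+\alpha}<p^{+\beta+1}$ in the extension --- a contradiction. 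So $\mathbb R$ must add at least $p^{+\beta+1}$-many new $\omega$-sequences of ordinals below $p^{+\alpha}$ while collapsing nothing $\le p^{+\alpha}$ and remaining $p^{+\beta+1}$-Knaster; in particular it cannot be assembled from collapses $\col(p^{+\alpha},p^{+\beta_n})$ (note also that $p^{+\alpha}$ is singular, so these are not even ${<}p^{+\alpha}$-closed). ``Collapsing a singular onto a smaller cardinal while preserving its successor'' is exactly the phenomenon the paper must produce, and the only known mechanism for it is Prikry-type forcing anchored at a large cardinal sitting at the \emph{bottom} of the collapsed interval; your $p^{+\alpha}$ carries no such structure. (A secondary point: even granting $\mathbb R$, you invoke Lemma \ref{lemma: reflecting cc} in a situation where $\mathbb P$ itself collapses the cardinal $p^{+\beta}$ occurring in the lemma's conclusion, so that conclusion has to be restated and re-checked; the paper only ever applies the lemma with the lower pair surviving as cardinals.)

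This is precisely why the paper does not attempt a singular-onto-singular collapse below $\kappa$. It applies Lemma \ref{lemma: reflecting cc} with $\gamma=\tau(\beta)$ (the tail of $\beta$), not $\gamma=\beta$, and splits by comparing $\tau(\alpha)$ with $\tau(\beta)$: when the tails agree or $\alpha=0$, plain L\'evy collapses suffice (its Case 1, which is the only configuration your scheme handles without $\mathbb R$). Otherwise the cardinal that becomes $\aleph_\alpha$ is not some $p^{+\alpha}$ but a supercompact cardinal ($\nu<\kappa$ in Case 2, $\kappa$ itself in Case 3), which the Gitik--Sharon-type forcing $\mathbb P(\mu,\vec\gamma,\vec\delta,\vec U,\vec K)$ simultaneously singularizes and uses as the base of the collapse: the measures on $\p_\nu(\nu^{+\gamma_n})$ and the guiding generics are what allow $\nu^{+\gamma}$ to be collapsed to $\nu$ with $\nu^{+\gamma+1}$ preserved (via $\nu^{+\gamma}$-centeredness and the Prikry property). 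Moreover, in the hardest case $\tau(\alpha)<\tau(\beta)$ this forcing acts at $\kappa$ itself, so the needed instance of Chang's Conjecture cannot be preserved by chain-condition or distributivity arguments alone; the paper proves it survives by an iterated-ultrapower argument (Claim \ref{claim: generic over ultrapower, simple}), constructing an $N_\omega$-generic filter for the image of the forcing and using Bukovsky--Dehornoy closure of $N_\omega[G]$ under $\kappa$-sequences to reflect the Chang substructure. If you try to rescue your plan by thinning $A$ so that $p$ is itself supercompact, the forcing you build will singularize $p$, changing which cardinal becomes $\aleph_\alpha$ and leading you back to the paper's case analysis rather than supplying the $\mathbb R$ your argument requires.
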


The proof breaks into cases depending on the ``tail types'' of $\alpha$ and $\beta$.  For ordinals $\alpha \geq \beta$, let $\alpha - \beta$ be the unique $\gamma$ such that $\alpha = \beta + \gamma$.  For an ordinal $\alpha$, let $\tau(\alpha)$ (the tail of $\alpha$) be $\min_{\beta<\alpha} (\alpha-\beta)$.  Let $\iota(\alpha)$ be the least $\beta$ such that $\alpha = \beta + \tau(\alpha)$.  An ordinal $\alpha$ is indecomposable iff $\alpha = \tau(\alpha)$, and all tails are indecomposable.  

\underline{Case 1}: $\tau(\alpha) = \tau(\beta) = \gamma$, or $\alpha = 0$.  Note that $\iota(\beta) \geq \alpha$, and let $\delta = \iota(\beta) - \alpha$.  Let $A \subseteq \kappa$ be given by Lemma \ref{lemma: reflecting cc} (with respect to $\gamma$).  Let $\zeta < \eta$ be in $A$, and force with $\col(\zeta^{+\gamma+\delta+2},\eta)$, so that the ordertype of the set of cardinals between $\zeta^{+\gamma}$ and $\eta^{+\gamma}$ becomes $\delta+\gamma$.  By Lemma \ref{lemma: reflecting cc}, we have $(\eta^{+\gamma+1},\eta^{+\gamma}) \chang_{\zeta^{+\gamma}} (\zeta^{+\gamma+1},\zeta^{+\gamma}).$  If $\alpha = 0$, force with $\col(\omega,\zeta^{+\gamma})$, and if $\alpha > 0$, force with $\col(\aleph_{\iota(\alpha)+1},\zeta)$.  In both cases, Chang's Conjecture is preserved, and we get $|\zeta^{+\gamma}| = \aleph_\alpha$ and $\eta^{+\gamma} = \aleph_{\alpha + \delta + \gamma} = \aleph_{\beta}$.

\hspace{1mm}

For the other cases, we will use a variation on the Gitik-Sharon forcing \cite{GitikSharon}, which singularlizes a large cardinal while collapsing a singular cardinal above it. 

The following definition is standard (see \cite{GitikHandbook}).
\begin{definition} A structure $\la \mathbb P,\leq,\leq^* \ra$ is a \emph{Prikry-type forcing} when $\leq$ and $\leq^*$ are partial orders of $\mathbb P$ (called \emph{extension} and \emph{direct extension} respectively), with $p \leq^* q \Rightarrow p \leq q$, and such that whenever $\sigma$ is a statement in the forcing language of  $\la \mathbb P,\leq \ra$ and $p \in \mathbb P$, then there is $q \leq^* p$ deciding $\sigma$.  Such a forcing is called \emph{weakly $\kappa$-closed} for a cardinal $\kappa$ if $\la \mathbb P, \leq^* \ra$ is $\kappa$-closed.
\end{definition}

It is easy to see that if $\mathbb P$ is of Prikry type and weakly $\kappa^+$-closed, then it is $(\kappa,\kappa)$-distributive.

Suppose $\gamma< \delta$ are limit ordinals of countable cofinality, and $\vec \gamma = \la \gamma_i : 1\leq i < \omega \ra$, $\vec \delta = \la \delta_i : 1\leq i < \omega \ra$ are sequences such that: 
\begin{enumerate}
\item $\vec \gamma$ is strictly increasing with $\sup_i \gamma_i = \gamma$.
\item $\vec \delta$ is nondecreasing with $\gamma \leq \delta_1$ and $\sum_i \delta_i = \delta$.
\end{enumerate}
Suppose $\kappa > \delta$ is $\kappa^{+\gamma_n}$-supercompact for each $n\geq 1$, and $\mu < \kappa$ is regular.  For $1\leq n<\omega$, let $U_n$ be a $\kappa$-complete normal measure on $\mathcal P_\kappa(\kappa^{+\gamma_n})$, and let $j_n : V \to M_n \cong \Ult(V,U_n)$ be the ultrapower embedding.  By the closure of the ultrapowers and $\GCH$, we may choose an $M_n$-generic $K_n \subseteq \col(\kappa^{+\delta_n+2},j_n(\kappa))^{M_n}$.  Let $\vec U = \la U_n : n < \omega \ra$ and $\vec K = \la K_n : n < \omega \ra$.

With these choices made, we may define the forcing $\mathbb P(\mu,\vec \gamma,\vec \delta,\vec U,\vec K)$, which will have the following properties: 
\begin{itemize}
\item The forcing is of Prikry type, weakly $\mu$-closed, and $\kappa^{+\gamma}$-centered (and thus has the $\kappa^{+\gamma+1}$-c.c.).
\item $\kappa$ is forced to become $\mu^{+\delta}$.
\item $(\kappa^{+\gamma})^V$ is collapsed to $\kappa$.
\end{itemize}

Conditions in $\mathbb P(\mu,\vec \gamma,\vec \delta,\vec U,\vec K)$ are sequences
$$\la f_{0},x_1,f_1,\dots,x_{n},f_{n},F_{n+1},F_{n+2},\dots\ra,$$
where:
\begin{enumerate}
 \item For $1\leq i \leq n$, $x_i \in \mathcal P_\kappa(\kappa^{+\gamma_i})$, and $\kappa_i := x_i \cap \kappa$ is inaccessible.
 \item For $1 \leq i < n$, $x_i \subseteq x_{i+1}$, and $\kappa_{i+1} > |x_i|$.
  \item $f_{0} \in \col(\mu,\kappa)$, and $\ran(f_0) \subseteq \kappa_1$ if $x_1$ is defined.
 \item For $1\leq i< n$, $f_i \in \col(\kappa_i^{+\delta_i+2},\kappa_{i+1})$.
 \item $f_{n} \in \col(\kappa_n^{+\delta_n+2},\kappa)$.
 \item For $i > n$, $\dom F_i \in U_i$.
  \item For $i > n$ and $x \in \dom F_i$, $x \supseteq x_n$ and $\kappa_x := x \cap \kappa$ is an inaccessible cardinal greater than $|x_{n}| + \sup(\ran f_{n})$.
 \item For $i > n$ and $x \in \dom F_i$, $F_i(x) \in \mathrm{Col}(\kappa_x^{+\delta_i+2},\kappa)$.
 \item For $i > n$, $[F_i]_{U_i} \in K_i$.
\end{enumerate}
Suppose $p = \la f_{0},\dots,x_{n},f_{n},F_{n+1},\dots\ra$ and $q = \la f'_{0},\dots,x'_{m},f'_{m},F'_{m+1},\dots\ra.$  We say $q \leq p$ when:
\begin{enumerate}
\item $m \geq n$.
\item $f'_i \supseteq f_i$ for $i \leq n$, and $x_i = x'_i$ for $1 \leq i \leq n$.
\item For $n < i \leq m$, $x'_i \in \dom F_i$ and $f'_i \supseteq F_i(x'_i)$.
\item For $i> m$, $\dom F'_i \subseteq \dom F_i$, and $F'_i(x) \supseteq F_i(x)$ for $x \in \dom F'_i$.
\end{enumerate}
For $p$ as above, let $\stem(p) = \la f_{0},\dots,x_{n},f_{n} \ra$, and say the \emph{length} of $p$ is $n$.  (The stem of a length-0 condition is of the form $\la f_{0} \ra$.)  

\begin{lemma}
\label{gs1 factor}
Suppose $\mu,\vec\gamma,\vec\delta,\vec U,\vec K$ are as above, and $p = \la f_0,x_1,\dots,x_n,f_n \ra ^\frown \vec F$ is a condition of length $n>0$.  Then $\mathbb P(\mu,\vec\gamma,\vec\delta,\vec U,\vec K) \restriction p$ is canonically isomorphic to
$$\col(\mu,\kappa_1) \restriction f_0 \times\dots\times \col(\kappa_{n-1}^{+\delta_{n-1}+2},\kappa_n) \restriction f_{n-1} \times \mathbb P(\kappa_n^{+\delta_n+2},\vec\gamma',\vec\delta',\vec U',\vec K') \restriction \la f_{n} \ra ^\frown \vec F',$$
where for each sequence $s \in \{\vec\gamma,\vec\delta,\vec U,\vec K,\vec F\}$, $s'$ is the sequence such that $s'(m) = s(n+m)$ for $m \geq 1$.
\end{lemma}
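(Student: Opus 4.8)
The plan is to write the isomorphism down by hand and check it is a well-defined, order-preserving bijection with an order-preserving inverse; there is no genuine difficulty here, only bookkeeping, so the real task is to organize notation so that the bookkeeping is transparent. A preliminary observation is that the shifted parameters again satisfy the standing hypotheses, so the tail forcing $\mathbb P(\kappa_n^{+\delta_n+2},\vec\gamma',\vec\delta',\vec U',\vec K')$ is defined: $\vec\gamma'$ is strictly increasing with supremum still $\gamma$, $\vec\delta'$ is nondecreasing with $\delta'_1=\delta_{n+1}\ge\delta_1\ge\gamma$, the cardinal $\kappa_n^{+\delta_n+2}$ is regular and below $\kappa$, and $\kappa$ retains the relevant supercompactness with the generics $K_{n+i}$ already chosen.

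Next I would observe that by clause (2) of the order on $\mathbb P:=\mathbb P(\mu,\vec\gamma,\vec\delta,\vec U,\vec K)$, any $q\le p$ has its coordinates $x_1,\dots,x_n$ equal to those of $p$; so such a $q$ is exactly a sequence
\[
q=\la f'_0,x_1,f'_1,\dots,x_n,f'_n,x'_{n+1},f'_{n+1},\dots,x'_m,f'_m,F'_{m+1},\dots\ra
\]
with $m\ge n$, each $f'_i$ (for $i<n$) a condition of the corresponding collapse factor below $f_i$, and the tail $\la f'_n,x'_{n+1},f'_{n+1},\dots\ra$ a condition of the tail forcing below $\la f_n\ra^\frown\vec F'$. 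The key point is that, because $s'(m)=s(n+m)$ for each of the defining sequences $s$, clauses (1)--(9) for a condition of $\mathbb P$ \emph{restricted to indices $>n$}, and order-clauses (1)--(4) \emph{restricted to indices $>n$}, are verbatim the defining clauses of the tail forcing. So I define $\Phi(q)=(f'_0,\dots,f'_{n-1},\la f'_n,x'_{n+1},f'_{n+1},\dots\ra)$, and define $\Psi$ on the product by interleaving the given collapse conditions $g_0,\dots,g_{n-1}$ with $x_1,\dots,x_n$ taken from $p$ and then appending the given tail condition (whose first entry now plays the role of $f_n$). That $\Phi$ and $\Psi$ are mutually inverse is immediate, and order-preservation in both directions is just the remark that the order of $\mathbb P$ below $p$ compares the first $n$ collapse coordinates and the tail independently, which matches the product order together with the order of the tail forcing.

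The one place where the two descriptions are not literally identical, and hence the main thing to verify, is the junction at coordinate $n$. In $\stem(p)$ the component $f_n\in\col(\kappa_n^{+\delta_n+2},\kappa)$ may have range unbounded in $\kappa$, whereas once $p$ is extended to length $>n$ the enlargement $f'_n$ must have range below $\kappa_{n+1}$; but this is precisely the behaviour of the length-$0$ slot $f_0$ of $\mathbb P(\kappa_n^{+\delta_n+2},\dots)$ by clause (3), so the two agree. Similarly one checks that the requirement in clause (7) --- that for $i>n$ every $x\in\dom F_i$ has $\kappa_x$ inaccessible and $\kappa_x>|x_n|+\sup(\ran f_n)$ --- corresponds correctly: a condition of the tail forcing has $\dom F'_i\subseteq\dom F_i$, so the $\vec F$-bound is inherited, while the tail forcing separately imposes $\kappa_x>\sup(\ran f'_n)$; since $\kappa_x$ is inaccessible and both $|x_n|$ and $\sup(\ran f'_n)$ are below it, $\kappa_x>|x_n|+\sup(\ran f'_n)$ follows, and in particular the constraint $\kappa_{n+1}=x'_{n+1}\cap\kappa>|x_n|$ required of $q$ in $\mathbb P$ is automatic for any $x'_{n+1}\in\dom F_{n+1}$. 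These junction points are the only ones needing attention; everything else is a line-by-line match of the two lists of clauses, which I would leave to the reader.
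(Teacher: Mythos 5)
Your proposal is correct and is exactly the canonical argument the paper has in mind: the paper states Lemma \ref{gs1 factor} without proof, treating the factorization as immediate from the definitions, and your line-by-line matching of the clauses is the verification it leaves implicit. In particular, your treatment of the two junction points at coordinate $n$ (the range constraint on $f'_n$ versus the tail forcing's length-$0$ clause, and the observation that clause (7) for $p$ makes $\kappa_{n+1}>|x_n|$ and the bound $\kappa_x>|x_n|+\sup(\ran f'_n)$ automatic) is precisely where the only content lies, and you handle it correctly.
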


We say $q \leq^* p$ when $q \leq p$ and they have the same length.  If $q \leq p$ and $\stem(p)$ is an initial segment of $\stem(q)$, we say $q$ is an \emph{end-extension} of $p$, or $q \preceq p$.  Given a sequence $\vec F = \la F_i : 1 \leq i < \omega \ra$ such that $\la \emptyset \ra ^\frown \vec F$  is a condition of length 0, and another condition $p = \stem(p)^\frown \la H_i : n < i < \omega \ra$, define 
\begin{align*}
p \wedge \vec F :=  \stem(p)^\frown \la & \{ \la x,F_i(x) \cup H_i(x)\ra : x \in \dom F_i \cap \dom H_i \\
& \text{ and } F_i(x) \cup H_i(x) \text{ is a function}\} : n < i < \omega \ra.
\end{align*}
Note that $p \wedge \vec F$ is both $\preceq$ and  $\leq^* p$, but $p \wedge \vec F$ is not necessarily $\leq \la \emptyset \ra ^\frown \vec F$.  For a given stem $s$ and sequence $\vec F$ as above, we define $s \wedge \vec F = p \wedge \vec F$, where $p$ is the weakest condition with stem $s$.

It is easy to see that $\mathbb P(\mu,\vec \gamma,\vec \delta,\vec U,\vec K)$ is $\kappa^{+\gamma}$-centered, and a density argument shows that it forces all cardinals in $[\kappa,\kappa^{+\gamma}]$ to have countable cofinality. The fact that not more damage is done than intended is a consequence of the Prikry Property, which follows from a more basic combinatorial property.  If $\mathbb P$ is a partial order and $c : \mathbb P \to \{0,1,2\}$, we say $c$ is a \emph{decisive coloring} if whenever $c(p) >0$ and $q \leq p$, then $c(q) = c(p)$.

\begin{lemma}
\label{dc1}
Let $c$ be a decisive coloring of $\mathbb P(\mu,\vec \gamma,\vec \delta,\vec U,\vec K)$.
\begin{enumerate}
\item There is a sequence $\vec F$ such that for every condition $p$, every two $r,r' \preceq p \wedge \vec F$ of the same length have the same color.
\item For every condition $p$, there is $q \leq^* p$ such that every two  $r,r' \leq q$ of the same length have the same color.
\end{enumerate}
\end{lemma}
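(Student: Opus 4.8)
The plan is to prove (1) first by a fusion/diagonal-intersection argument in the direct-extension order, then derive (2) by applying (1) below every stem. For (1), fix a decisive coloring $c$. We build a decreasing $\leq^*$-sequence of length-$0$ conditions whose limit $\vec F$ has the required homogeneity. The idea is to handle one stem at a time. There are only $\kappa^{+\gamma}$ many possible stems (this uses $\kappa^{+\gamma}$-centeredness and that stems live in a set of this size), but more to the point, for a fixed length $n$ there are, below any fixed condition, only "boundedly many" relevant stem-shapes to consider at each coordinate; we process them by recursion on length, thinning the measure-one sets at each coordinate $i$ and strengthening the collapse-conditions $F_i(x)$ so that the color of an end-extension of length $n$ depends only on the stem already chosen, not on the choices made afterward. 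Concretely, given a stem $s$ of length $n$ and a sequence $\vec F$, the set of colors realized by conditions $r \preceq s \wedge \vec F$ of length exactly $n$ is $\{c(s \wedge \vec F)\}$ since no end-extension of length $n$ strictly below $s \wedge \vec F$ changes the stem; the real content is stabilizing the color as we pass to a single $\vec F$ that works simultaneously for \emph{all} stems and all lengths. For this one uses that $c$ is \emph{decisive}: once a condition gets a nonzero color, all its extensions inherit it, so the recursion on length only needs to "commit" finitely often along any branch.

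Here is the recursion in more detail. Enumerate as $\{s_\xi : \xi < \kappa^{+\gamma}\}$ all stems. Build $\leq^*$-decreasing $\vec F^\xi$. At stage $\xi$, look at $s_\xi$, say of length $n$. If $c(s_\xi \wedge \vec F^{<\xi}) \neq 0$, do nothing (decisiveness already pins down all its extensions). Otherwise, for each coordinate $i > n$ we must decide, for each $x \in \dom F^{<\xi}_i$, whether to strengthen $F^{<\xi}_i(x)$ in a way that forces a particular nonzero color on the corresponding one-step end-extension, or to shrink the domain; a normality/Rowbottom-type argument on the measure $U_i$ gives a measure-one subset on which the "color contributed by $x$" is constant, and we take that. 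Iterating over all $i > n$ and diagonalizing (via the $\wedge$ operation and $\mu$-closedness of $\leq^*$, noting $\mu < \kappa \le \kappa_i$ so the relevant products of collapse forcings are closed enough) yields $\vec F^\xi$. After $\kappa^{+\gamma}$ steps take the coordinatewise union of domains-restrictions and collapse-conditions — legitimate because $\leq^*$ is $\mu$-closed and because at each coordinate only boundedly many stems are "active" — to get $\vec F$. By construction, for every $p$ and every two $r, r' \preceq p \wedge \vec F$ of the same length $m$, both end-extend $s \wedge \vec F$ where $s = \stem(r) = \stem(r')$ has length $\le m$; unwinding, their colors agree with $c$ evaluated on a canonical length-$m$ end-extension of $p \wedge \vec F$ determined by $s$ alone, hence with each other.

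For (2): given $p$, let $n = \len(p)$ and write $p = \stem(p) {}^\frown \vec H$. Apply (1) to get $\vec F$, and set $q = p \wedge \vec F$. Then $q \le^* p$, and any $r, r' \le q$ of the same length $m$ are end-extensions of $q = \stem(p) {}^\frown (\vec H \wedge \vec F)$ whose stems extend $\stem(p)$; by (1) applied with the fixed condition $q$, all length-$m$ extensions of $q \wedge \vec F = q$ with the same stem have the same color, and since below a fixed stem-of-length-$m$ there is up to $\le^*$ essentially one such extension modulo further thinning, the colors of $r$ and $r'$ coincide. (One checks that "$r \le q$ of length $m$" forces the first $m$ coordinates of $r$ to form a stem extending $\stem(p)$, so the homogeneity from (1) applies.)

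I expect the main obstacle to be the bookkeeping in (1): arranging that a single $\vec F$ works for \emph{all} stems simultaneously, since strengthening $F_i(x)$ to fix the color for one stem must not be undone when we later process another stem that also uses coordinate $i$. The resolution is that decisiveness makes each "commitment" permanent — once $c$ jumps to a nonzero value on some $s \wedge \vec G$, no $\le$-extension can change it — so along any descending chain of refinements at a fixed coordinate only one genuine commitment ever happens, and the $\mu$-closure of $\le^*$ together with normality of each $U_i$ lets the $\kappa^{+\gamma}$-length diagonalization go through without collision. The strong-limit/supercompactness hypotheses on $\kappa$ are what guarantee $\kappa^{+\gamma}$-centeredness and hence that the set of stems has size $\le \kappa^{+\gamma}$, keeping the recursion length manageable.
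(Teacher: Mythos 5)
There are genuine gaps in both parts. For (1), the engine of the paper's proof is an induction on the \emph{length} of the end-extension: one fixes $n$, homogenizes the coloring of length-$n$ conditions (using the guiding generic $K_n$ in the ultrapower by $U_n$ to pick a single interleaved collapse value $f^*$ that works for all stems, then a Rowbottom/diagonal-intersection argument over $U_n$), and from this defines a \emph{derived} decisive coloring of the length-$(n-1)$ conditions to which the induction hypothesis is applied; finally the sequences $\vec F_n$ obtained for the different lengths are amalgamated using countable closure of the $K_m$'s. Your stem-by-stem recursion only stabilizes the behaviour of \emph{one-step} end-extensions at each stage ("the color contributed by $x$ at coordinate $i$"), and never sets up the derived coloring or any induction on the number of Prikry steps, so it does not show that two end-extensions of $p \wedge \vec F$ of the same length $m$ that differ at several intermediate coordinates (and in their upper parts) receive the same color. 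Moreover your claim that the colors realized by conditions $r \preceq s \wedge \vec F$ of length exactly $n$ form the singleton $\{c(s \wedge \vec F)\}$ "since no end-extension of length $n$ changes the stem" is false: such an $r$ may properly shrink the upper parts $F_i$, and decisiveness only prevents a \emph{positive} color from changing, so a condition of color $0$ can acquire a positive color after shrinking; arranging that this does not happen for the final $\vec F$ (by pre-selecting, for each stem, an upper part realizing a positive color when possible) is precisely part of the construction you are omitting. The cardinality bookkeeping you worry about is in fact the unproblematic part (for coordinate $i$ only the $\kappa^{+\gamma_{i-1}}$ many shorter stems matter, which is within the closure of $K_i$ and the normality of $U_i$); the missing content is the length induction.

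For (2), setting $q = p \wedge \vec F$ does not work, and your parenthetical check conflates the two orderings. Statement (1) homogenizes only \emph{end-extensions} ($\preceq$), i.e.\ conditions whose stem has $\stem(q)$ as an initial segment; a general $r \leq q$ of the same length may properly strengthen the collapse coordinates $f_i$ \emph{inside} the stem, and then $r \not\preceq q$, so (1) says nothing about it. Indeed $p \wedge \vec F$ can genuinely fail (2): it may be that all end-extensions of $\stem(p) {}^\frown \vec F$ to length $n$ have color $0$, while for some strengthening $s \leq \stem(p)$ of the collapse parts, end-extensions of $s {}^\frown \vec F$ to length $n$ have positive color; both kinds of conditions lie below $p \wedge \vec F$ and have the same length but different colors. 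This is why the paper's proof of (2) first passes to a strengthened stem $s \leq \stem(p)$ chosen so that some end-extension of $s {}^\frown \vec F$ attains a positive color $c^*$ at the \emph{least} possible length $n$, and then uses decisiveness together with this minimality to conclude that below $s {}^\frown \vec F$ every condition of length $< n$ has color $0$ and every condition of length $\geq n$ has color $c^*$. Some version of this stem-strengthening and minimality argument is indispensable and is absent from your proposal.
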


\begin{proof}
Let $\mathbb P = \mathbb P(\mu,\vec \gamma,\vec \delta,\vec U,\vec K).$  For (1), we prove the following claim by induction:  For all $n<\omega$ and all decisive colorings of the conditions of length $n$, there is $\vec F$ such that for all $m\leq n$ and every condition $p$ of length $m$, every two $r,r' \preceq p \wedge \vec F$ of length $n$ have the same color.  Suppose $n=0$ and $c$ is such a coloring.  For every $s \in \col(\mu,\kappa)$, choose if possible some $\vec F_s$ such that $c(\la s \ra ^\frown \vec F_s) >0$.  Using the closure of the higher collapses and diagonal intersections, we may select a single sequence $\vec F$ such that $\la s \ra \wedge \vec F \leq \la s \ra ^\frown \vec F_s$ for all $s$.  By decisiveness, $\vec F$ witnesses the claim for $n =0$.

Suppose the claim is true for $n-1$.  Let $c$ be any decisive coloring of the conditions of length $n$.  Using the closure of $\col(\kappa^{+\delta_n+2},j_{U_n}(\kappa))^{M_n}$, the genericity of $K_n$, and the decisiveness of $j_{U_n}(c)$, we can find a function $f^* \in K_n$ such that for every stem $s$ of length $n-1$, if there are some $g$ and $\vec F$ such that $g \supseteq f^*$ and $s ^\frown \la j_{U_n}[\kappa^{+\gamma_n}],g \ra ^\frown \vec F$ has color $>0$, then $s ^\frown \la j_{U_n}[\kappa^{+\gamma_n}],f^* \ra ^\frown \vec F$ already has this color.  If $F_n$ represents $f^*$, then for all stems $s$ of length $n-1$, there is $A_s \in U_n$ and a color $c_s <3$ such that for all $x \in A_s$, either there is $\vec F^{s,x} = \la F_{k}^{s,x} : n+1 \leq k < \omega \ra$ such that $s ^\frown \la x,F_n(x)  \ra ^\frown \vec F^{s,x}$ has color $c_s > 0$, or for all $x \in A_s$ and all $g \supseteq F_n(x)$, any $p$ of length $n$ with stem $s ^\frown  \la x,g \ra$ has color 0.  Let  $A$ be the diagonal intersection of the sets $A_s$.
Using the directed-closure of the filters $K_k$ and diagonal intersections, we may select a single sequence $\vec F$ that plays the role of $\vec F^{s,x}$ for all $s$ and $x$.  Putting $\vec F' = \la F_n \restriction A \ra ^\frown \vec F$, we have that for any condition $p$ of length $n-1$, all $q \preceq p \wedge \vec F'$ of length $n$ have the same color.  This defines a decisive coloring $c'$ of the conditions of length $n-1$ of the form $p \wedge \vec F'$, by coloring them whatever color an arbitrary length-$n$ end-extension receives.  By induction, there is $\vec F''$ such that for every $m \leq n-1$, for every condition $p$ of length $m$, every $q \preceq p \wedge \vec F''$ of length $n-1$ receives the same color under $c'$.  This means that every such $p \wedge \vec F''$ receives the same color under $c$ when end-extended to a condition of length $n$.

To finish the argument for (1), let $c$ be a decisive coloring of $\mathbb P$.  We have for each $n$ a sequence $\vec F_n$ such that the restriction of $c$ to conditions of length $n$ satisfies the inductive claim.  Using the countable closure of the filters $K_m$, we can find the desired $\vec F$ by taking a lower bound to all the conditions of the form $\la \emptyset \ra^\frown \vec F_n$.

For (2), let $\vec F$ be given by (1) and let $p \in \mathbb P$.  If there is $s \leq \stem(p)$ such that some end-extension of $s \wedge \vec F$ has color $>0$, then pick such an $s$ which achieves such a color $c^*$ by end-extending to length $n$, where $n$ is as small as possible.  Then every $r,r' \leq s ^\frown \vec F$ have color 0 if their length is $<n$, and color $c^*$ otherwise.
\end{proof}

\begin{corollary}
$\la \mathbb P(\mu,\vec \gamma,\vec \delta,\vec U,\vec K), \leq, \leq^* \ra$ is a Prikry-type forcing.  
\end{corollary}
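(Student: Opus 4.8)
The plan is to deduce this from part (2) of Lemma \ref{dc1} by the standard argument (cf.\ \cite{GitikHandbook}); the Prikry property is the only non-trivial clause in the definition of a Prikry-type forcing, since $\leq$ and $\leq^*$ are visibly partial orders and $q \leq^* p$ was \emph{defined} to entail $q \leq p$. So I would fix a condition $p$ and a sentence $\sigma$ in the forcing language of $\la \mathbb P, \leq \ra$, where $\mathbb P = \mathbb P(\mu,\vec\gamma,\vec\delta,\vec U,\vec K)$, and define $c \colon \mathbb P \to \{0,1,2\}$ by $c(r) = 1$ if $r \Vdash \sigma$, $c(r) = 2$ if $r \Vdash \neg\sigma$, and $c(r) = 0$ otherwise. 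No condition forces both $\sigma$ and $\neg\sigma$, so $c$ is well defined, and it is a decisive coloring: if $c(r) > 0$ and $r' \leq r$, then $r'$ forces the same one of $\sigma,\neg\sigma$ that $r$ does, so $c(r') = c(r)$.

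Next I would apply Lemma \ref{dc1}(2) to $c$ and $p$ to obtain $q \leq^* p$ such that any two conditions $\leq q$ of the same length receive the same color under $c$, and claim that $q$ decides $\sigma$. If not, then since $q \not\Vdash \sigma$ and $q \not\Vdash \neg\sigma$ there are $r_1, r_2 \leq q$ with $r_1 \Vdash \sigma$ and $r_2 \Vdash \neg\sigma$, so $c(r_1) = 1 \neq 2 = c(r_2)$; by homogeneity of $q$ these cannot have the same length, so say $\len(r_1) < \len(r_2) =: m$ (the reverse inequality being symmetric). I would then end-extend $r_1$ to a condition $\bar r_1 \preceq r_1$ of length $m$: this is possible because every condition admits end-extensions of every prescribed finite length — a routine density argument, choosing the new $x_i$ from the relevant $U_i$-measure-one subsets of the $\dom F_i$ so as to meet the side conditions in the definition of a condition, setting $f_i := F_i(x_i)$, and shrinking the tail domains as needed. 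Since $\bar r_1 \leq r_1 \leq q$ we get $\bar r_1 \Vdash \sigma$, i.e.\ $c(\bar r_1) = 1$, while $\len(\bar r_1) = m = \len(r_2)$ and $c(r_2) = 2$, contradicting the homogeneity of $q$. Hence $q \leq^* p$ decides $\sigma$, as required.

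In short, all the genuine work has been done in Lemma \ref{dc1}, and this corollary is a routine deduction. The one point that deserves a moment's attention — and the closest thing to an obstacle — is the matching of lengths by end-extension, which is exactly what makes the colors at different lengths comparable; but the underlying fact, that $\mathbb P$ is non-degenerate at every finite length, is entirely elementary and is in effect already invoked in the argument that $\mathbb P$ forces every cardinal in $[\kappa,\kappa^{+\gamma}]$ to have cofinality $\omega$.
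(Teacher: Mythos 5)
Your proposal is correct and follows essentially the same route as the paper: the same coloring (0 for undecided, 1 for forcing $\sigma$, 2 for forcing $\neg\sigma$), an application of Lemma \ref{dc1}(2), and the observation that an undecided $q$ would yield two same-length extensions forcing opposite decisions. The paper leaves the length-matching step implicit, whereas you spell out the end-extension density argument; that is a fair (and correct) filling-in of a detail rather than a different proof.
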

\begin{proof}
If $\sigma$ is a sentence in the forcing language of $\mathbb P(\mu,\vec \gamma,\vec \delta,\vec U,\vec K)$, then we color a condition 0 if it does not decide $\sigma$, 1 if it forces $\sigma$, and 2 if it forces $\neg\sigma$.  This is decisive, so for every $p$, there is $q \leq^* p$ such that all extensions of $q$ of the same length have the same color.  If $q$ does not decide $\sigma$, then there are $r,r' \leq q$ of the same length forcing opposite decisions about $\sigma$, contradicting the property of $q$.
\end{proof}

\underline{Case 2 (of Theorem \ref{cc any countable cof})}: $\tau(\alpha) > \tau(\beta) = \gamma.$  Again, we have $\iota(\beta) \geq \alpha$, so let $\xi = \iota(\beta) - \alpha$.
Let $A \subseteq \kappa$ be given by Lemma \ref{lemma: reflecting cc} (with respect to $\gamma)$.  Find $\nu < \mu$ in $A$ such that $\nu$ is $\nu^{+\gamma+1}$-supercompact.  Let $G \subseteq \col(\nu^{+\gamma+\xi+2},\mu)$ be generic over $V$.  In $V[G]$, $(\mu^{+\gamma+1},\mu^{+\gamma}) \chang_{\nu^{+\gamma}}(\nu^{+\gamma+1},\nu^{+\gamma})$ holds, and $\nu$ is still  $\nu^{+\gamma+1}$-supercompact.  Let $\vec \gamma = \la \gamma_i : 1 \leq i < \omega \ra$ be an increasing sequence converging to $\gamma$.  Since $\tau(\alpha) > \gamma$, we may find a nondecreasing sequence $\vec \alpha = \la \alpha_i : 1 \leq i < \omega \ra$ such that $\gamma \leq \alpha_1$ and $\sum_i \alpha_i = \alpha$.

Since $\nu$ is $\nu^{+\gamma+1}$-supercompact, we can construct $\vec U$ and $\vec K$ as above according to the sequences $\vec\gamma,\vec\alpha$.
Let $H \subseteq \mathbb P(\omega,\vec\gamma,\vec\alpha,\vec U,\vec K)$ be generic over $V[G]$.  Since this forcing is $\nu^{+\gamma+1}$-c.c., Chang's Conjecture is preserved.  In the extension, $\nu = \aleph_\alpha$, $(\nu^{+\gamma+1})^{V[G]} = (\nu^+)^{V[G][H]}$, and $\mu^{+\gamma} = \aleph_{\alpha+\xi+\gamma} =\aleph_\beta$.

\hspace{1mm}

The third case requires a more detailed analysis of the Gitik-Sharon forcing.
Suppose $\mathbb P(\mu,\vec \gamma,\vec \delta,\vec U,\vec K)$ is built as above, around a sufficiently supercompact $\kappa$.  Associated to a generic filter $G$ are sequences $\la x_n : 1\leq n <\omega \ra$, and $\la C_n : n < \omega \ra$ determined by the stems of all conditions in $G$, where $C_{0}$ is generic for $\col(\mu,\kappa_1)$, and for $n\geq 1$, $C_n$ is generic for $\col(\kappa_n^{+\delta_n+2},\kappa_{n+1})$ and $x_n \in \mathcal P_\kappa(\kappa^{+\gamma_n})$.  From this sequence, we can recover $G$ by taking all conditions $\la f_{0},x_1,f_1,\dots,x_n,f_n,F_{n+1},\dots\ra$ such that:
\begin{enumerate}
\item $\la x_i : 1\leq i \leq n \ra$ is an initial segment of $\la x_i : 1\leq i < \omega \ra$.
\item For $i \leq n$, $f_i \in C_i$.
\item For $i > n$, $x_i \in \dom F_i$, and $F_i(x_i) \in C_i$.
\end{enumerate}
The collection of such conditions is a filter containing $G$, so it must equal $G$ by the maximality of generic filters.

\begin{lemma}
\label{autoproduct}
Let $V$ be a model of set theory, and let $\la \mathbb P_i,\kappa_i,G_i : i < n \ra$ be such that:
\begin{enumerate}
\item $\la \kappa_i : i < n \ra$ is an increasing sequence of regular cardinals in $V$.
\item For each $i$, $\mathbb P_i$ is a partial order in $V$ that is $(\kappa_i,\kappa_i)$-distributive and of size $\leq \kappa_{i+1}$.
\item For each $i$, $G_i$ is $\mathbb P_i$-generic over $V$.
\end{enumerate}
Then $\prod_{i<n}G_i$ is $\prod_{i<n}\mathbb P_i$-generic over $V$.
\end{lemma}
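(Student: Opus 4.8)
The plan is to induct on $n$. The cases $n=0$ and $n=1$ are immediate: the empty product is generic trivially, and for $n=1$ we have $\prod_{i<1}G_i = G_0$, which is $\mathbb P_0$-generic over $V$ by hypothesis. So suppose $n \geq 2$ and the lemma holds for $n-1$. Write $\mathbb R = \prod_{i<n-1}\mathbb P_i$, so that, reordering coordinates, $\prod_{i<n}\mathbb P_i$ is isomorphic to $\mathbb R \times \mathbb P_{n-1}$ and $\prod_{i<n}G_i$ corresponds to $\left(\prod_{i<n-1}G_i\right) \times G_{n-1}$. By the standard product forcing lemma, it suffices to show that (a) $G_{n-1}$ is $\mathbb P_{n-1}$-generic over $V$, and (b) $\prod_{i<n-1}G_i$ is $\mathbb R$-generic over $V[G_{n-1}]$. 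Part (a) is one of the hypotheses. By the inductive hypothesis, $\prod_{i<n-1}G_i$ is $\mathbb R$-generic over $V$, so for (b) it is enough to see that every dense subset of $\mathbb R$ lying in $V[G_{n-1}]$ already lies in $V$.

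This is the one place the distributivity hypothesis is used. Since $|\mathbb P_i| \leq \kappa_{i+1} \leq \kappa_{n-1}$ for each $i \leq n-2$ (here $\kappa_{i+1}$ is defined, as $i+1 \leq n-1$; note the size bound on $\mathbb P_{n-1}$ itself plays no role) and there are only finitely many factors, $\mathbb R$ has size at most $\kappa_{n-1}$ in $V$, so fix in $V$ an injection $e : \mathbb R \to \kappa_{n-1}$. If $A \subseteq \mathbb R$ were a subset lying in $V[G_{n-1}]$ but not in $V$, then, transporting via $e$, the characteristic function of $e[A]$ would be a function $\kappa_{n-1} \to 2 \subseteq \kappa_{n-1}$ in $V[G_{n-1}]$ but not in $V$, contradicting the $(\kappa_{n-1},\kappa_{n-1})$-distributivity of $\mathbb P_{n-1}$. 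Hence $\mathbb P_{n-1}$ adds no new subsets of $\mathbb R$, in particular no new dense subsets, so (b) holds and the induction goes through.

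The argument is routine once the induction is set up to remove factors \emph{from the top} rather than from the bottom: the essential asymmetry is that $\mathbb P_{n-1}$ is ``small'' only relative to $\kappa_n$ but is distributive at $\kappa_{n-1}$, whereas $\mathbb R$ is genuinely small relative to $\kappa_{n-1}$. So one must carry the genericity of the small product $\prod_{i<n-1}G_i$ across the distributive extension $V[G_{n-1}]$, rather than attempting to show $G_{n-1}$ is generic over $V[\prod_{i<n-1}G_i]$ --- which would fail in general, since $\prod_{i<n-1}\mathbb P_i$ may well collapse cardinals and destroy the relevant distributivity. The only mild bookkeeping point is the identification of $\prod_{i<n}\mathbb P_i$ with $\mathbb R \times \mathbb P_{n-1}$ together with the matching of the generic filters, which is straightforward.
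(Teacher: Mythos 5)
Your proof is correct and follows essentially the same route as the paper: induct, peel off the top factor $\mathbb P_{n-1}$, use its $(\kappa_{n-1},\kappa_{n-1})$-distributivity together with the fact that $\prod_{i<n-1}\mathbb P_i$ has size at most $\kappa_{n-1}$ to see that forcing with $\mathbb P_{n-1}$ adds no new dense subsets (the paper phrases it as no new antichains) of the lower product, and conclude by the product lemma. The remark that the size bound on $\mathbb P_{n-1}$ itself is not needed is accurate and matches how the lemma is applied.
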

\begin{proof}
We show this by induction on $m \leq n$.  Suppose that $\prod_{i<m}G_i$ is $\prod_{i<m}\mathbb P_i$-generic over $V$.  Since $\mathbb P_m$ is $(\kappa_m,\kappa_m)$-distributive, forcing with it adds no antichains to $\prod_{i<m}\mathbb P_i$.  Thus $\prod_{i<m}G_i$ is $\prod_{i<m}\mathbb P_i$-generic over $V[G_m]$, and so $\prod_{i\leq m}G_i$ is $\prod_{i\leq m}\mathbb P_i$-generic over $V$.
\end{proof}

\begin{lemma}
\label{genchar}
$(\vec x,\vec C)$ generates a generic for $\mathbb P(\mu,\vec \gamma,\vec \delta,\vec U,\vec K)$ over $V$ iff the following hold:
\begin{enumerate}
\item For every sequence $\vec F = \la F_n : 1\leq n < \omega \ra$ such that $\la \emptyset \ra ^\frown \vec F$ is a condition of length 0, there is $m$ such that for all $n \geq m$, $x_n \in \dom F_n$ and $F_n(x_n) \in C_n$.
\item $C_{0}$ is generic for $\col(\mu,\kappa_1)$, and $C_n$ is generic for $\col(\kappa_n^{+\delta_n+2},\kappa_{n+1})$ for all $n>0$.
\end{enumerate}
\end{lemma}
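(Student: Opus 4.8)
The plan is to treat this as a Mathias-style criterion for genericity and to prove the two implications separately. Throughout write $\mathbb P = \mathbb P(\mu,\vec\gamma,\vec\delta,\vec U,\vec K)$ and let $G(\vec x,\vec C)$ be the set of conditions recovered from $(\vec x,\vec C)$ by the procedure described just above the lemma. That procedure is ``absorbing'' (this is the maximality remark preceding the lemma), so $G(\vec x,\vec C)$ is always a filter: it is visibly upward closed, and any two members have a common lower bound in it. Hence in both directions only genericity is at issue, and since $G(\vec x,\vec C)$ is a filter it is enough to meet every dense \emph{open} subset of $\mathbb P$ lying in $V$; it is also convenient, and routine, that $G(\vec x,\vec C)$ contains conditions of every length.

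For the forward implication, assume $G := G(\vec x,\vec C)$ is generic. Condition (2) is a density computation: the conditions of length $\ge n$ are dense in $\mathbb P$, and below any such condition Lemma \ref{gs1 factor} displays $\col(\kappa_n^{+\delta_n+2},\kappa_{n+1})$ (or $\col(\mu,\kappa_1)$ when $n=0$) as a direct factor, so for each $V$-dense $E$ in that collapse the conditions $q$ with $\len(q)\ge n$ and $f^q_n\in E$ are dense; meeting them shows $C_n$ is $V$-generic. For (1), given $\vec F$ with $\langle\emptyset\rangle^\frown\vec F$ a length-$0$ condition, the set $D_{\vec F}$ of $q$ whose tail is pointwise $\le\vec F$ (precisely: $\dom F^q_n\subseteq\dom F_n$ and $F^q_n(x)\supseteq F_n(x)$ for $x\in\dom F^q_n$, for all $n>\len(q)$) lies in $V$ and is $\le^*$-dense, since $p\wedge\vec F\le^*p$ and $p\wedge\vec F\in D_{\vec F}$ by the definition of $\wedge$. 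Picking $q\in G\cap D_{\vec F}$ and $m=\len(q)+1$, the recovery description of $G$ gives $x_n\in\dom F^q_n$ and $F^q_n(x_n)\in C_n$ for $n\ge m$, and then $q\in D_{\vec F}$ gives $x_n\in\dom F_n$ and $F^q_n(x_n)\supseteq F_n(x_n)$, so $F_n(x_n)\in C_n$ because $C_n$ is a filter.

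For the converse, assume (1) and (2), put $G := G(\vec x,\vec C)$, and let $D\subseteq\mathbb P$ be dense open, $D\in V$. Color $c(p)=1$ iff $p\in D$; this is decisive since $D$ is downward closed. By Lemma \ref{dc1}(1) fix $\vec F$ (automatically with $\langle\emptyset\rangle^\frown\vec F$ a condition) so that for every $p$ all end-extensions of $p\wedge\vec F$ of a given length get one color, and by (1) fix $m$ with $x_n\in\dom F_n$ and $F_n(x_n)\in C_n$ for $n\ge m$. The crux is to unwind Lemma \ref{dc1}(1) into the following: there is a function $c^\star$ such that whenever $q\in G$ has length $\ge m$, tail $\le\vec F$, and is ``$\vec F$-compatible beyond block $m$'' (its Prikry points past block $m$ lie in the relevant $\dom F_n$ and its collapse parts there extend the relevant $F_n$-values), then $c(q)=c^\star(\len(q))$; here one uses Lemma \ref{dc1}(1) to kill dependence on everything past block $m$, and then that the $C_n$ are filters, plus decisiveness of $c$, to kill dependence on the collapse tuple in the first $m$ blocks. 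Every condition of $G$ of length $\ge m$ has an extension in $G$ of this special form (strengthen the collapse parts past block $m$ inside the $C_n$'s, past the relevant $F_n$-values, then apply $\wedge\vec F$), so by decisiveness $G\cap D\neq\emptyset$ holds iff $c^\star(\ell)=1$ for some $\ell$. Finally, let $E$ be the set of collapse tuples $\vec g$ over the first $m$ blocks such that, for some $\ell$, some $\vec F$-compatible length-$\ell$ end-extension of the condition with stem $\langle\vec g,\vec x\restriction m\rangle$ and tail $\vec F$ lies in $D$; density of $D$ below the conditions $\langle\vec g,\vec x\restriction m\rangle^\frown\vec F$ makes $E$ dense open in $\col(\mu,\kappa_1)\times\cdots\times\col(\kappa_{m-1}^{+\delta_{m-1}+2},\kappa_m)\times\col(\kappa_m^{+\delta_m+2},\kappa_{m+1})$, and by Lemma \ref{autoproduct} the product $C_0\times\cdots\times C_m$ is $V$-generic for it, so it contains a generic-shaped $\vec f^\star\in E$, which witnesses $c^\star(\ell)=1$ for some $\ell$. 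This produces the desired condition in $G\cap D$.

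The main obstacle is exactly this reconciliation. Lemma \ref{dc1} controls combinatorics living on the normal measures $\vec U$ and is stated via the end-extension order, which may not strengthen the collapse coordinates of a stem; but density of $D$ only hands back $\le$-extensions, which typically do strengthen those coordinates. Homogeneity of the collapses does not repair this on its own, since $D\in V$ need not be invariant; instead one must push the strengthenings into the finitely many collapse coordinates peeled off by Lemma \ref{gs1 factor} and exploit that those are genuinely $V$-generic --- hypothesis (2), via the product-genericity of Lemma \ref{autoproduct} --- while hypothesis (1) guarantees that on the Prikry side the generic points have already entered the measure-one sets supplied by Lemma \ref{dc1}, removing any ``bad initial segment'' past block $m$ so that Lemma \ref{dc1}(1) really reduces the color to a function of the length. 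Keeping the block indices and the distinction between $\le$, $\le^*$ and the end-extension order straight through all of this is the bulk of the work, though each individual step is routine.
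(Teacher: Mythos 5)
Your converse follows the paper's route in outline: color conditions by membership in the dense open $D$, take the homogeneity sequence $\vec F$ from Lemma \ref{dc1}(1), fix $m$ by hypothesis (1), use hypothesis (2) together with Lemma \ref{autoproduct} to make $C_0\times\cdots\times C_{m-1}$ generic for the finite product of collapses and thereby catch, inside that product, a tuple below which some end-extension with tail $\vec F$ enters $D$, and finally use the homogeneity of $\vec F$ to replace the arbitrary points and collapse parts beyond block $m$ by $x_n$ and $F_n(x_n)$, landing in $G\cap D$. (Your forward direction is also fine; the paper dismisses it as clear.)

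But the step you call the crux is false as stated. You claim a function $c^\star$ of the length alone such that every $q\in G$ of length $\ge m$, with tail $\le\vec F$ and $\vec F$-compatible beyond block $m$, satisfies $c(q)=c^\star(\len(q))$, and you justify independence from the collapse tuple on the first $m$ blocks by ``the $C_n$ are filters plus decisiveness.'' Decisiveness only propagates a nonzero color \emph{downward}: if $q,q'$ differ only in $f_0$ and have a common refinement $r$ with coordinates still in the $C_n$'s, then $c(q)=1$ gives $c(r)=1$ but says nothing about $c(q')$. Concretely, take $D=\{p:\xi\in\dom f_0^p\}$ for a fixed $\xi<\mu$: this is dense open, and for every length $\ell\ge m$ the filter $G$ contains special conditions of length $\ell$ both inside and outside $D$ (since $C_0$ contains the empty condition as well as, by genericity, conditions defined at $\xi$), so no such length-only $c^\star$ exists. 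What Lemma \ref{dc1}(1) actually gives is that the color of end-extensions of $p\wedge\vec F$ of a fixed length is determined by $p$ --- in particular by its stem, including the low collapse coordinates --- together with the length. Fortunately your argument never needs the stronger claim: the tuple $\vec f^\star$ you extract from $E$ lies in $C_0\times\cdots\times C_{m-1}$, so you can use that very tuple to build the element of $G$, and then the stem-dependent homogeneity transfers membership in $D$ from the extension witnessing $\vec f^\star\in E$ to the condition with the same low stem, points $x_n$, and collapse parts $F_n(x_n)$, which lies in $G$ by the choice of $m$. Replacing your $c^\star(\ell)$ by a color depending on the stem over the first $m$ blocks as well as the length (or simply deleting the $c^\star$ detour) repairs the proof, and the result is then exactly the paper's argument.
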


\begin{proof}
The forward direction is clear.  For the reverse direction, let $D \in V$ be a dense open subset of $\mathbb P = \mathbb P(\mu,\vec \gamma,\vec \delta,\vec U,\vec K)$, and let $G$ be the filter generated by $(\vec x,\vec C)$.  Let $c : \mathbb P \to 2$ be defined by $c(p) = 0$ if $p \notin D$ and $c(p) = 1$ otherwise.  This is decisive, so let $\vec F$ be given by Lemma \ref{dc1}.  Let $m$ be given by (1).

Consider the condition $p = \la \emptyset, x_1,\emptyset,\dots,x_{m-1},\emptyset \ra ^\frown \la F_i : m \leq i < \omega \ra$.  Let $D' = \{ q \in D : q \leq p \}$.  $D'$ projects to a dense subset of $\col(\mu,\kappa_1) \times  \col(\kappa_1^{+\delta_1+2},\kappa_{2}) \times \dots \times \col(\kappa_{m-1}^{+\delta_{m-1}+2},\kappa_{m})$. 
By (2) and Lemma \ref{autoproduct}, there is a sequence $\la f_i : i < m \ra$ that is in the projection of $D'$ intersected with $C_{0} \times \dots \times C_{m-1}$.  Thus there is some condition of the form 
$$\la f_{0},x_1,f_1,\dots,x_{m-1},f_{m-1},y_m,f_m,\dots,y_n,f_n,F'_{n+1},\dots\ra$$
that is in $D'$.  But by the homogeneity property of $\vec F$, we also have that
$$\la x_0,f_0,\dots,x_{m-1},f_{m-1},x_m,F_m(x_m),\dots,x_n,F_n(x_n),F_{n+1},\dots\ra \in D.$$
Therefore, $D \cap G \not=\emptyset$.
\end{proof}

\underline{Case 3 (of Theorem \ref{cc any countable cof})}: $0<\tau(\alpha) = \gamma < \tau(\beta).$  Let $\delta = \beta - \iota(\alpha)$.  We can find a nondecreasing sequence $\vec \delta = \la \delta_i : 1 \leq i < \omega \ra$ such that $\delta_1 \geq \gamma$ and $\sum_i \delta_i = \delta$.   Let $\vec \gamma = \la \gamma_i : 1 \leq i < \omega \ra$ be an increasing sequence converging to $\gamma$.  Let $j$ be an embedding witnessing that $\kappa$ is $\kappa^{+\gamma+1}$-supercompact, and let $A \subseteq \kappa$ be given by Lemma \ref{lemma: reflecting cc} (with respect to $\gamma)$.  For each $n \geq 1$, let $U_n$ be a $\kappa$-complete normal measure on $\p_\kappa(\kappa^{+\gamma_n})$ derived from $j$, so that $A$ is in the projection of each $U_n$ to $\kappa$. Let $\mu = \aleph_{\iota(\alpha)+1}$, and let us force with $\mathbb P = \mathbb P(\mu,\vec\gamma,\vec\delta,\vec U,\vec K)$ for where $\vec K$ is a sequence of filters as in the construction.

Let $p_0$ be a condition of length 0 forcing every Prikry point to be in $A$.  Let $p_1 \leq p_0$ be a condition of length 1 deciding the statement $\sigma :=$ ``$(\kappa^+,\kappa) \chang (\mu^{+\gamma+1},\mu^{+\gamma})$.''  We claim $p_1 \Vdash \sigma$.

Let us define an iteration of ultrapowers.  Let $N_1 = V$.  Given a commuting system of elementary embeddings $j_{m,m'} : N_m \to N_{m'}$ for $1\leq m \leq m' \leq n$, let $j_{n,n+1} : N_n \to \Ult(N_n,j_{1,n}(U_{n+1})) = N_{n+1}$ be the ultrapower embedding, and let $j_{m,n+1} = j_{n,n+1} \circ j_{m,n}$ for $1\leq m < n$.  For $1\leq n < \omega$, let $j_{n,\omega} : N_n \to N_\omega$ be the direct limit embedding.  
$N_\omega$ is well-founded, and thus can be identified with a transitive class, because of the following generalization of a well-known theorem of Gaifman (see \cite{Steel2016}). 

\begin{fact}\label{fact: gaifman}
If $\mathcal E$ is a set of countably complete ultrafilters, and $j_{\alpha,\beta} : N_\alpha \to N_\beta$, $\alpha < \beta \leq \theta$, is a system of elementary embeddings defined by taking at each $\alpha<\theta$ the ultrapower map $j_{\alpha,\alpha+1} : N_\alpha \to \Ult(N_\alpha,U) = N_{\alpha+1}$ for some $U \in j_{0,\alpha}(\mathcal E)$, and taking direct limits at limit stages, then each $N_\alpha$ is well-founded.
\end{fact}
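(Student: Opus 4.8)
The plan is to run the argument behind Gaifman's theorem for a single measure, in the generalized form of \cite{Steel2016}: although the ultrafilters used along the iteration are only \emph{internally} countably complete in the relevant $N_\alpha$, the hypothesis that they all come from images of a fixed \emph{set} $\mathcal{E}$ of genuinely countably complete ultrafilters is exactly what lets one push through the usual diagonalization.

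First I would reduce to iterates of a set. Fix an inaccessible $\lambda$ with $\mathcal{E}\in V_\lambda$ and each $U\in\mathcal{E}$ an ultrafilter on a set of rank below $\lambda$; then $U$ is countably complete in $V$ iff it is so in $V_\lambda$, since a failure of countable completeness is a countable sequence of subsets of the underlying set and hence lies in $V_\lambda$. The given iteration of $V$ restricts to an iteration of $\langle V_\lambda,\in\rangle$ in which the $\alpha$-th model is $(V_{j_{0\alpha}(\lambda)})^{N_\alpha}$ and the ultrafilters are unchanged; as $j_{0\alpha}$ is cofinal into the ordinals of $N_\alpha$, $N_\alpha$ is well-founded iff this restricted model is well-founded for all such $\lambda$. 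So it suffices to prove: if $M_0$ is a transitive model of \ZFC with $\mathcal{E}\in M_0$ a set of ultrafilters that are countably complete in $V$, and $\langle M_\alpha,i_{\alpha\beta},U_\alpha:\alpha\le\theta\rangle$ is a linear iteration (ultrapowers at successors, direct limits at limits) with $U_\alpha\in i_{0\alpha}(\mathcal{E})$ throughout, then each $M_\alpha$ is well-founded.

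Assume not, and fix an $\in$-descending sequence $\langle a_n:n<\omega\rangle$ in some ill-founded $M_\theta$. Using the standard generating-sequence representation — every element of $M_\theta$ has the form $i_{0\theta}(F)(c_{\gamma_1},\dots,c_{\gamma_k})$ with $F\in M_0$ and $c_{\gamma_j}$ the canonical generator from stage $\gamma_j<\theta$ — the chain $\langle a_n\rangle$ involves only countably many stages, and, tracing each ultrafilter occurring there back through the iteration to a member of $\mathcal{E}$ as in \cite{Steel2016}, one reorganizes the situation into a length-$\omega$ iteration of $M_0$ using single ultrapower steps, $M_0=P_0\xrightarrow{k_0}P_1\xrightarrow{k_1}P_2\to\cdots\to P_\omega$ with $k_m=j_{U^*_m}:P_m\to\Ult(P_m,U^*_m)=P_{m+1}$ and $U^*_m\in k_{0m}(\mathcal{E})$, such that $P_\omega$ remains ill-founded. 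Now an induction on $m$ gives $P_m=\Ult(M_0,V_m)$, where $V_m\in M_0$ is the $m$-fold iterated sum of the ultrafilters underlying $U^*_0,\dots,U^*_{m-1}$; the key point is that $V_m$ is countably complete \emph{in $V$}, because the summands are members of images of $\mathcal{E}$, each countably complete in $V$, and a sum $\sum_U\langle W_x:x\rangle$ of $V$-countably-complete ultrafilters along a $V$-countably-complete $U$ is again $V$-countably-complete: given countably many null sets $A_\ell$, pick $x$ in the countable intersection of the $U$-large sets $\{x:\{y:(x,y)\in A_\ell\}\notin W_x\}$, then pick $y$ in the countable intersection of the resulting $W_x$-large sets.

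Finally, the descending chain in $P_\omega=\varinjlim_m\Ult(M_0,V_m)$ is carried by functions $f_m\in M_0$ together with sets $D_m\in V_{m+1}$ recording that $f_{m+1}$ is $\in$-below $f_m$ for $V_{m+1}$-almost all arguments (with $f_{m+1}$ reading off one more coordinate than $f_m$). Build a thread $\langle x^*_m:m<\omega\rangle$ recursively: having chosen $x^*_0,\dots,x^*_{m-1}$, each of the countably many conditions imposed by $D_m,D_{m+1},\dots$ on the $m$-th coordinate cuts out a set that is large for the relevant $m$-th-coordinate ultrafilter, which is a member of $\mathcal{E}$ and hence countably complete in $V$; let $x^*_m$ lie in their intersection. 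Then $(x^*_0,\dots,x^*_m)\in D_m$ for every $m$, so $f_1(x^*_0)\ni f_2(x^*_0,x^*_1)\ni f_3(x^*_0,x^*_1,x^*_2)\ni\cdots$ is a genuine infinite $\in$-descending sequence in the set $M_0$, contradicting the well-foundedness of $\in$. The main obstacle is the reorganization step of the third paragraph: the ambient ultrafilters $U_\alpha$ are only internally countably complete, and everything hinges on tracing them back to the fixed set $\mathcal{E}$ of genuinely countably complete ultrafilters so that the sums $V_m$ stay countably complete in $V$ — this bookkeeping is precisely the content of \cite{Steel2016}.
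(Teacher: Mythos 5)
The paper does not actually prove this Fact: it is quoted with a citation to \cite{Steel2016}, so there is no internal proof to compare against. Your sketch follows the standard Gaifman-style argument that the cited source gives: represent elements of the direct limit by functions in the ground model and finitely many generators, convert the membership relations into largeness statements for iterated sums of ultrafilters lying in the ground model, observe that a sum of genuinely countably complete ultrafilters along a genuinely countably complete ultrafilter is countably complete (your argument for this closure property is correct), and then build a thread realizing all the largeness conditions to produce an $\in$-descending chain of sets, contradicting Foundation. The closure lemma and the diagonal thread construction are right.

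However, as a self-contained proof there is a gap exactly where you flag ``the main obstacle,'' and your description of that step is also not quite accurate. The stages you must keep are not only those carrying the generators of the descending sequence $\la a_n : n<\omega\ra$: since the ultrafilter used at stage $\gamma$ is merely an element of $j_{0,\gamma}(\mathcal E)$, it is itself represented over $N_0$ only via generators from finitely many earlier stages, so you must close the stage set under these supports. The resulting set is countable, but its order type is an arbitrary countable ordinal, so you do not in general obtain a literal length-$\omega$ iteration of single ultrapowers of $M_0$; what you obtain is a coherent tower of iterated-sum ultrafilters in $M_0$ indexed along a countable ordinal, and the thread must be built by a recursion along that order type (this does work, since once the earlier coordinates are fixed the fiber ultrafilter at the next coordinate is a genuine member of $\mathcal E$ and only countably many constraints mention each coordinate --- but this is precisely the bookkeeping you defer wholesale to \cite{Steel2016}). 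Separately, your preliminary reduction ``fix an inaccessible $\lambda$'' is not a \ZFC move, and it is unnecessary: either replace $V_\lambda$ by $H_\lambda$ for a large regular $\lambda$ (which is closed under the relevant functions), or skip the reduction entirely and run the argument with functions in $V$, deriving an $\in$-descending sequence of sets and contradicting Foundation in $V$ directly.
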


Let $\stem(p_1) = \la f_{0},x_1,f_1 \ra$, and let $C_{0} \times C_1 \subseteq \col(\mu,\kappa_1) \times \col(\kappa_1^{+\delta_1+2},\kappa)$ be a filter that contains $\la f_{0},f_1\ra$ and is generic over $V$.  For $n>1$, let $y_n = j_{n-1,n}[ j_{1,n-1}(\kappa^{+\gamma_n})]$, and let $x_n = j_{n,\omega}(y_n)$, and let $C_n = j_{1,n-1}(K_n)$.

\begin{claim}\label{claim: generic over ultrapower, simple}
$\la x_n : 1 \leq n < \omega \ra$ and $\la C_n : n < \omega \ra$ together generate a generic filter for $j_{1,\omega}(\mathbb P)$ over $N_\omega$.
\end{claim}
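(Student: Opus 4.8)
The plan is to apply (the proof of) Lemma~\ref{genchar} inside $N_\omega$ via the elementary embedding $j_{1,\omega}\colon V\to N_\omega$; here $N_\omega$ is well-founded by Fact~\ref{fact: gaifman}, so forcing over it makes sense. Thus it suffices to verify, for $j_{1,\omega}(\mathbb P)$ and the pair $\la\la x_n:1\le n<\omega\ra,\la C_n:n<\omega\ra\ra$, the two conditions in the conclusion of that lemma: (i) for every $\vec F\in N_\omega$ such that $\la\emptyset\ra^\frown\vec F$ is a condition of length $0$ in $j_{1,\omega}(\mathbb P)$, there is $m$ with $x_n\in\dom F_n$ and $F_n(x_n)\in C_n$ for all $n\ge m$; and (ii) each $C_n$ is generic over $N_\omega$ for the appropriate collapse forcing. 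The routine ``candidacy'' facts are recorded along the way: since $y_n$ is the canonical seed $j_{n-1,n}[j_{1,n-1}(\kappa^{+\gamma_n})]$ of the ultrapower $j_{n-1,n}$ and $\crit(j_{n,\omega})=j_{1,n}(\kappa)$, one computes $\kappa_n:=x_n\cap j_{1,\omega}(\kappa)=j_{1,n-1}(\kappa)$, so the $\kappa_n$ are inaccessible and strictly increasing with $\kappa_{n+1}>|x_n|$, and each $x_n$ has the required form.

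For (ii): the $n$-th coordinate forcing is $\col(\mu,\kappa_1)$ when $n=0$ and $\col(\kappa_n^{+\delta_n+2},\kappa_{n+1})$ when $n\ge 1$; all its conditions have size $<\kappa_{n+1}=\crit(j_{n,\omega})$, so this poset is computed identically in $N_n$ and in $N_\omega$. Since each successive ultrapower in the iteration is taken with a measure complete at (at least) its critical point, the iteration adds no new subsets of $\kappa_{n+1}$, whence $\p(\kappa_{n+1})^{N_\omega}=\p(\kappa_{n+1})^{N_n}$; consequently every dense subset of the coordinate forcing lying in $N_\omega$ already lies in $N_n$ (and, for $n=0,1$, in $V$, using $N_2=\Ult(V,U_2)$). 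Now $C_0\times C_1$ was chosen generic over $V$, and for $n\ge 2$ one has $C_n=j_{1,n-1}(K_n)$ with $K_n$ generic over $M_n$ for $\col(\kappa^{+\delta_n+2},j_n(\kappa))^{M_n}$; applying $j_{1,n-1}$, and using $j_{1,n-1}(M_n)=N_n$ together with $j_{1,n-1}(j_n(\kappa))=j_{1,n}(\kappa)=\kappa_{n+1}$, shows that $C_n$ is $\col(\kappa_n^{+\delta_n+2},\kappa_{n+1})^{N_n}$-generic over $N_n$. Hence each $C_n$ is generic over $N_\omega$ for the right forcing (mutual genericity of $C_0,C_1$ descending to $N_\omega$ by coding their dense sets as subsets of $\kappa$), which is (ii).

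The crux is (i). Fix such an $\vec F\in N_\omega$. Since $N_\omega$ is the direct limit, there are $m_0<\omega$ and $\vec F^*\in N_{m_0}$ with $j_{m_0,\omega}(\vec F^*)=\vec F$, so $F_n=j_{m_0,\omega}(F^*_n)$ for each $n$; by elementarity $\la\emptyset\ra^\frown\vec F^*$ is a condition of length $0$ in $j_{1,m_0}(\mathbb P)$, so clauses~(6) and~(9) of the definition of the forcing give $\dom F^*_n\in j_{1,m_0}(U_n)$ and $[F^*_n]_{j_{1,m_0}(U_n)}\in j_{1,m_0}(K_n)$ for all $n\ge 1$. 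Fix $n>m_0$ and set $g_n:=j_{m_0,n-1}(F^*_n)\in N_{n-1}$. Then $\dom g_n=j_{m_0,n-1}(\dom F^*_n)\in j_{1,n-1}(U_n)$, so, since $y_n$ is the seed of $j_{n-1,n}$ --- i.e.\ $j_{n-1,n}(h)(y_n)=[h]_{j_{1,n-1}(U_n)}$ for $h\in N_{n-1}$, and $y_n\in j_{n-1,n}(B)$ for every $B\in j_{1,n-1}(U_n)$ --- we get $y_n\in j_{n-1,n}(\dom g_n)=\dom\bigl(j_{m_0,n}(F^*_n)\bigr)$, and applying $j_{n,\omega}$ gives $x_n=j_{n,\omega}(y_n)\in\dom F_n$. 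Moreover
\[
F_n(x_n)=j_{n,\omega}\bigl(j_{m_0,n}(F^*_n)(y_n)\bigr)=j_{n,\omega}\bigl(j_{n-1,n}(g_n)(y_n)\bigr)=j_{n,\omega}\bigl([g_n]_{j_{1,n-1}(U_n)}\bigr),
\]
and $[g_n]_{j_{1,n-1}(U_n)}$ is a single collapse condition of size $<\kappa_{n+1}=\crit(j_{n,\omega})$, hence fixed by $j_{n,\omega}$, so $F_n(x_n)=[g_n]_{j_{1,n-1}(U_n)}$. Finally, applying $j_{m_0,n-1}$ to ``$[F^*_n]_{j_{1,m_0}(U_n)}\in j_{1,m_0}(K_n)$'' yields $[g_n]_{j_{1,n-1}(U_n)}\in j_{1,n-1}(K_n)=C_n$. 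So $x_n\in\dom F_n$ and $F_n(x_n)\in C_n$ for every $n>m_0$, i.e.\ $m:=m_0+1$ witnesses (i) for this $\vec F$, and the claim follows.

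I expect the main obstacle to be the bookkeeping in the third paragraph: one must keep straight that the Prikry points $x_n$ are precisely the seeds of the successive ultrapowers, and track how the two ``generic coherence'' constraints on a length-$0$ condition --- namely $\dom F_n\in U_n$ and $[F_n]_{U_n}\in K_n$ --- are transported by the iteration maps $j_{m_0,n-1}$, $j_{n-1,n}$ and $j_{n,\omega}$, so that $F_n(x_n)$ lands in exactly the collapse forcing that $C_n$ is generic for. The absoluteness inputs to (ii) --- that iterated ultrapowers by measures complete at their critical points add no new subsets of the relevant cardinals, and that every element of the direct limit $N_\omega$ appears at some finite stage --- are standard, but must be invoked with care.
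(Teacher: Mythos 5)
Your proof is correct and takes essentially the same route as the paper: both verify the two clauses of Lemma~\ref{genchar}, getting clause (2) by applying $j_{1,n-1}$ to the genericity of $K_n$ over $M_n$ (and descending to $N_\omega$), and clause (1) by pulling $\vec F$ back to a finite stage, using that $y_n$ is the seed of $j_{1,n-1}(U_n)$ so that $F_n(x_n)$ is the class $[j_{m_0,n-1}(F^*_n)]_{j_{1,n-1}(U_n)} \in j_{1,n-1}(K_n)=C_n$, fixed by $j_{n,\omega}$. The only nitpick: for that last fixedness one should say the condition lies in $H_{\kappa_{n+1}}^{N_n}$ (equivalently has rank below $\crit(j_{n,\omega})$, as the paper phrases it), not merely that it has size $<\kappa_{n+1}$ --- which is immediate here since it is a bounded subset of $\kappa_{n+1}\times\kappa_{n+1}$ of size below the critical point.
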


\begin{proof}
We need to verify the two conditions of Lemma \ref{genchar}.  For (1), suppose $\vec F = \la F_n : 1\leq n < \omega \ra$ is such that $\la \emptyset \ra ^\frown \vec F \in j_{1,\omega}(\mathbb P)$ is a condition of length 0.  Let $m < \omega$ be such that $\vec F =  j_{m,\omega}(\vec F')$ for some $\vec F'$.  For $n \geq m$, $\dom j_{m,n}(F'_{n+1}) \in j_{1,n}(U_{n+1})$, and $N_n \models [j_{m,n}(F'_{n+1})]_{j_{1,n}(U_{n+1})} \in j_{1,n}(K_{n+1})$.  Thus for $n \geq m$, $y_{n+1} \in \dom j_{m,n+1}(F'_{n+1})$, and $f_{n+1} := j_{m,n+1}(F'_{n+1})(y_{n+1}) \in C_{n+1}$.  Note that $f_{n+1}$ is an object of rank $< j_{1,n+1}(\kappa) = \crit(j_{n+1,\omega})$.  Thus for $n > m$, $x_n \in \dom F_n$ and $f_n = F_n(x_n) \in C_n$.

To verify (2), note that for each $n > 1$, $N_{n-1} \models j_{1,n-1}(K_n)$ is generic for $\col(j_{1,n-1}(\kappa^{+\delta_n+2}),j_{1,n}(\kappa))$ over $N_n$.  It is also generic over the submodel $N_\omega$.  Note also for each $n > 1$, $\kappa_n := x_n \cap j_{1,\omega}(\kappa) = j_{1,n-1}(\kappa)$.
\end{proof}

Let $G$ be the generated filter for $j_{1,\omega}(\mathbb P)$.  Note that $j_{1,\omega}(p_1) \in G$.  We claim that $N_\omega[G]$ is closed under $\kappa$-sequences from $V[C_0 \times C_1]$.  Since $C_0 \times C_1$ is generic for a forcing of size $\kappa$, it suffices to show that $N_\omega[\la x_n : 2 \leq n < \omega \ra]$ is closed under $\kappa$-sequences from $V$, an idea due to Bukovsky \cite{Bukovsky1977} and independently to Dehornoy \cite{Dehornoy1978}.  This follows from the fact that every element of $N_\omega$ is of the form $j_{1,\omega}(f)(x_{2},\dots,x_n)$ for some function $f \in V$ and some $n<\omega$.  Let $\la f_\alpha : \alpha < \kappa \ra$ be a sequence of functions in $V$, such that for each $\alpha$, there is $n_\alpha$ such that $\dom f_\alpha = \p_\kappa(\kappa^{+\gamma_{2}}) \times \dots \times  \p_\kappa(\kappa^{+\gamma_{n_\alpha}})$.  Then $\la j_{1,\omega}(f_\alpha)(x_{2},\dots,x_{n_\alpha}) : \alpha < \kappa \ra$ can be computed from $j_{1,\omega}(\la f_\alpha : \alpha < \kappa \ra)$ and $\la x_n : 2 \leq n < \omega \ra$.

For all $\alpha < j_{1,\omega}(\kappa)$, there are $n<\omega$ and $\beta <j_{1,n}(\kappa)$ such that $\alpha = j_{n,\omega}(\beta)$, and $\alpha = \beta$ since $\crit(j_{n,\omega}) = j_{1,n}(\kappa)$.  By $\GCH$ and the nature of the measures, for $2\leq n<\omega$, $\kappa^{+\gamma_{n}} <  j_{1,n}(\kappa) < \kappa^{+\gamma}$.  Therefore, $j_{1,\omega}(\kappa) = \kappa^{+\gamma}$.  Furthermore, an easy counting argument shows that $j_{1,\omega}(\kappa^{+\gamma+1}) = \kappa^{+\gamma+1}$.

By Lemma \ref{lemma: reflecting cc}, $V[C_0 \times C_1] \models (\kappa^{+\gamma+1},\kappa^{+\gamma}) \chang (\mu^{+\gamma+1},\mu^{+\gamma})$.  Let $\mathfrak{A} \in N_\omega[G]$ be an algebra on $\kappa^{+\gamma+1} = (j_{1,\omega}(\kappa)^+)^{N_\omega[G]}$.  In $V[C_0 \times C_1]$, there is $\mathfrak{B} \prec \mathfrak{A}$ of size $\mu^{+\gamma+1}$ such that $| \mathfrak{B} \cap \kappa^{+\gamma} | = \mu^{+\gamma}$.  By the closure of $N_\omega[G]$, $\mathfrak{B} \in N_\omega[G]$.  This shows that $N_\omega[G]$ satisfies the desired instance of Chang's Conjecture, and thus by elementarity that $p_1$ forces $(\kappa^+,\kappa) \chang (\mu^{+\gamma+1},\mu^{+\gamma})$.  This completes the proof of Theorem \ref{cc any countable cof}.

\begin{corollary}
Suppose $\mathbb P = \mathbb P(\mu,\vec\gamma,\vec\delta,\vec U,\vec K)$ is as above.  Then there is a condition $p \in \mathbb P$ of length 0 that forces 
$$(\mu^{+\delta+1},\mu^{+\delta}) \chang (\mu^{+\sum_1^n \delta_i+\gamma+1},\mu^{+\sum_1^n \delta_i+\gamma}) \chang$$
$$(\mu^{+\sum_1^m \delta_i+\gamma+1},\mu^{+\sum_1^m \delta_i+\gamma})\chang (\mu^{+\gamma+1},\mu^{+\gamma})$$
 for $1\leq m < n<\omega$.
\end{corollary}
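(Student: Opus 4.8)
The strategy is to reduce the corollary to two families of two-cardinal statements — one supplied by Lemma \ref{lemma: reflecting cc}, the other by re-running the Case 3 argument in the proof of Theorem \ref{cc any countable cof} — and to assemble them below a single length-$0$ condition by a density argument, invoking the transitivity of Chang's Conjecture.

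Recall $\mathbb P=\mathbb P(\mu,\vec\gamma,\vec\delta,\vec U,\vec K)$ is built around a $\kappa^{+\gamma+1}$-supercompact $\kappa$, with $\gamma=\sup_i\gamma_i>0$ a limit ordinal and $A\subseteq\kappa$ the set given by Lemma \ref{lemma: reflecting cc} with respect to $\gamma$, chosen in the projection of each $U_i$ to $\kappa$. Let $p$ be the length-$0$ condition all of whose $F_i$ satisfy $\dom F_i\subseteq\{x:x\cap\kappa\in A\}$, so $p$ forces every Prikry point $\kappa_n=x_n\cap\kappa$ into $A$. Put $E_k=\sum_{1\le i\le k}\delta_i$ (so $E_0=0$ and $\sup_k E_k=\delta$). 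A short ordinal computation (using $\delta_i\ge\gamma\ge\omega$, so $3+\delta_i=\delta_i$ and $2+\gamma=\gamma$, together with the inaccessibility of each $\kappa_k$ in $V$ and the closure of the collapses involved) shows that for any generic $G\ni p$ one has, in $V[G]$: $\kappa_1=\mu^+$, $\kappa_{k+1}=\mu^{+(E_k+3)}$ for $k\ge 1$, hence $(\kappa_{k+1}^{+\gamma})^V=(\kappa_k^{+\delta_k+2})^{+\gamma}=\mu^{+E_k+\gamma}$ and $(\kappa_{k+1}^{+\gamma+1})^V=\mu^{+E_k+\gamma+1}$, while $\kappa$ becomes $\mu^{+\delta}$ and $(\kappa^{+\gamma+1})^V=\mu^{+\delta+1}$. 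Thus the $k$-th pair in the displayed chain equals $((\kappa_{k+1}^{+\gamma+1})^V,(\kappa_{k+1}^{+\gamma})^V)$ (the pair $(\mu^{+\gamma+1},\mu^{+\gamma})$ being the case $k=0$, via $\kappa_1=\mu^+$), and the top pair equals $((\kappa^{+\gamma+1})^V,\kappa^V)$. As $p$ has length $0$, conditions of every finite length are dense below $p$, and the set of conditions forcing a fixed statement is open; so it is enough to produce, below an arbitrary $r\le p$, a condition $r'\le r$ forcing each of (a) $(\kappa_{n+1}^{+\gamma+1},\kappa_{n+1}^{+\gamma})\chang(\kappa_{m+1}^{+\gamma+1},\kappa_{m+1}^{+\gamma})$ for $n>m\ge 0$, and (b) $((\kappa^{+\gamma+1})^V,\kappa)\chang(\kappa_{\ell+1}^{+\gamma+1},\kappa_{\ell+1}^{+\gamma})$ for $\ell\ge 1$. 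Transitivity of Chang's Conjecture then yields the chain.

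For (a): given $r\le p$, extend it to $r^*\le r$ of some length $\ell\ge n+1$. By Lemma \ref{gs1 factor}, $\mathbb P\restriction r^*$ is the product of the collapses $\col(\mu,\kappa_1),\col(\kappa_1^{+\delta_1+2},\kappa_2),\dots,\col(\kappa_{\ell-1}^{+\delta_{\ell-1}+2},\kappa_\ell)$ with a copy $\mathbb S$ of $\mathbb P(\kappa_\ell^{+\delta_\ell+2},\dots)$. Let $\mathbb P'$ be the first $m+1$ of these collapses — a forcing of size $\kappa_{m+1}$, hence $\kappa_{m+1}^{+\gamma+1}$-Knaster — and $\mathbb Q'$ the next $n-m$ of them, which is $<\kappa_{m+1}^{+\delta_{m+1}+2}$-closed, so that by Easton's lemma $\mathbb P'$ forces it to be $(\kappa_{m+1}^{+\gamma+1},\kappa_{m+1}^{+\gamma+1})$-distributive. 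Since $\kappa_{m+1},\kappa_{n+1}\in A$ and $|\mathbb P'\times\mathbb Q'|=\kappa_{n+1}<\kappa_{n+1}^{+\gamma}$, Lemma \ref{lemma: reflecting cc} (with $\alpha=\kappa_{m+1}$, $\beta=\kappa_{n+1}$) gives $(\kappa_{n+1}^{+\gamma+1},\kappa_{n+1}^{+\gamma})\chang_{\kappa_{m+1}^{+\gamma}}(\kappa_{m+1}^{+\gamma+1},\kappa_{m+1}^{+\gamma})$ in $V[\mathbb P'\times\mathbb Q']$. The remaining factors — the collapses from $\kappa_{n+1}$ up through $\kappa_\ell$, which are $<\kappa_{n+1}^{+\delta_{n+1}+2}$-closed, and $\mathbb S$, which is Prikry-type and weakly $\kappa_\ell^{+\delta_\ell+2}$-closed with $\kappa_\ell^{+\delta_\ell+1}\ge\kappa_{n+1}^{+\gamma+1}$ — are distributive enough at $\kappa_{n+1}^{+\gamma+1}$ to add no new structure there and to collapse none of the four cardinals in question, so the instance persists to $V[\mathbb P\restriction r^*]$, and $r^*$ forces it. (The case $m=0$ uses $\kappa_1\in A$, giving the bottom link $(\kappa_{n+1}^{+\gamma+1},\kappa_{n+1}^{+\gamma})\chang(\mu^{+\gamma+1},\mu^{+\gamma})$.)

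For (b): fix $\ell\ge 1$ and first suppose $r\le p$ has length $\le\ell$; extend it to $r^*$ of length exactly $\ell$. By Lemma \ref{gs1 factor}, $\mathbb P\restriction r^*\cong\mathbb S_0\times\mathbb S_1$, where $\mathbb S_0$ is the collapse product up through $\kappa_\ell$ (of size $\kappa_\ell<\kappa$) and $\mathbb S_1$ is a copy of $\mathbb P(\nu,\dots)$ with $\nu=\kappa_\ell^{+\delta_\ell+2}$ a regular cardinal below $\kappa$ whose Prikry points all lie in $A$. I would re-run the Case 3 argument of Theorem \ref{cc any countable cof} for $\mathbb S_1$ over the ground model $V[\mathbb S_0]$: $\kappa$ remains $\kappa^{+\gamma+1}$-supercompact there by L\'evy--Solovay, the normal measures $\vec U^{(\ell)}$ and the $M_n$-generic collapse-filters $\vec K^{(\ell)}$ remain adequate because $\mathbb S_0$ has size $<\kappa$ and is mutually generic with the relevant collapses over each ultrapower, and the single use of Lemma \ref{lemma: reflecting cc} inside that argument — which there produces $(\kappa^{+\gamma+1},\kappa^{+\gamma})\chang(\nu^{+\gamma+1},\nu^{+\gamma})$ after a two-step collapse — can be carried out over $V$ by absorbing $\mathbb S_0$ into the Knaster part, using $\kappa_{\ell+1}\in A$. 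The conclusion is that some length-$1$ condition of $\mathbb S_1$ forces $(\kappa^+,\kappa)\chang(\nu^{+\gamma+1},\nu^{+\gamma})$ over $V[\mathbb S_0]$; since $(\kappa^+)^{V[\mathbb S_0\times\mathbb S_1]}=(\kappa^{+\gamma+1})^V$ and $\nu^{+\gamma}=(\kappa_{\ell+1}^{+\gamma})^V$, the corresponding length-$(\ell+1)$ condition $r'\le r^*$ forces the instance in (b). If $r$ has length $\ell''>\ell$, apply the same argument to the tail of $r$ to force $((\kappa^{+\gamma+1})^V,\kappa)\chang(\kappa_{\ell''+1}^{+\gamma+1},\kappa_{\ell''+1}^{+\gamma})$, then further extend, using part (a), so that $(\kappa_{\ell''+1}^{+\gamma+1},\kappa_{\ell''+1}^{+\gamma})\chang(\kappa_{\ell+1}^{+\gamma+1},\kappa_{\ell+1}^{+\gamma})$ is also forced, and compose. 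I expect the delicate step to be exactly this transfer — checking that the iterated-ultrapower and generic-recovery machinery of the Case 3 argument is robust under the $<\kappa$-sized forcing $\mathbb S_0$ and applies to $\mathbb P(\nu,\dots)$ with the unchanged parameters $\vec U^{(\ell)},\vec K^{(\ell)}$ — while the ordinal bookkeeping, the appeals to Easton's lemma, and the preservation of the instances in (a) under the highly closed tail of the forcing are routine.
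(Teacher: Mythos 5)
Your overall architecture is the same as the paper's: the same length-$0$ condition forcing all Prikry points into $A$, the same identification of the pairs in the chain with $\bigl((\kappa_k^{+\gamma+1})^V,(\kappa_k^{+\gamma})^V\bigr)$ and $\bigl((\kappa^{+\gamma+1})^V,\kappa\bigr)$, and for the lower arrows the same density-plus-factorization argument: extend to a long condition, factor via Lemma \ref{gs1 factor}, group the collapses below the relevant Prikry point into a small (hence Knaster) part and a sufficiently closed part, apply Lemma \ref{lemma: reflecting cc}, and preserve the instance under the remaining highly distributive factors. That portion of your proposal is correct and is essentially the paper's argument.

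The genuine gap is exactly the step you flag in (b), and as written it does not go through. You propose to re-run the Case 3 argument of Theorem \ref{cc any countable cof} for the tail $\mathbb S_1=\mathbb P(\kappa_\ell^{+\delta_\ell+2},\dots)$ over the ground model $V[\mathbb S_0]$ ``with the unchanged parameters $\vec U^{(\ell)},\vec K^{(\ell)}$,'' citing L\'evy--Solovay and mutual genericity. But the guiding filters $K_n$ are $M_n$-generic for collapses computed in $M_n$; although $\mathbb S_0$ has size $<\kappa$, it adds many new small functions, so the collapse posets recomputed in the lifted ultrapowers of $V[\mathbb S_0]$ properly contain the old ones and the old $K_n$ are no longer generic for them — in particular $\mathbb S_1$ is not a forcing of the form $\mathbb P(\nu,\dots)$ in the sense of the definition over $V[\mathbb S_0]$, so ``Case 3 over $V[\mathbb S_0]$'' is not available as stated (note also that Case 3 never needs $\kappa$ to remain supercompact in any extension, so the appeal to L\'evy--Solovay signals that the frame is off). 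The paper's proof avoids this entirely by never changing the base of the iteration: the last collapse factor $\col(\kappa_{\ell-1}^{+\delta_{\ell-1}+2},\kappa_\ell)$ together with the tail is isomorphic to the restriction of the $V$-defined shifted forcing $\mathbb P(\kappa_{\ell-1}^{+\delta_{\ell-1}+2},\vec\gamma'',\vec\delta'',\vec U'',\vec K'')$ to a condition of length $1$, so the Case 3 iterated-ultrapower argument applies verbatim over $V$; the earlier collapses are absorbed into the $V$-generic collapse filter whose $\kappa$-closure is invoked (this is the role of $C_0\times C_1$ in Case 3, formalized in general as Claim \ref{claim: generic over ultrapower, general} of Section \ref{section: singular gcc}), with the required instance of Chang's Conjecture in that intermediate model supplied by Lemma \ref{lemma: reflecting cc}, taking the whole collapse product as the Knaster part. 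With that replacement your step (b) becomes the paper's argument; without it, the transfer you rely on is unproved and, in the form stated, false.
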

\begin{proof}
Note that it is forced that $\mu^{+\gamma}=\kappa_1^{+\gamma}$, and for each $n\geq 1$, $\kappa_n^{+\delta_n+\gamma} = \kappa_{n+1}^{+\gamma} = \mu^{+\sum_1^n \delta_i+\gamma}$.  Let $p$ be a condition of length 0 that forces all Prikry points to be in the set $A$ given by Lemma \ref{lemma: reflecting cc}.     Fix $1 \leq m < n < \omega$, and let $q \leq p$ be a condition of length $n$.  By Lemma \ref{gs1 factor}, $\mathbb P \restriction q$ is isomorphic to a restriction of
$$\col(\mu,\kappa_1) \times \dots \times \col(\kappa_{n-1}^{+\delta_{n-1}+2},\kappa_{n}) \times \mathbb P(\kappa_{n}^{+\delta_{n}+2},\vec\gamma',\vec\delta',\vec U',\vec K'),$$
where $s'$ denotes the shift of a sequence $s$ by $n$. By Lemma \ref{lemma: reflecting cc}, this product forces 
$(\kappa_{n}^{+\gamma+1},\kappa_{n}^{+\gamma}) \chang (\kappa_{m}^{+\gamma+1},\kappa_{m}^{+\gamma}) \chang (\mu^{+\gamma+1},\mu^{+\gamma}).$
The last two terms of the product are isomorphic to a restriction of
$\mathbb P(\kappa_{n-1}^{+\delta_{n-1}+2},\vec\gamma'',\vec\delta'',\vec U'',\vec K'')$ to a condition of length 1, where $s''$ denotes the shift of the original sequence $s$ by $n-1$.
By the argument for Case 3 of Theorem \ref{cc any countable cof}, this forces $(\kappa_n^{+\sum^\infty_{n}\delta_i+1},\kappa_n^{+\sum^\infty_{n}\delta_i}) \chang (\kappa_n^{+\gamma+1},\kappa_n^{+\gamma}).$
\end{proof}

Our methods are not limited to getting $(\aleph_{\beta+1},\aleph_\beta) \chang (\aleph_{\alpha+1},\aleph_\alpha)$ where $\alpha$ and $\beta$ are countable.  For example, if we opt not to interleave collapses in the Gitik-Sharon forcing, we obtain:

\begin{porism}
Let $\alpha \geq \omega$ be a countable limit ordinal, %$\beta = \max\{\alpha,\omega\}$, 
and let $\kappa$ be a $\kappa^{+\alpha+1}$-supercompact cardinal.  Then there is a generic extension in which $(\lambda^+,\lambda) \chang (\aleph_{\alpha+1},\aleph_\alpha)$, and another in which $(\lambda^{+\alpha+1},\lambda^{+\alpha}) \chang (\lambda^+,\lambda)$, where in both cases $\cf(\lambda) = \omega$ and $\aleph_\lambda = \lambda$.
\end{porism}

\section{Singular Global Chang's Conjecture below \texorpdfstring{$\aleph_{\omega^\omega}$}{alephomegaomega}}\label{section: singular gcc}
In this section we will prove the following theorem:
\begin{theorem}
If there is a model of \ZFC with a cardinal $\delta$ which is $\delta^{+\omega+1}$-supercompact and Woodin for supercompactness, then there is a model in which $(\aleph_{\alpha+1},\aleph_\alpha) \chang (\aleph_{\beta+1},\aleph_\beta)$ holds for all limit $\beta < \alpha < \omega^\omega$ (including $\beta=0$).
\end{theorem}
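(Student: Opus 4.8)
The plan is to combine the case analysis of Theorem \ref{cc any countable cof} with an iteration along a Woodin-for-supercompactness cardinal, so that after a suitable preparation every relevant pair $(\aleph_{\alpha+1},\aleph_\alpha)$ and $(\aleph_{\beta+1},\aleph_\beta)$ with $\beta<\alpha<\omega^\omega$ is connected by a Chang's Conjecture instance, and then to invoke transitivity of $\chang$. First I would fix, for each pair of countable limit ordinals $\beta<\alpha<\omega^\omega$, the ``tail type'' data $(\tau(\alpha),\iota(\alpha),\tau(\beta),\iota(\beta))$ and recall which of Cases 1--3 of Theorem \ref{cc any countable cof} applies; the common feature of all three cases is that the instance $(\aleph_{\alpha+1},\aleph_\alpha)\chang(\aleph_{\beta+1},\aleph_\beta)$ is obtained by a forcing of the form (collapse $\times$ Gitik--Sharon-type $\mathbb P(\mu,\vec\gamma,\vec\delta,\vec U,\vec K)$) acting on an interval of cardinals above some $\nu^{+\cdots}$ determined by the set $A$ from Lemma \ref{lemma: reflecting cc}, followed by collapses below to push $\nu$ down to $\aleph_{\alpha}$ or a further singular. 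The subtlety is that these forcings must all be carried out \emph{simultaneously} and arranged so that the $\aleph$-indexing comes out consistently for the whole block $[\aleph_0,\aleph_{\omega^\omega})$.

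The second step is to set up the global iteration. Using the cardinal $\delta$ which is Woodin for supercompactness, run a reverse Easton iteration $\mathbb R$ along the stationarily many $\kappa<\delta$ that are $\kappa^{+\omega+1}$-supercompact with target $\delta$, at each stage forcing with the appropriate product of collapses and $\mathbb P(\mu,\vec\gamma,\vec\delta,\vec U,\vec K)$-forcings dictated by the block of $\aleph$-indices being constructed there; the Woodin-for-supercompactness hypothesis is what guarantees that supercompactness of the relevant $\kappa$'s is preserved through the tail of the iteration (this is the standard Foreman-style ``Laver preparation by Woodin-for-supercompactness'' mechanism, cf.\ the use of such cardinals in \cite{EskewHayut2018}), and the $\kappa^{+\omega+1}$-supercompactness of $\delta$ itself supplies the top instance. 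At each stage the forcing below a given $\kappa$ has size $<\kappa$, the Gitik--Sharon piece is $\kappa^{+\gamma}$-centered hence $\kappa^{+\gamma+1}$-c.c., and the collapses above are highly distributive, so Lemmas \ref{preserve}, \ref{stepup}, \ref{lemma: reflecting cc} and the factorization Lemma \ref{gs1 factor} apply to show that each individual instance of Chang's Conjecture, once obtained, is preserved by the remainder of the iteration. Putting the instances together along ``chains'' of pairs and using transitivity of $\chang$ (as in the Corollary after Lemma \ref{lemma: reflecting cc}), one deduces $(\aleph_{\alpha+1},\aleph_\alpha)\chang(\aleph_{\beta+1},\aleph_\beta)$ for \emph{all} limit $\beta<\alpha<\omega^\omega$, including $\beta=0$ via the $\col(\omega,-)$ collapses built into Cases 1--3.

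The main obstacle, and where most of the work lies, is the \textbf{bookkeeping and coherence of the global iteration}: one must choose a single assignment of the ordinals below $\omega^\omega$ to stages of the iteration so that (i) at each stage the cardinal structure produced below matches what the later stages expect (the Gitik--Sharon forcing both singularizes $\kappa$ and collapses $(\kappa^{+\gamma})^V$, so the $\aleph$-indices shift in a way that must be tracked globally), (ii) every pair $(\beta,\alpha)$ is eventually covered by some instance or chain of instances, and (iii) the supercompactness needed at stage $\kappa$ (namely $\kappa^{+\gamma_n}$-supercompactness for the relevant $\gamma_n<\gamma<\omega$, all below $\omega$) survives the preparation up to $\kappa$ — this is precisely what the Woodin-for-supercompactness assumption buys, but verifying it requires checking that the iteration is suitably ``nice'' (directed, with the right closure and chain-condition profile at each stage) so that the standard reflection/lifting argument goes through. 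A secondary technical point is checking that $\GCH$-type hypotheses needed to run $\mathbb P(\mu,\vec\gamma,\vec\delta,\vec U,\vec K)$ (for the choice of the generics $K_n$ and the counting arguments identifying $j_{1,\omega}(\kappa)=\kappa^{+\gamma}$) are maintained throughout; this can be arranged by interleaving $\GCH$-forcing or by working above a sufficiently closed point, as in Theorem \ref{cc any countable cof}. Once the bookkeeping is fixed, the verification of each clause is routine given the lemmas already established.
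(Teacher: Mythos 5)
There is a genuine gap, and it sits exactly where you set things aside as ``bookkeeping.'' Every instance to be verified lives in one interval of the final model, below the single cardinal that becomes $\aleph_{\omega^\omega}$; these instances cannot be distributed over the stages of a reverse Easton iteration up to $\delta$, since each stage of such an iteration arranges a different region of the ground model's cardinals, whereas here all pairs $(\aleph_{\alpha+1},\aleph_\alpha)$, $(\aleph_{\beta+1},\aleph_\beta)$ with $\beta<\alpha<\omega^\omega$ concern the same block. After reducing by transitivity, what is actually needed is that \emph{in one model}, at every limit-of-limits level (the cardinal that becomes $\aleph_{\omega^n}$, and ultimately $\aleph_{\omega^\omega}$ itself), Chang's Conjecture holds from its successor down to \emph{every} smaller limit cardinal simultaneously. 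Theorem \ref{cc any countable cof} produces each such instance only in its own extension, for one lower pair fixed in advance, and the Gitik--Sharon forcing it uses singularizes the relevant cardinal and destroys the supercompactness and measures one would need to produce further instances afterwards (the paper makes exactly this point in the Open Problems section). So iterating the Section \ref{various} constructions and appealing to transitivity does not yield the required simultaneous family, and your outline contains no mechanism that would.

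For comparison, the paper's route is structured around supplying that missing mechanism. First, Lemma \ref{prepmodel} builds a preparation model in which $(\beta^{+\omega+1},\beta^{+\omega})\chang(\alpha^{+\omega+1},\alpha^{+\omega})$ holds for \emph{all} pairs $\alpha<\beta$, indestructibly under suitably distributive forcing of bounded size; this is where the hypotheses are spent: the $\delta^{+\omega+1}$-supercompactness gives the set $A$ of Lemma \ref{lemma: reflecting cc} along which an Easton iteration of collapses makes the pattern dense, while Woodin-for-supercompactness is used to show that some $\kappa<\delta$ remains fully supercompact in the prepared model (not, as you suggest, as a Laver-style preservation of many $\kappa$'s through the tail of a Gitik--Sharon iteration). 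Second, a \emph{single} forcing is applied at that $\kappa$: a hierarchy of nested, higher-order Gitik--Sharon forcings $\mathrm{GS}_n$ ($1\le n\le\omega$), whose Prikry points themselves carry generics for lower-order forcings; this nesting is what makes all the blocks below $\aleph_{\omega^\omega}$ cohere at once, and it requires proving anew the Prikry property (Lemma \ref{decisive coloring}) and a genericity criterion (Lemma \ref{lemma: genericity for GS_n}). Third, the instances at limit-of-limit levels are obtained not by transitivity but by an iterated-ultrapower argument (Claim \ref{claim: generic over ultrapower, general}), realizing a generic for the image forcing inside a limit ultrapower closed under $\kappa$-sequences, so that the prepared, indestructible instances reflect to $(\kappa^+,\kappa)\chang(\kappa_i^+,\kappa_i)$ at the Prikry points $\kappa_i$. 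None of these three components --- the indestructible global preparation, the nested $\mathrm{GS}_n$ forcings, and the generic-realization step --- appears in your proposal, and without them the plan does not go through.
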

This theorem is an attempt to strengthen Corollary \ref{cor:cc-alpha+1-beta+1}, into a global result. Unfortunately, we do not know how to obtain the desired global result, or even the more natural one in which Chang's Conjecture holds between $(\aleph_{\alpha + 1}, \aleph_\alpha)$ and $(\aleph_{\beta + 1}, \aleph_{\beta})$ for all $\beta < \alpha$ countable limit ordinals. We believe that this is a limitation of our method and not an actual $\ZFC$-barrier. 

Before diving into the technical details, let us sketch the main ideas behind the forcing construction: After a suitable preparation, we obtain a model in which many instances of Chang's Conjecture occur between pairs of cardinals of the form $\kappa^{+\omega}$ and its successor and $\mu^{+\omega}$ and its successor. 
In this model we also have many supercompact cardinals, and this is the reason that we start with a stronger large cardinal hypothesis.

In order to obtain more instances of Chang's Conjecture, we need to apply the ``tail changing'' forcing, which is a Prikry-type forcing resembling the Gitik-Sharon forcing \cite{GitikSharon}. Since we would like to do that simultaneously for more than a single pair of cardinals, we define a Magidor- or Radin-like variant of the Gitik-Sharon forcing. Unfortunately, the diagonal nature of the forcing does not allow us to use a Mitchell-increasing sequence of measures, and we are forced to let the domain of measures increase (a similar issue was encountered in \cite{BenNeriaLambieHansonUnger}). This limits the result of the theorem.

\begin{definition}
A cardinal $\delta$ is called \emph{Woodin for supercompactness} when for every $A \subseteq \delta$, there is $\kappa < \delta$ such that for all $\lambda \in [\kappa,\delta)$, there is a normal $\kappa$-complete ultrafilter $U$ on $\p_\kappa(\lambda)$ such that $j_U(A) \cap \lambda = A \cap \lambda$.
\end{definition}

Like Woodin cardinals, Woodin for supercompactness cardinals need not be even weakly compact, but they have higher consistency strength than supercompact cardinals.  Every almost-huge cardinal is Woodin for supercompactness. Woodin for supercompact cardinals are the same as Vop\v{e}nka cardinals (see \cite{Perlmutter2015}).

\begin{lemma}
\label{prepmodel}
 Suppose $\GCH$ and $\delta$ is $\delta^{+\omega+1}$-supercompact and Woodin for supercompactness.  Then there is a model of $\ZFC$ in which $\GCH$ holds, there is a supercompact cardinal, and 
 for all $\alpha< \beta$, 
 \[(\beta^{+\omega+1},\beta^{+\omega}) \chang (\alpha^{+\omega+1},\alpha^{+\omega}).\] 
 Furthermore, any such instance of Chang's Conjecture is preserved by forcing over this model with a $(\alpha^{+\omega+1},\alpha^{+\omega+1})$-distributive forcing of size $<\beta^{+\omega}$.
 \end{lemma}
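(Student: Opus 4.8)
The plan is to build the model by a reverse Easton iteration $\mathbb{P}$ of length $\delta$, trivial except at certain stages $\kappa<\delta$ where we force with closed collapses of the kind used in Case~1 of Theorem~\ref{cc any countable cof}. Under $\GCH$ every $\kappa^{+\omega}$ is a strong limit, so Lemma~\ref{lemma: reflecting cc} with $\gamma=\omega$ applies to any $\kappa^{+\omega+1}$-supercompact $\kappa$, the required Knaster-then-distributive factorization of the forcing so far being obtained by splitting $\mathbb{P}$ at the lower target cardinal: below it $\mathbb{P}$ has small size, hence is Knaster, and above it $\mathbb{P}$ consists of highly closed collapses, hence is $(\alpha^{+\omega+1},\alpha^{+\omega+1})$-distributive. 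The Woodin-for-supercompactness of $\delta$ enters in two ways. First, it is what makes the construction \emph{global}: for any $A\subseteq\delta$ --- in particular one coding a name for $\mathbb{P}$ together with a pair of prospective target cardinals --- it produces $\kappa<\delta$ that is $\lambda$-supercompact for every $\lambda<\delta$ via ultrafilters $U$ with $j_U(A)\cap\lambda=A\cap\lambda$, so that $\mathbb{P}$ and the target are computed the same way below $\lambda$ in $V$ and in $\Ult(V,U)$; this plays the role a Laver function plays for a single supercompact, and lets one run diagonal bookkeeping so that every relevant pair of cardinals of $V^{\mathbb{P}}$ is anticipated by such a reflecting stage. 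Second, since a Woodin-for-supercompactness cardinal is a Vop\v{e}nka cardinal, one may fix a genuinely supercompact $\kappa^*<\delta$, fold a Laver indestructibility preparation for it into the bottom of $\mathbb{P}$, and observe that all of $\mathbb{P}$ strictly above $\kappa^*$ is $<\kappa^*$-directed-closed (being an iteration of collapses $\col(\zeta^{+\omega+2},\eta)$ with $\zeta>\kappa^*$), so $\kappa^*$ remains supercompact in $V^{\mathbb{P}}$.

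I would choose the collapses at the designated stages so that, in the final model, the cardinals accumulating below each such $\kappa$ form exactly the pattern in which \emph{every} cardinal $\mu$ sits inside the reflecting set attached to some later stage (and to $\delta$ itself, whose $\delta^{+\omega+1}$-supercompactness, lifted through $\mathbb{P}$, governs the top of the iteration). Then at each designated stage Lemma~\ref{lemma: reflecting cc}, in its $\chang_{\alpha^{+\omega}}$ form, gives $(\kappa^{+\omega+1},\kappa^{+\omega}) \chang_{\alpha^{+\omega}} (\alpha^{+\omega+1},\alpha^{+\omega})$ for $\alpha$ below $\kappa$ in the reflecting set, and reflecting this through the embedding attached to that stage yields $(\beta^{+\omega+1},\beta^{+\omega}) \chang_{\alpha^{+\omega}} (\alpha^{+\omega+1},\alpha^{+\omega})$ for all $\alpha<\beta$, in the strong Shelah form. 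Standard reverse-Easton arithmetic --- each initial segment small, each tail suitably closed --- preserves all cardinals and $\GCH$.

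For the last clause, since what has been proved is $(\beta^{+\omega+1},\beta^{+\omega}) \chang_{\alpha^{+\omega}} (\alpha^{+\omega+1},\alpha^{+\omega})$, and any $(\alpha^{+\omega+1},\alpha^{+\omega+1})$-distributive forcing adds no new subsets of $\alpha^{+\omega}$ while any forcing of size $<\beta^{+\omega}$ is $\beta^{+\omega}$-c.c.\ and so computes $\alpha^{+\omega},\alpha^{+\omega+1},\beta^{+\omega},\beta^{+\omega+1}$ the same way, a name-decoding argument in the style of Lemmas~\ref{bottomcard} and \ref{preserve} --- take a ground-model structure coding a name for the given algebra on $\beta^{+\omega+1}$, apply $\chang_{\alpha^{+\omega}}$ to it, and read off the desired elementary substructure in the extension --- shows that $(\beta^{+\omega+1},\beta^{+\omega}) \chang (\alpha^{+\omega+1},\alpha^{+\omega})$ is preserved.

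The hard part is the simultaneous bookkeeping for $\mathbb{P}$: engineering one iteration that (i) anticipates \emph{every} pair of cardinals needing an instance of Chang's Conjecture --- precisely the demand that outstrips a single supercompact and forces the appeal to Woodin-for-supercompactness (equivalently the Vop\v{e}nka property), the only available source of the requisite guessing --- while (ii) keeping each step closed enough that Lemma~\ref{lemma: reflecting cc} applies at every designated stage and a Laver preparation at the bottom survives, leaving a full supercompact, and (iii) not disturbing $\GCH$. These three requirements pull against one another in the choice of supports and of the individual collapses, and checking that a coherent choice exists is the technical core of the argument.
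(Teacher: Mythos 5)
Your skeleton for the Chang's Conjecture half does match the paper's: an Easton-support iteration of collapses aligned with a reflecting set, with the ``small Knaster initial segment followed by a highly distributive tail'' factorization feeding Lemma~\ref{lemma: reflecting cc}. But your account of where the two large-cardinal hypotheses do their work is off, and that is where the genuine gaps lie. No diagonal bookkeeping or ``anticipation of pairs'' is needed for the global conclusion: a single application of Lemma~\ref{lemma: reflecting cc} to the $\delta^{+\omega+1}$-supercompactness of $\delta$ yields one set $A\subseteq\delta$ that works simultaneously for \emph{all} pairs from $A\cup\{\delta\}$ and all admissible two-step iterations; the iteration then simply collapses so that every $\alpha^{+\omega}$ of the extension equals $\beta^{+\omega}$ for some $\beta\in A$ (using $\col(\alpha_i^{+},\alpha_{i+1})$ at limit points of $A$ not in $A$, and $\col(\alpha_i^{+\omega+2},\alpha_{i+1})$ at points of $A$). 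The Woodin-for-supercompactness hypothesis is needed exactly for the step you dispose of by fiat: it does \emph{not} give you a ``genuinely supercompact $\kappa^*<\delta$'' (its witnesses are only $\lambda$-supercompact for $\lambda<\delta$, and the final model is $V_\delta[G_\delta]$); what it gives, applied to the specific set $A$, is a $\kappa$ whose ultrafilters satisfy $j_U(A)\cap\lambda=A\cap\lambda$, so that $j_U(\mathbb P_\kappa)$ factors as $\mathbb P_\kappa * \mathbb P_\lambda/\mathbb P_\kappa * \mathbb Q$ and the embedding can be lifted by building the missing generics from $\GCH$ and closure. Your substitute---a Laver preparation at the bottom plus ``everything above $\kappa^*$ is directed closed''---fails twice over: the iteration necessarily contains the prescribed collapses below $\kappa^*$ (they are what produce the CC pattern among small cardinals), and these are neither part of the preparation nor $\kappa^*$-directed closed, so indestructibility does not cover them; conversely, interleaving Laver stages below $\kappa^*$ would wreck the cardinal pattern and the size/Knaster/distributivity bookkeeping that your own CC argument needs for pairs below and across $\kappa^*$. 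You flag this tension as ``the technical core'' and leave it unresolved; resolving it without bookkeeping, via the $A$-coherent ultrafilters, is precisely the content you are missing.

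The ``Furthermore'' clause also needs more than you give it. A forcing of size $<\beta^{+\omega}$ is nowhere near $\alpha^{+\omega+1}$-c.c., so Lemma~\ref{preserve} does not apply, and $(\alpha^{+\omega+1},\alpha^{+\omega+1})$-distributivity does not prevent the forcing from adding \emph{new} algebras on $\beta^{+\omega+1}$; hence there is no ground-model structure to which you can apply $\chang_{\alpha^{+\omega}}$ and ``read off'' a substructure by decoding names. The preservation comes instead from the fact that Lemma~\ref{lemma: reflecting cc} is stated for two-step iterations (Knaster followed by distributive) of total size $<\beta^{+\omega}$: given such a $\mathbb Q$ in the final model, the closure of the tail of the Easton iteration places $\mathbb Q$ in the intermediate extension by $\mathbb P_j$, and then $\mathbb P_i * \bigl(\mathbb P_j/\mathbb P_i * \mathbb Q\bigr)$ is itself an admissible iteration, so the instance of Chang's Conjecture is forced even after $\mathbb Q$, and the remaining tail preserves it. In short, the prospective distributive forcing must be absorbed into the reflection argument itself, not handled afterwards by a soft name argument.
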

 
 \begin{proof}
 Let $A \subseteq \delta$ be given by Lemma \ref{lemma: reflecting cc}.  Let $\la \alpha_i : i < \delta \ra$ enumerate the closure of $A$.  Force with the following Easton support iteration $\langle \mathbb P_i, \dot{\mathbb Q}_j : i \leq \delta, j < \delta \rangle$: 
\begin{enumerate}
\item $\mathbb Q_0 = \col(\omega,\alpha_0^{+\omega}) \ast \dot{\col}(\alpha_0^{+\omega + 2}, \alpha_1)$. \item If $i > 0$ and $\alpha_i \in A$, $\Vdash_i \dot{\mathbb Q}_i = \dot{\mathrm{Col}}(\alpha_i^{+\omega+2},\alpha_{i+1})$.
 \item If $i > 0$ and $\alpha_i \notin A$, $\Vdash_i \dot{\mathbb Q}_i = \dot{\mathrm{Col}}(\alpha_i^+,\alpha_{i+1})$.
\end{enumerate}
It is easy to see that this iteration forces that for all infinite $\alpha<\delta$,% \geq \min A$, 
\[\left(\alpha^{+\omega}\right)^{V^{\mathbb{P}_\delta}} = \left(\beta^{+\omega}\right)^V,\] for some $\beta \in A$.  By standard arguments, $\delta$ remains inaccessible in $V^{\mathbb{P}_\delta}$.

Suppose that in $V^{\mathbb P_\delta}$, $\alpha < \alpha^{+\omega} < \beta<\delta$, and let $i<j$ be such that \[\left(\alpha^{+\omega}\right)^{V^{\mathbb{P}_\delta}} = \left(\alpha_i^{+\omega}\right)^V\text{ and }\left(\beta^{+\omega}\right)^{V^{\mathbb{P}_\delta}} = \left(\alpha_j^{+\omega}\right)^V.\]  Then $\mathbb P_\delta$ factors as $\mathbb P_i *  \mathbb P_j/\mathbb P_i * \mathbb P_\delta / \mathbb P_j$, where $|\mathbb P_i| \leq \alpha_i^{+\omega}$, $\mathbb P_j/\mathbb P_i$ is forced to be $\alpha_i^{+\omega+2}$-closed and of size $\leq \alpha_j$, and $\mathbb P_\delta / \mathbb P_j$ is forced to be $\alpha_j^{+\omega+2}$-closed.

Suppose $\mathbb Q$ is an $(\alpha_i^{\omega+1},\alpha_i^{\omega+1})$-distributive forcing of size $<\alpha_j^{+\omega}$ in $V^{\mathbb P_\delta}$.  Then $\mathbb Q \in V^{\mathbb P_j}$.  Since $\mathbb P_i$ forces that $\mathbb P_j/ \mathbb P_i * \mathbb Q$ is $(\alpha_i^{+\omega+1},\alpha_i^{+\omega+1})$-distributive, Lemma \ref{lemma: reflecting cc} implies that $\mathbb P_j * \mathbb Q$ forces $(\alpha_j^{+\omega+1},\alpha_j^{+\omega}) \chang (\alpha_i^{+\omega+1},\alpha_i^{+\omega})$.  This is preserved by $\mathbb P_\delta / \mathbb P_j$, which remains $(\alpha_j^{+\omega+1},\infty)$-distributive after forcing with $\mathbb Q$.

Finally, we need to find a supercompact.  In $V$, let $\kappa < \delta$ be given by Woodin for supercompactness with respect to $A$.  Let $\lambda > \kappa$ be an inaccessible limit point of $A$.  Let $U$ be a normal $\kappa$-complete ultrafilter on $\p_\kappa(\lambda)$ such that $j_U(A) \cap \lambda = A \cap \lambda$.  We have that $j_U(\mathbb P_\kappa) = \mathbb P_\kappa * \mathbb P_\lambda / \mathbb P_\kappa * \mathbb Q$, for some $\mathbb Q$ that is forced to be $\lambda^+$-closed.  Let $G_\delta \subseteq \mathbb P_\delta$ be generic, and let $G_\lambda = G_\delta \restriction \mathbb P_\lambda$.  By $\GCH$, $j_U(\kappa) < \lambda^{++}$ and $j_U(\lambda^{++}) = \lambda^{++}$, so we may build $H \subseteq \mathbb Q$ in $V[G_\lambda]$ that is generic over $M[G_\lambda]$.  Thus we can extend the embedding to $j : V[G_\kappa] \to M[G_\lambda * H]$.  Since $M[G_\lambda * H]$ is $\lambda$-closed in $V[G_\lambda]$ and $\mathbb P_\lambda / G_\kappa$ is $\kappa$-directed-closed, there is $p \in j_U(\mathbb P_\lambda)/(G_\lambda * H)$ below $j[G_\lambda / G_\kappa]$.  Since $|\mathbb P_\lambda| = \lambda$ and  $j_U(\lambda^+) <  \lambda^{++}$, we can build $K \subseteq j_U(\mathbb P_\lambda)/(G_\lambda * H)$ below $p$ in $V[G_\lambda]$ that is generic over $M[G_\lambda * H]$.  Thus we can extend the embedding to $j : V[G_\lambda] \to M[G_\lambda * H * K]$.  This shows that $\kappa$ is $\lambda$-supercompact in $V[G_\lambda]$, a property that is preserved by $\mathbb P_\delta / G_\lambda$.  Thus, $V_\delta[G_\delta] \models$ ``There is a supercompact cardinal.''
\end{proof}

Let us work in a model satisfying the conclusion of the above lemma.  We define by induction on $1 \leq n \leq \omega$ the class of ``order-$n$ Gitik-Sharon forcings'' (abbreivated by $\text{GS}_n$).  Formally, we fix a large enough regular $\theta$ and define these inductively as subsets of $H_\theta$, but it will be clear that choice of $\theta$ is irrelevant, and for $\theta < \theta'$, $\text{GS}_n^{H_\theta} = \text{GS}_n^{H_{\theta'}} \cap H_\theta$. 
Each order-$n$ forcing will add a club of ordertype $\omega^n$ to a large cardinal $\kappa$, consisting of former inaccessibles, while preserving $\kappa$ as a cardinal, collapsing $\kappa^{+\omega \cdot n}$ to $\kappa$, and preserving larger cardinals.

$\text{GS}_1$ is the collection of forcings of the form $\mathbb P(\mu,\vec\omega,\vec{\omega^2},\vec U,\vec K)$, as defined in the previous section, where $\vec \omega$ is the identity sequence $\la 1,2,3,\dots \ra$, and $\vec{\omega^2}$ is the constant sequence $\la \omega,\omega,\omega,\dots\ra$.   

\begin{definition}
A sequence $d=\la U_\alpha,K_\alpha : \alpha < \omega \cdot n \ra$ is a $\mathrm{GS}_n$-sequence if
\begin{enumerate}
\item There is a $\kappa>\omega$ such that each $U_\alpha$ is a $\kappa$-complete ultrafilter. We call $\kappa$ the \emph{critical point} of the sequence $d$.
 \item For $1\leq n < \omega$, $U_n$ is a normal ultrafilter on $\p_\kappa(\kappa^{+n})$ and for $\omega \leq \alpha <\omega \cdot n$ successor, $U_{\alpha}$ is a normal ultrafilter on $\p_\kappa(H_{\kappa^{+\alpha}}).$
 \item For successor $\alpha <\omega \cdot n$, if $j_\alpha \colon V \to M_\alpha$ is the ultrapower embedding from $U_\alpha$, then $K_\alpha$ is $\col(\kappa^{+\alpha +\omega +2},j_\alpha(\kappa))^{M_\alpha}$-generic over $M_\alpha$.
\end{enumerate}
\end{definition}

A partial order $\mathbb P \in \text{GS}_n$ will be determined by the choice of a $\mathrm{GS}_n$-sequence $d$ and a regular cardinal $\mu < \crit(d)$. Suppose $n>1$ and that we have defined $\text{GS}_m$ for $m < n$, and we have a function defined on pairs $(\mu,d) \in H_\theta$ that outputs a partial order $\mathbb P(\mu,d) \in \text{GS}_m$ whenever $d$ is a sequence of length $\omega \cdot m$ as above and $\mu < \crit(d)$ is regular.

Let $d= \la U_\alpha,K_\alpha : \alpha < \omega \cdot n \ra$ be as above and let $\mu < \crit(d)$ be regular.  Conditions in $\mathbb P(\mu,d) \in \text{GS}_n$ take the form: 
 $$p = \la f_{0},e_1,(x_1,a_1),f_1,e_2,(x_2,a_2),f_2,\dots,e_l,(x_l,a_l),f_l,\vec F \ra.$$
 The \emph{stem} of $p$ is the initial segment obtained by removing $\vec F$.  The \emph{length} of $p$ as above is $l$.  We require:
 \begin{enumerate}
  \item For $1\leq i \leq l$: 
  \begin{enumerate}
  \item $|x_i| < \kappa$, $x_i \prec H_{\kappa^{+\omega \cdot (n-1) + i}}$, $\kappa_i := x_i \cap \kappa$ is inaccessible, the transitive collapse of $x_i$ is $H_{\kappa_i^{+\omega \cdot (n-1) + i}}$, and $\la U_\alpha,K_\alpha : \alpha < \omega \cdot (n-1) \ra \in x_i$.
   \item Let $\pi \colon x_i \to H$ be the transitive collapse map. Put $\pi(\la U_\alpha,K_\alpha : \alpha < \omega \cdot(n-1) \ra) := \la u^i_\alpha,k^i_\alpha : \alpha < \omega \cdot (n-1) \ra := d_i$.  We require that $d_i$ is a $\mathrm{GS}_{n-1}$-sequence, $a_i$ is a sequence of functions $\la b^i_\alpha : \alpha < \omega \cdot (n-1) \ra$ such that $\dom(b^i_\alpha) \in u^i_\alpha$ and $[b^i_\alpha]_{u^i_\alpha} \in k^i_\alpha$.
  \end{enumerate}
  \item $f_{0} \in \col(\mu,\kappa)$, and if $l > 0$, then $\la f_{0} \ra \!^\frown e_1 \!^\frown a_1 \in \mathbb P(\mu,d_1)$, where $f_0\!^\frown e_1$ is the stem of the condition. 
  \item For $1 \leq i < l$, $x_i \in x_{i+1}$, and $\la f_{i}\ra \!^\frown e_{i+1} \!^\frown a_{i+1} \in \mathbb P(\kappa_{i}^{+\omega \cdot n +2},d_{i+1})$, where $f_i \!^\frown e_{i+1}$ is the stem.
   \item $f_l \in \col(\kappa_l^{+\omega\cdot n+2},\kappa).$
  \item $\vec F$ is a sequence of functions $\la F_\alpha : \alpha < \omega \cdot n \ra$ such that for each $\alpha$, $\dom F_\alpha \in U_\alpha$ and $[F_\alpha]_{U_\alpha} \in K_\alpha$.
 \end{enumerate}
 Suppose we have two conditions
 $$p = \la f_{0},e_1,(x_1,a_1),f_1,\dots,e_l,(x_l,a_l),f_l,\vec F \ra;$$
 $$q = \la f'_{0},e'_1,(x'_1,a'_1),f'_1,\dots,e'_m,(x'_m,a'_m),f'_m,\vec F' \ra.$$
 We put $q \leq p$ when:
 \begin{enumerate}
  \item $m \geq l$, and for $1\leq i \leq l$, $x_i = x'_i$.
\item For $i \leq l$, $f'_i \supseteq f_i$.
  \item For $1 \leq i \leq l$, $\la f'_{i-1} \ra \!^\frown e'_i \!^\frown a'_i \leq \la f_{i-1}\ra \!^\frown e_i \!^\frown a_i$ in the relevant partial order from $\text{GS}_{n-1}$.
    \item For $l < i \leq m$, $x'_i \in \dom F_{\omega \cdot (n-1) + i}$ and $f'_i \supseteq F_{\omega \cdot (n-1) + i}(x_i)$.
  \item $\vec F \restriction \omega\cdot(n-1) \in x$ if $x = x'_i$ for $l < i \leq m$, or if $x\in\dom F'_{\omega\cdot(n-1)+k}$ for $k>m$.
  \item Put $f_{k} = F_{\omega \cdot (n-1) + k}(x'_{k})$ for $l<k<m$.  If $l < i \leq m$ and $\pi : x_i \to H$ is the transitive collapse map, then $\la f'_{i-1} \ra \!^\frown e'_i \!^\frown a'_i \leq \la f_{i-1}\ra \!^\frown \pi(\vec F \restriction \omega \cdot (n-1))$.
  \item For each $\alpha < \omega \cdot n$, $\dom F'_\alpha \subseteq \dom F_\alpha$, and for each $x \in \dom F'_\alpha$, $F'_\alpha(x) \supseteq F_\alpha(x)$.
  \end{enumerate}
  
  Finally, we may define the order-$\omega$ forcings which generically stack the order-$n$ forcings for finite $n$.  Everything looks quite similar, except now our sequences of functions $\vec F$ have length $\omega^2$, and stems of length $n>0$ look like stems of length-1 conditions from forcings in $\text{GS}_{n+1}$.
  
\begin{remark} 
Unlike the standard supercompact Radin forcing (such as in \cite{Krueger}), the generic Radin point $x_\alpha$ for limit $\alpha$ is strictly larger than $\bigcup_{\beta < \alpha} x_\beta$. This discontinuity plays an important role in the proof of the Prikry Property.
\end{remark}

  We define some notions to describe the conditions in our forcings.  A \emph{type-1 sequence} is a natural number.  For $n > 1$, a \emph{type-$n$ sequence} is a finite sequence of type-$(n-1)$ sequences.    We can define inductively a partial order on these sequences.  For a type-1 sequence, this is just the usual linear order.  If $s = \la t_1,\dots,t_l \ra$ and $s' =   \la t'_1,\dots,t'_m \ra$ are of type-$n$, then we say $s' \geq s$ when $m \geq l$ and $t'_i \geq t_i$ for $1\leq i \leq m$.  It is easy to see by induction that this ordering is upward-directed.
  
  If $p \in \mathbb P \in \mathrm{GS}_1$, then by the \emph{shape of $p$} we mean its length.  If $s = \la t_1,\dots,t_l \ra$ is a type-$n$ sequence, and $$p = \la f_{0}, e_1,(x_1,a_1),f_1,\dots,e_l,(x_l,a_l),f_l,\vec F \ra \in \mathbb P \in \text{GS}_n,$$ then we say, inductively, that the stem of $p$ \emph{has shape $s$} if each $\la f_{i-1} \ra ^\frown e_i \!^\frown a_i$ has shape $t_i$.  If $s = \la t_1,\dots,t_l\ra$ is such that each $t_i$ is a type-$i$ sequence, and $p \in \mathbb P \in \text{GS}_\omega$ takes the same form as above, then we say $p$ has shape $s$ if each $\la f_{i-1} \ra ^\frown e_i \!^\frown a_i$ has shape $t_i$.  Note that if $q \leq p$, then the shape of $q$ is greater or equal to the shape of $p$ in the ordering on sequences.  Since the shape of a condition only depends on its stem, we will also speak of the shapes of stems and their subsequences.
  
  Suppose $\mathbb P \in \text{GS}_n$ for $n\leq\omega$.  For conditions $p,q \in \mathbb P$, we say $p \leq^* q$ if $p \leq q$ and they have the same shape.  If $p \leq q$ and $\stem(p)$ is an initial segment of $\stem(q)$, then we say $p \preceq q$.  We have an operation $p \wedge \vec F$ defined similarly as before, in the discussion preceding Lemma \ref{dc1}. 
  
\begin{lemma}
\label{decisive coloring}
Suppose $\mathbb P(\mu,d) \in \mathrm{GS}_n$, $\mu>\omega$, and $c : \mathbb P \to 3$ is a decisive coloring.
\begin{enumerate}
\item There is a sequence $\vec F$ such that for every condition $p$, every two $r,r' \preceq p \wedge \vec F$ of the same shape have the same color.
\item For every condition $p$, there is $q \leq^* p$ such that every two  $r,r' \leq q$ of the same shape have the same color.
\end{enumerate}
\end{lemma}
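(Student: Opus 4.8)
The plan is to prove the lemma by induction on $n$, with Lemma~\ref{dc1} serving as the base case $n=1$. At order $n>1$ the argument closely follows that of Lemma~\ref{dc1}, with ``length'' replaced everywhere by ``shape'', and with one additional ingredient: whenever a new block is processed, the inductive hypothesis for $\text{GS}_{n-1}$ is applied to the order-$(n-1)$ forcing sitting (after transitive collapse) inside the relevant Prikry point. As in Lemma~\ref{dc1}, the amalgamation steps use the $\kappa$-directed-closure of the collapse filters $K_\alpha$, diagonal intersections with respect to the normal measures $U_\alpha$, and --- decisively --- the fact that all of the $K_\alpha$ (for $\alpha<\omega\cdot n$, and for $n=\omega$ all $\alpha<\omega^2$) together with the bottom collapse $\col(\mu,\kappa)$ are countably closed, which is the point where the hypothesis $\mu>\omega$ is used.

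For part (1) at a finite $n>1$: fix a decisive coloring $c$. There are only countably many type-$n$ shapes, and they form a well-founded partial order under $\leq$ (an easy induction on $n$), so we may argue by a secondary induction on the shape $\sigma$, establishing the claim: \emph{for every decisive coloring of the conditions of shape $\sigma$ there is a tail $\vec F$ such that for every condition $p$, any two $r,r'\preceq p\wedge\vec F$ of shape $\sigma$ have the same color}. Granting this for all $\sigma$, a lower bound $\vec F$ of the countably many resulting tails, obtained from countable closure, works for part (1): since $\vec F\leq\vec F_\sigma$ for each $\sigma$, the shape-$\sigma$ extensions of $p\wedge\vec F$ are among those of $p\wedge\vec F_\sigma$ and hence monochromatic. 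When $\sigma$ is empty, a condition has the form $\la f_0 \ra ^\frown\vec F$ and the base case of the secondary induction is verbatim that of Lemma~\ref{dc1}: for each $f_0\in\col(\mu,\kappa)$ choose, if possible, a witness tail giving color $>0$, amalgamate these into a single $\vec F$ by directed closure and diagonal intersection, and invoke decisiveness. When $\sigma=\la t_1,\dots,t_l\ra$ with $l\geq 1$, we process the $l$-th block, which comprises a top Prikry point $x_l\in\dom F_{\omega\cdot(n-1)+l}$, a top collapse $f_l$, and an order-$(n-1)$ condition $e_l{}^\frown a_l$ of shape $t_l$ sitting inside $x_l$. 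As in Lemma~\ref{dc1}, using the closure of $\col(\kappa^{+\omega\cdot n+2},j_U(\kappa))^{M_U}$ for $U=U_{\omega\cdot(n-1)+l}$, the genericity of $K_{\omega\cdot(n-1)+l}$, and decisiveness of $j_U(c)$, we first freeze the top collapse to a single ``color-absorbing'' function $f^*$. The new step is that for each length-$(l-1)$ stem $s$ and each candidate point $x$ we apply part (1) of the primary inductive hypothesis to the order-$(n-1)$ forcing $\mathbb P(\cdot,d_x)\in\text{GS}_{n-1}$ of which $e_l{}^\frown a_l$ is a condition ($d_x$ being the transitive-collapse image of the lower measure/filter data), getting an order-$(n-1)$ tail that homogenizes the color of the sub-block by its shape. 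Pulling back through $U$ gives, for each $s$, a set $A_s\in U$ and a color pattern; taking a diagonal intersection over $s$ (and using $\kappa$-completeness of $U$ to absorb, for each $s$, the countably many shapes $t_l$ into one measure-one set) we arrange that the color of a shape-$\sigma$ extension of a suitably prepared length-$(l-1)$ condition depends only on that condition's first $l-1$ blocks --- i.e., we obtain a decisive coloring of the shape-$\la t_1,\dots,t_{l-1}\ra$ conditions, to which the secondary inductive hypothesis applies. Combining the order-$(n-1)$ homogenizing tails (in the lower $\omega\cdot(n-1)$ coordinates), the top coordinate restricted to the diagonal intersection of the $A_s$, and the tail from the secondary hypothesis gives $\vec F$ for shape $\sigma$.

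The case $n=\omega$ follows the same scheme: a condition of length $k$ in a forcing from $\text{GS}_\omega$ has a stem shaped like that of a length-$1$ condition in a forcing from $\text{GS}_{k+1}$, so the restriction of $c$ to conditions of length $k$ is homogenized ``by shape'' by the already-established finite-order case, and one then amalgamates the countably many resulting tails (now of length $\omega^2$) into a single $\vec F$ by countable closure --- again using $\mu>\omega$ at the bottom collapse. Part (2) follows from part (1) exactly as in Lemma~\ref{dc1}: with $\vec F$ given by (1), let $q\leq^*p$ be obtained by meeting the tail of $p$ with $\vec F$ (keeping the stem $\stem(p)$); then any two $r,r'\leq q$ of the same shape are $\preceq\stem(p)\wedge\vec F$ of the same shape and hence get the same color, and the sharper ``color $0$ below, a fixed color above'' description is available because the shape order is well-founded.

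The step I expect to be the main obstacle is the successor step of the secondary induction in part (1): performing the Prikry-style freezing over the top measure $U_{\omega\cdot(n-1)+l}$ simultaneously with the invocations of the order-$(n-1)$ hypothesis over every candidate Prikry point and every length-$(l-1)$ stem, and then amalgamating all of the resulting requirements --- which now include full order-$(n-1)$ homogenizing tails, not merely single witness functions as in Lemma~\ref{dc1} --- coherently into a single sequence, all while maintaining the decisiveness bookkeeping so that ``same shape $\Rightarrow$ same color'' genuinely propagates from the inner blocks up to the whole condition.
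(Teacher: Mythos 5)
Your part (1) is essentially the paper's argument: induction on the order $n$ with Lemma \ref{dc1} as the base, the same freezing of the top collapse via the genericity of $K_{\omega\cdot(n-1)+l}$, the application of the order-$(n-1)$ hypothesis inside each candidate point $x$ followed by representing $x\mapsto a_x$ in the ultrapower, diagonal intersections over stems, and a final amalgamation of countably many tails by countable closure. Organizing the secondary induction by shape rather than by length (the paper fixes the length $l$ and handles all shapes $t$ of the last block at once via the colorings $c_t$, then takes a lower bound over the countably many $t$) is only a cosmetic difference.

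Part (2), however, has a genuine gap. You assert that any two $r,r'\leq q$ of the same shape are $\preceq \stem(p)\wedge\vec F$, but this is false: the extension relation $\leq$ allows strengthening the entries \emph{inside} the stem --- the collapse parts $f_i$ and, for $n>1$, the inner $\mathrm{GS}_{n-1}$ blocks $\la f_{i-1}\ra^\frown e_i^\frown a_i$ --- without end-extending, so an arbitrary $r\leq q$ need not be an end-extension of any condition with stem $\stem(p)$, and part (1) says nothing about it. Even in the base case, Lemma \ref{dc1}(2) is not ``meet the tail with $\vec F$ and keep the stem'': there one passes to a strengthened stem $s\leq\stem(p)$ chosen so that a nonzero color is achieved at the least possible length, and decisiveness is then used to see that all extensions of $s^\frown\vec F$ are colored by length. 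For $n>1$ the problem is compounded, because a strengthening of an inner block can change that block's shape or merely refine it, and no single choice of stem handles all such $r$ at once. The paper closes this by a downward recursion along the stem: at the top it freezes $f_l$ to a color-absorbing $f'_l$ using the closure of $\col(\kappa_l^{+\omega\cdot n+2},\kappa)$, then for each shape $t$ and each strengthening $s$ of the earlier stem it defines colorings $c_{s,t}$ of the block $\la f_{l-1}\ra^\frown e_l^\frown a_l$ and invokes part (2) of the inductive hypothesis for the $\mathrm{GS}_{n-1}$ forcing of that block (together with its weak closure) to replace the block by a $\leq^*$-stronger one homogenizing all the $c_{s,t}$ simultaneously, and repeats this for each of the $l$ blocks. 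Your proposal omits this entire mechanism, and the appeal to well-foundedness of the shape order does not substitute for it; so as written, part (2) does not follow.
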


\begin{proof}
 The case $n=1$ was proven in Lemma \ref{dc1}.
Assume $n >1$ and the lemma holds for $\mathrm{GS}_m$, $m<n$.  Let $\mathbb P(\mu,d) \in \mathrm{GS}_n$, with $\crit(d) = \kappa$ and $\omega < \mu <\kappa$.  Like before, we prove (1) by showing the following claim by induction:  For all $l<\omega$ and all decisive colorings of the conditions of length $l$, there is $\vec F_l$ such that for all $m\leq l$ and every condition $p$ of length $m$, every two $r,r' \preceq p \wedge \vec F$ of the same shape and of length $l$ have the same color.  This suffices, since we can find $\vec F$ that is a lower bound to the countably many $\vec F_l$.  Suppose $l=0$ and $c$ is such a coloring.  For every $s \in \col(\mu,\kappa)$, choose if possible some $\vec F_s$ such that $c(\la s \ra ^\frown \vec F_s) >0$.  Using the directed closure of the collapses and diagonal intersections, we may select a single sequence $\vec F$ such that $\la s \ra \wedge \vec F \leq \la s \ra ^\frown \vec F_s$ for all $s$.

Suppose the claim is true for $m < l$.  Let $c$ be a decisive coloring of the conditions of length $l$.  For each stem $s = \la f_{0}^s,\dots,(x^s_{l-1},a^s_{l-1}),f^s_{l-1}\ra$ of length $l-1$, and each candidate $(x,a)$ for the last node in a one-step extension containing $s$, we can define a coloring $c_{s,x}$ on conditions of the form $\la f^s_{l-1}\ra \!^\frown e ^\frown a \in \mathbb P(\kappa_{l-1}^{+\omega\cdot n +2},d_x)$ as follows.  First, as in the proof of Lemma \ref{dc1}, we find a sequence $\vec F = \la F_\alpha : \alpha < \omega \cdot n \ra$ such that for each stem $s$, each $x \in \dom F_{\omega \cdot (n-1) + l}$, and each choice of $e$ and $a$ such that there are $f$ and $\vec H$ such that $s ^\frown \la e,(x,a),f \ra ^\frown \vec H$ is a condition below $s ^\frown \vec F$ with color $>0$, then already $s ^\frown \la e,(x,a),F_{\omega \cdot (n-1) + l}(x) \ra ^\frown \vec F$ has this color.  
We can then define $c_{s,x}(\la f^s_{l-1}\ra \!^\frown e ^\frown a) = c(s ^\frown \la e,(x,a), F_{\omega \cdot (n-1) +l}(x) \ra ^\frown \vec F)$.  By the induction hypothesis on the order of Gitik-Sharon forcing, for each such $s,x$, there is a choice of $a_{s,x}$ such that $c_{s,x}(\la f^s_{l-1}\ra \!^\frown e ^\frown a_{s,x})$ depends only on the shape of $e$, for conditions below $\la f^s_{l-1}\ra ^\frown a_{s,x}$.  For each $x$, we can use diagonal intersections to select a sequence $a_x$ such that for all $s$, $\la f^s_{l-1} \ra \wedge a_x \leq \la f^s_{l-1} \ra \!^\frown a_{s,x}$.

  In the ultrapower by $U=U_{\omega \cdot (n-1) + l}$, the function $x \mapsto a_x$ represents a sequence of functions $\vec G$ strengthening $\vec F \restriction \omega \cdot (n-1)= \pi(j_U(\vec F \restriction \omega \cdot (n-1)))$, where $\pi$ is the transitive collapse of $j_U[H_{\kappa^{+\omega\cdot (n-1)+l}}]$.  Let $\vec F'$ be $\vec F$ with the intial segment below $\omega \cdot (n-1)$ replaced by $\vec G$.
  Thus we have for each stem $s$ of length $l-1$, a set $A_s \in U_{\omega \cdot (n-1) + l}$ such that for all $x \in A_s$, $a_x = \pi_x(F' \restriction \omega \cdot (n-1))$, and the color of $s ^\frown \la e,(x,a_x),F'_{\omega \cdot (n-1) +l}(x) \ra ^\frown \vec F'$ depends only on the shape of $e$, if $\la f^s_{l-1} \ra ^\frown e ^\frown a_x \leq \la f^s_{l-1} \ra ^\frown a_x$.  Let $A^*$ be the diagonal intersection of the $A_s$, and let $\vec F''$ be $\vec F'$ restricted to $A^*$ at coordinate $\omega \cdot (n-1) +l$.
  
  Now for any condition $p$ of shape $\la t_1,\dots,t_{l-1}\ra$, the color under $c$ of any $q \preceq p \wedge \vec F''$ of shape $\la t_1,\dots,t_{l-1},t\ra$ depends only on $t$.  So for each type-$(n-1)$ sequence $t$, let $c_t$ color the length-$(l-1)$ conditions accordingly.  Note that each $c_t$ inherits decisiveness from $c$.
   By the induction hypothesis, for each $t$, there is a sequence $\vec F_t$ such that for all $m < l-1$ and all $p$ of length $m$, every $q \preceq p \wedge \vec F_t$ of length $l-1$ has a color under $c_t$ depending only on the shape of $q$.  If $\vec F'''$ is a lower bound to the countably many sequences $\vec F_t$, then $\vec F'''$ satisfies the inductive claim for $l$.  This concludes the argument for (1).
   
  To show (2), let us assume inductively that it holds for $\mathrm{GS}_m$, $m<n$.  Let $\mathbb P \in \mathrm{GS}_n$, let $c : \mathbb P \to 3$ be decisive, and let $\vec F$ be given by (1).  
 Let $p \in \mathbb P$, with $\stem(p) = \la f_{0},\dots,f_{l-1},e_l,(x_l,a_l),f_l\ra$.  
    For every end-extension $q = \stem(p) ^\frown s ^\frown \vec F$ of $p \wedge \vec F$, the color of $q$ depends only on the shape of $s$.  Using the closure of $\col(\kappa_l^{+\omega\cdot n + 2},\kappa)$, we can find $f'_l \supseteq f_l$ such that for every strengthening $s$ of the initial segment of $\stem(p)$ before $f_l$, and every type-$n$ sequence $t$, if there is $f \supseteq f'_l$ such that some $s'$ of shape $t$ with $s ^\frown \la f\ra ^\frown s' \!^\frown \vec F \leq p \wedge \vec F$ has color $>0$, then already $s ^\frown \la f'_l\ra ^\frown s' \!^\frown \vec F$ has this color.
  
 Now for each type-$n$ sequence $t$, and each strengthening $s$ of $\stem(p)$ before $f_{l-1}$, we have a coloring $c_{s,t}$ of the conditions
  $\la f \ra ^\frown e ^\frown a \leq \la f_{l-1} \ra ^\frown e_l ^\frown a_l$ according to the color under $c$ of 
  $s ^\frown \la f,e,(x_l,a),f'_l \ra ^\frown s' \!^\frown \vec F,$
  where $s'$ is anything of shape $t$, such that the resulting condition is below $p \wedge \vec F$.  Using the inductive hypothesis and the weak closure of $\mathbb P(\kappa_{l-1}^{+\omega \cdot n +2},d_l)$, we find $\la f'_{l-1} \ra \!^\frown e'_l \!^\frown a'_l \leq^* \la f_{l-1} \ra ^\frown e_l ^\frown a_l$ such that any two extensions of the former of the same shape have the same color under every $c_{s,t}$.  As a result, we have that for any $s$ strengthening $\stem(p)$ before $f_{l-1}$, for any two $r,r'$ of the same shape below $s ^\frown \la f'_{l-1},e'_l,(x_l,a'_l),f'_l \ra \!^\frown \vec F$,
 for which $s$ is an initial segment of both, $c(r) = c(r')$.  We continue this process in the same fashion down the stem of $p$, in a total of $l$ steps, so that at step $k \leq l$, we find $\la f'_{l-k-1} \ra \!^\frown e'_{l-k} \!^\frown a'_{l-k} \leq^* \la f_{l-k-1} \ra ^\frown e_{l-k} \!^\frown a_{l-k}$, such that for every strengthening $s$ of the initial segment of $\stem(p)$ before $f_{l-k-1}$, any two conditions $r,r'$ of the same shape, with $s$ as an initial segment, and below $s ^\frown \la f'_{l-k-1},e'_{l-k},(x_{l-k},a'_{l-k}),f'_{l-k},\dots,(x_l,a'_l),f'_l,\vec F\ra,$
we have $c(r) = c(r')$.  Eventually we reach the desired condition $q \leq^* p$.
 
 The inductive argument for $\mathrm{GS}_\omega$ is entirely similar.
\end{proof}

\begin{corollary}\label{cor: prikry property GSn}
If $\mathbb P(\mu,d) \in \mathrm{GS}_n$, $1 \leq n \leq \omega < \mu$, then $\la \mathbb P(\mu,d),\leq,\leq^* \ra$ is a Prikry-type forcing.
Furthermore, for a condition $p_0$ of the form 
$$\la f_{0},e_1,(x_1,a_1),f_1,\dots,e_m,(x_m,a_m),f_m,\vec F \ra,$$ %of length $m>0$, 
$\mathbb P(\mu,d) \restriction p_0$ is canonically isomorphic to
$$ \mathbb P(\mu,d_1) \restriction \la f_0 \ra ^\frown e_1 \!^\frown a_1  \times\dots\times \mathbb P(\kappa_{m-1}^{+\omega\cdot n+2},d_m) \restriction \la f_{m-1} \ra ^\frown e_m \!^\frown a_m  \times \mathbb Q,$$
where $\mathbb Q$ is a weakly $\kappa_m^{+\omega\cdot m+2}$-closed Prikry-type forcing.
\end{corollary}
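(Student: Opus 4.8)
The approach is to mimic, for every $n$, the proof of the Prikry property already given for the order-$1$ forcings, substituting Lemma \ref{decisive coloring} for Lemma \ref{dc1}. Fix $\mathbb P = \mathbb P(\mu,d) \in \mathrm{GS}_n$ with $\omega < \mu$, a sentence $\sigma$ in its forcing language, and a condition $p$. Color a condition $0$ if it does not decide $\sigma$, $1$ if it forces $\sigma$, and $2$ if it forces $\neg\sigma$; this is a decisive coloring, so part (2) of Lemma \ref{decisive coloring} produces $q \leq^* p$ such that any two extensions of $q$ of the same shape receive the same color. If $q$ failed to decide $\sigma$, we could pick $r_0 \leq q$ with $r_0 \Vdash \neg\sigma$ and $r_1 \leq q$ with $r_1 \Vdash \sigma$, so $c(r_0) = 2$ and $c(r_1) = 1$; since the ordering on shapes is upward-directed, fix a shape $t$ above the shapes of $r_0$ and $r_1$ and strengthen $r_0, r_1$ to conditions $r_0', r_1' \leq q$ of shape $t$. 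By decisiveness $c(r_0') = 2 \neq 1 = c(r_1')$, contradicting the homogeneity of $q$. This argument uses the auxiliary fact that \emph{every condition extends to one of any prescribed shape above its own}, which I would prove by induction on $n$ in parallel with the definition of $\mathrm{GS}_n$: a next Prikry node can always be appended because each $\dom F_\alpha \in U_\alpha$ is nonempty and, by fineness of the normal ultrafilters $U_\alpha$ in $d$, the set $\{x : x_i \in x\}$ is measure one, so the chain requirement $x_i \in x_{i+1}$ can be met; and raising the shape of an already-present or a freshly appended block is exactly the order-$(n-1)$ instance of the statement, with the base case ($\mathrm{GS}_1$) being the trivial density of adding a Prikry point. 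That $\leq^* \subseteq \leq$ and that both are partial orders is immediate from the definitions.

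\textbf{Factorization.} Let $p_0 = \la f_0, e_1, (x_1,a_1), f_1, \dots, e_m, (x_m,a_m), f_m, \vec F \ra$ have length $m > 0$. Below $p_0$, a condition is determined by $m+1$ mutually independent pieces: for $1 \leq i \leq m$, the block $\la f_{i-1} \ra {}^\frown e_i {}^\frown a_i$, which now ranges over $\mathbb P(\kappa_{i-1}^{+\omega\cdot n+2}, d_i) \restriction \la f_{i-1} \ra {}^\frown e_i {}^\frown a_i$ (with $\kappa_0^{+\omega\cdot n+2}$ read as $\mu$; the parameter $d_i$ is frozen, being the image of the fixed sequence $\la U_\alpha, K_\alpha : \alpha < \omega\cdot(n-1) \ra$ under the transitive collapse of the fixed set $x_i$), together with the tail $\la f_m \ra {}^\frown \vec F$ and whatever nodes and blocks are appended past stage $m$, which is precisely $\mathbb Q := \mathbb P(\kappa_m^{+\omega\cdot n+2}, d) \restriction \la f_m \ra {}^\frown \vec F$, a restriction of an order-$n$ Gitik--Sharon forcing. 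The substance is that, for conditions below $p_0$, every defining clause of $\leq$ decouples: the requirements attached to a block of the frozen stem refer to that block alone, and the requirements attached to appended nodes --- together with the clauses governing their interaction with $\vec F \restriction \omega\cdot(n-1)$ and the transitive-collapse images $\pi_x(\vec F \restriction \omega\cdot(n-1))$ --- involve only data internal to $\mathbb Q$, since the blocks of $p_0$ are already complete and carry their own function sequences $a_i$ rather than referring to $\vec F$. Hence the evident map is an isomorphism, just as in Lemma \ref{gs1 factor}. Finally $\mathbb Q$ is of Prikry type: the Prikry property of the full forcing $\mathbb P(\kappa_m^{+\omega\cdot n+2}, d)$, established above, passes to any restriction below a condition, the same direct extensions witnessing it. And $\mathbb Q$ is weakly closed to the required degree, since its direct-extension order strengthens $f_m$ inside the $\kappa_m^{+\omega\cdot n+2}$-closed collapse $\col(\kappa_m^{+\omega\cdot n+2}, \kappa)$ and strengthens all higher components --- the sequence $\vec F$ and, for longer conditions, the order-$(n-1)$ blocks at future nodes --- inside forcings at least that $\leq^*$-closed, the latter by the inductive hypothesis of this very corollary. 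The $\mathrm{GS}_\omega$ case is identical, the only change being that a shape is now a finite sequence $\la t_1, \dots, t_l \ra$ in which $t_i$ is a type-$i$ sequence and block $i$ contributes a factor from the corresponding lower-order forcing.

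\textbf{Expected main obstacle.} Nearly all the real work has already been isolated into Lemma \ref{decisive coloring}, so the corollary is mostly organizational, and the routine points --- the factorization bookkeeping, the closure estimate for $\mathbb Q$, and antisymmetry of $\leq$ --- should be unproblematic. The one step that is not pure bookkeeping is the shape-realizability fact used to conclude the Prikry argument, and the delicate ingredient there is invoking fineness of the normal ultrafilters $U_\alpha$ to guarantee that the chain condition $x_i \in x_{i+1}$ is always attainable when appending a node; once that is in hand, a simultaneous induction on the order of the forcing finishes the argument. I would also take a little care, in the factorization, to confirm that the ``inner'' part $\vec F \restriction \omega\cdot(n-1)$ of the function sequence introduces no dependence of the tail factor $\mathbb Q$ on the frozen blocks --- it does not, exactly because those blocks carry their own function sequences $a_i$ and are untouched below $p_0$.
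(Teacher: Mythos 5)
Your first half (the Prikry property for the full forcing) is the paper's argument: color by decision, apply Lemma \ref{decisive coloring}(2), and use upward-directedness of shapes; your explicit density fact that any condition can be extended to any prescribed larger shape (via fineness of the $U_\alpha$, so that $x_i\in x_{i+1}$ and $\vec F\restriction\omega\cdot(n-1)\in x$ hold on measure-one sets) is a reasonable filling-in of what the paper leaves implicit.

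The gap is in the ``Furthermore'' clause, specifically in your identification $\mathbb Q = \mathbb P(\kappa_m^{+\omega\cdot n+2},d)\restriction \la f_m\ra^\frown\vec F$. That identification is false for $n\geq 2$. In $\mathrm{GS}_n$ the measure used at a node is tied to its \emph{position} in the stem: extending a length-$m$ condition, the next node is drawn from $\dom F_{\omega\cdot(n-1)+m+1}\in U_{\omega\cdot(n-1)+m+1}$, an ultrafilter on $\p_\kappa(H_{\kappa^{+\omega\cdot(n-1)+m+1}})$, and must collapse to $H_{\kappa_x^{+\omega\cdot(n-1)+m+1}}$; whereas in a length-$0$ restriction of $\mathbb P(\nu,d)$ the first node comes from $U_{\omega\cdot(n-1)+1}$ on $\p_\kappa(H_{\kappa^{+\omega\cdot(n-1)+1}})$. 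In the order-$1$ case (Lemma \ref{gs1 factor}) this mismatch is repaired by \emph{shifting} the parameter sequences to $\vec U',\vec K'$; note that even there the tail is not a restriction of the forcing with the unshifted parameters, and for $n\geq 2$ the shift leaves the class $\mathrm{GS}_n$ altogether, since the definition forces the $i$-th node to be an elementary submodel of $H_{\kappa^{+\omega\cdot(n-1)+i}}$ taken from $U_{\omega\cdot(n-1)+i}$. This is precisely why the corollary asserts only that $\mathbb Q$ is a weakly closed Prikry-type forcing rather than exhibiting it as a member of $\mathrm{GS}_n$, and it is why your step ``the Prikry property of $\mathbb P(\kappa_m^{+\omega\cdot n+2},d)$ passes to the restriction $\mathbb Q$'' has nothing to attach to: $\mathbb Q$ is not such a restriction, and no instance of Lemma \ref{decisive coloring} has been proved for the tail forcing itself.

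The repair is short and is what the paper does: write $\mathbb P(\mu,d)\restriction p_0\cong\mathbb R\times\mathbb Q$ with $\mathbb Q$ the set of tail-ends, and given a decisive coloring $c$ of $\mathbb Q$, define $c'(r,q)=c(q)$ on $\mathbb R\times\mathbb Q$; this is decisive, so Lemma \ref{decisive coloring}(2) applied to the full forcing below $(1,q)$ gives $p\leq^*(1,q)$ all of whose same-shape extensions get the same $c'$-color, hence a $\leq^*$-extension of $q$ in $\mathbb Q$ all of whose same-shape extensions get the same $c$-color; then your directedness argument yields the Prikry property for $\mathbb Q$. (Alternatively one could redo the homogenization lemma for tail forcings, but that is exactly the extra work the coloring transfer avoids.) Your reading of the weak closure of $\mathbb Q$ from the collapse $\col(\kappa_m^{+\omega\cdot n+2},\kappa)$ and the closure of the remaining components is fine, and your description of the frozen factors $\mathbb P(\kappa_{i-1}^{+\omega\cdot n+2},d_i)\restriction\la f_{i-1}\ra^\frown e_i^\frown a_i$ and the decoupling of the clauses of $\leq$ below $p_0$ matches the intended isomorphism.
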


\begin{proof}
Let $\sigma$ be a sentence in the forcing language, and color conditions 0 if they do not decide $\sigma$, 1 if they force $\sigma$, and 2 if they force $\neg\sigma$.  Let $p \in \mathbb P(\mu,d)$, and let $q \leq^* p$ be such that any two extensions of $q$ of the same shape have the same color.  If $q$ does not decide $\sigma$, then by the fact that the ordering on sequences is upward-directed, we can find $r,r' \leq q$ of the same shape that force opposite decisions about $\sigma$, a contradiction.

For the second claim, the map is the obvious one, where the elements of $\mathbb Q$ are the tail-ends beyond place $m$, of conditions below $p_0$.  Let us write $\mathbb P(\mu,d) \restriction p_0$ as $\mathbb R \times \mathbb Q$.  From any decisive coloring $c$ of the conditions in $\mathbb Q$, we can define a decisive coloring $c'$ of $\mathbb R \times \mathbb Q$ by setting $c'(r,q) = c(q)$.  Given any $q \in \mathbb Q$, we can find $p \leq^* (1,q)$ such that any two $p',p'' \leq p$ of the same shape have the same color under $c'$.  This means that any two $q',q'' \leq q$ of the same shape have the same color under $c$.  Then we apply the argument of the previous paragraph.
\end{proof}

If $\mathbb P(\mu,d) \in \mathrm{GS}_n$ for $1 < n < \omega$, with $\crit(d)= \kappa$, and $G \subseteq \mathbb P(\mu,d)$ is generic over $V$, then we have a sequence $\la x_i,G_i : 1\leq i < \omega \ra$ such that:
\begin{enumerate}
\item Each $x_i$ is a typical point in $\p_\kappa(H_{\kappa^{+\omega \cdot (n-1) + i}})$.
\item $\la x_i : 1\leq i < \omega \ra$ is $\in$- and $\subseteq$-increasing, with $\bigcup_i x_i = H_{\kappa^{+\omega \cdot n}}$.
\item $G_1$ is $\mathbb P(\mu,d_1)$-generic, and for $i >1$, $G_i$ is $\mathbb P(\kappa_{i-1}^{+\omega \cdot n +2},d_i)$-generic, where $\kappa_i$ and $d_i$ are as in the definition of $\mathrm{GS}_n$.
\end{enumerate}
From $\la x_i,G_i : 1\leq i < \omega \ra$, we can recover $G$ as the collection of all conditions
$\la f_0,e_1,(x_1,a_1),f_1,\dots,e_l,(x_l,a_l),f_l,\vec F \ra$ such that:
\begin{enumerate}
\item $\la x_1,\dots,x_l\ra$ is an initial segment of $\la x_i : 1 \leq i < \omega \ra$.
\item For $i > l$, $F \restriction \omega\cdot(n-1) \in x_i \in \dom F_{\omega \cdot n + i}$.
\item For $1\leq i \leq l$, $\la f_{i-1} \ra ^\frown e_i ^\frown a_i \in G_i$.
\item Putting $f_i = F_{\omega \cdot n + i}(x_i)$ for $i > l$, $\la f_{i-1} \ra ^\frown \vec F \restriction \omega\cdot(n-1) \in G_i$.
\end{enumerate}
We need the following characterization of genericity, proof of which is essentially the same as for Lemma \ref{genchar}:

\begin{lemma}\label{lemma: genericity for GS_n}
Suppose $d = \la U_\alpha,K_\alpha : \alpha < \omega \cdot n \ra$ and $\mathbb P(\mu,d) \in \mathrm{GS}_n$, with $\omega < \mu < \crit(d) = \kappa$. 
Suppose in some outer model $W \supseteq V$, there is a sequence $\la x_i,G_i : 1\leq i < \omega \ra$ as above.  

Then this sequence generates a $V$-generic filter $G$ for $\mathbb P(\mu,d)$ iff for every sequence $\vec F = \la F_\alpha : \alpha < \omega \cdot n \ra$ such that $\la \emptyset \ra^\frown \vec F$ is a condition, there is $m < \omega$ such that for all $k \geq m$, $\vec F \restriction \omega \cdot(n-1) \in x_k \in \dom F_{\omega \cdot (n-1) + k}$, and 
$$\la F_{\omega \cdot (n-1) + k}(x_{k}) \ra ^\frown \pi_{k+1}( \vec F \restriction \omega \cdot (n-1)) \in G_{k+1},$$
where $\pi_{k+1}$ is the transitive collapse of $x_{k+1}$.
\end{lemma}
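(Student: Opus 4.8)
The plan is to follow the proof of Lemma~\ref{genchar}, using the shape-homogeneity provided by Lemma~\ref{decisive coloring} where that proof used the length-homogeneity of Lemma~\ref{dc1}, and an adaptation of Lemma~\ref{autoproduct} to Prikry-type factors in place of the direct appeal to it. The forward direction is routine, exactly as the (clear) forward direction of Lemma~\ref{genchar}: if $G$ is $V$-generic and $\vec F$ is as in the statement, then since $\{q:q=q\wedge\vec F\}$ is dense one may pick some $q$ in it lying in $G$, say of length $l$, and then reading the tail functions of $q$ against the description of the members of $G$ recalled before the statement shows that the required conclusion holds for all $k>l$.

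For the reverse direction, let $G$ be the filter generated by $\la x_i,G_i : 1\le i<\omega\ra$ and let $D\in V$ be dense open in $\mathbb P(\mu,d)$. Color a condition $1$ if it belongs to $D$ and $0$ otherwise; since $D$ is open this is decisive, so by Lemma~\ref{decisive coloring}(1) fix $\vec F$ such that for every condition $p$, any two $r,r'\preceq p\wedge\vec F$ of the same shape get the same color. Apply the hypothesis to this $\vec F$ to obtain $m$. Since the weakest condition of every $\mathrm{GS}_{n-1}$-forcing belongs to the corresponding generic, the condition $p$ of length $m$ with Prikry points $x_1,\dots,x_m$, trivial blocks, and tail $\vec F$ lies in $G$ --- the clauses governing positions beyond $m$ being exactly what the hypothesis supplies --- and $p=p\wedge\vec F$. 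By Corollary~\ref{cor: prikry property GSn}, $\mathbb P(\mu,d)\restriction p$ factors as a product of the $\mathrm{GS}_{n-1}$-forcings attached to the blocks of $\stem(p)$, followed by a weakly closed Prikry-type tail; each of these block-forcings is weakly $\kappa_{i-1}^{+\omega\cdot n+2}$-closed and Prikry-type, hence highly distributive, and of size below the critical point of the next, so the inductive argument of Lemma~\ref{autoproduct} shows that $\prod_{1\le i\le m}G_i$ is $V$-generic for this product. Intersecting with the (dense) projection of $\{q\in D:q\le p\}$ produces $q^*\in D$ with $q^*\le p$ whose first $m$ blocks are literally members of $G_1,\dots,G_m$, so that $q^*\preceq p'$ for the condition $p'\in G$ obtained by placing those $m$ blocks over $x_1,\dots,x_m$ with tail $\vec F$; note $p'=p'\wedge\vec F$. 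Finally one produces $q^{**}\in G$ by keeping the first $m$ blocks of $q^*$, taking Prikry points $x_1,x_2,\dots$ and blocks from $G_1,G_2,\dots$ at the later positions (of shape large enough to dominate $q^*$), and tail $\vec F$; this is legitimate because, for all $k\ge m$, $\vec F\restriction\omega\cdot(n-1)\in x_k\in\dom F_{\omega\cdot(n-1)+k}$ and the associated collapse datum lies in $G_{k+1}$, and because each $G_i$ is dense for large shapes. Then $q^{**}\preceq p'=p'\wedge\vec F$ and, by Lemma~\ref{decisive coloring}(1) together with openness of $D$ (which promotes the membership $q^*\in D$ to all $\preceq p'$ conditions whose shape dominates that of $q^*$), $q^{**}$ gets color $1$, i.e.\ $q^{**}\in D\cap G$. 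Hence $G$ is $V$-generic.

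The chief obstacle is this final substitution. In the order-$1$ case of Lemma~\ref{genchar} the swap merely exchanges Prikry points and leaves the length fixed, so length-homogeneity applies verbatim; here the swap alters the shapes of the interior blocks, so one cannot match shapes exactly and must instead argue through the partial order on shapes, using the openness of $D$ to pass $D$-membership from $q^*$ to all $\preceq p'$ conditions of dominating shape, while simultaneously checking that each $G_i$ really supplies blocks of the required shapes. This is the bookkeeping that has to be carried through the recursion on the order $n$; the case of $\mathrm{GS}_\omega$ then goes through in the same way, with type-$\omega$ shapes in place of type-$n$ shapes. A secondary point, already present in Lemma~\ref{genchar}, is the product-genericity of the $G_i$: because the factors are now Prikry-type Gitik-Sharon forcings rather than L\'evy collapses, one first invokes their Prikry property and closure and size bounds (Corollary~\ref{cor: prikry property GSn}) before rerunning the argument of Lemma~\ref{autoproduct}.
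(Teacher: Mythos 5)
Your proposal is correct and takes essentially the approach the paper intends: the paper gives no separate argument (it says the proof is ``essentially the same as for Lemma~\ref{genchar}''), and your adaptation --- shape-homogeneity from Lemma~\ref{decisive coloring} in place of Lemma~\ref{dc1}, the factorization of Corollary~\ref{cor: prikry property GSn} plus the distributivity-based argument of Lemma~\ref{autoproduct} to see that $G_1\times\dots\times G_m$ meets the projected dense set, and then the final swap to a condition generated by the $x_i$'s and $G_i$'s --- is exactly that, and you correctly isolate the shape-matching bookkeeping as the only new wrinkle. The only phrasing to tighten is the parenthetical crediting openness alone with promoting $D$-membership to all $\preceq p'$ conditions of dominating shape: what is actually used is that $q^*$ can be extended (remaining in $D$ by openness, and staying below $p'\wedge\vec F$ without touching its first $m$ blocks) to a condition of \emph{exactly} the shape of $q^{**}$, after which the same-shape homogeneity of Lemma~\ref{decisive coloring}(1) transfers the color, together with the check you already flag that each $G_i$ contains blocks of any sufficiently large shape below the base condition $\la F_{\omega\cdot(n-1)+i-1}(x_{i-1})\ra^\frown\pi_i(\vec F\restriction\omega\cdot(n-1))$ supplied by the genericity hypothesis.
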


To prove the main theorem, we will show by induction that, in a model satisfying the conclusion of Lemma \ref{prepmodel}, if $\mu = \nu^{+\omega\cdot k+2}$ and $\mathbb P(\mu,d) \in \mathrm{GS}_n$, for $1 \leq k,n < \omega$, then $\mathbb P(\mu,d)$ forces that $(\nu^{+\alpha+1},\nu^{+\alpha}) \chang (\nu^{+\beta+1},\nu^{+\beta})$ holds for all limit ordinals $\omega \leq \beta < \alpha \leq \omega^{n+1}$.  Note that we include the case $\nu = 0$ so that the lower pair may be $(\aleph_1,\aleph_0)$.

For the base case, suppose $\mu = \nu^{+\omega\cdot k +2}$, for $1\leq k <\omega$, and $\mathbb P(\mu,d) \in \mathrm{GS}_1$, with $\crit(d) =\kappa$.  By Lemma \ref{gs1 factor} and the preservation claim of Lemma \ref{prepmodel}, we have that in $V^{\mathbb P(\mu,d)}$, $(\nu^{+\omega\cdot i +1},\nu^{+\omega\cdot i}) \chang (\nu^{+\omega\cdot j +1},\nu^{+\omega\cdot j})$ holds for all $1 \leq j < i < \omega$.  Using again the fact that for $\alpha < \kappa$, $(\kappa^{+\omega+1},\kappa^{+\omega}) \chang (\alpha^{+\omega+1},\alpha^{+\omega})$ is indestructible by any $\alpha^{+\omega+2}$-closed forcing of size $\kappa$, the iterated ultrapower construction in the previous section
shows that $\mathbb P(\mu,d)$ also forces $(\kappa^+,\kappa) \chang (\nu^{+\omega\cdot i +1},\nu^{+\omega\cdot i})$ for $1 \leq i < \omega$. 

Assuming that the inductive claim holds for $n$, let us first argue for the weaker claim that if $\mu = \nu^{+\omega \cdot k +2}$, for $1\leq k < \omega$, and $\mathbb P(\mu,d) \in \mathrm{GS}_{n+1}$, then $\mathbb P(\mu,d)$ forces $(\nu^{+\alpha+1},\nu^{+\alpha}) \chang (\nu^{+\beta+1},\nu^{+\beta})$ to hold for all limit ordinals $\omega \leq \beta < \alpha < \omega^{n+2}$ (where the last inequality is strict).  A generic $G \subseteq \mathbb P(\mu,d)$ introduces a Prikry sequence of generics for $\mathrm{GS}_n$ forcings, $\la G_i : 1\leq i < \omega \ra$, where $G_1$ is generic for $\mathbb P(\mu,d_1)$, and for $i \geq 2$, $G_i$ is generic for $\mathbb P(\kappa_{i-1}^{+\omega\cdot(n+1) + 2},d_i)$.  In $V[G_1]$, $\kappa_1 =\nu^{+\omega^{n+1}}$, its successor is $(\kappa_1^{+\omega\cdot n +1})^V$, and we have $(\nu^{+\alpha+1},\nu^{+\alpha}) \chang (\nu^{+\beta+1},\nu^{+\beta})$ for all limit ordinals $\omega \leq \beta < \alpha \leq \omega^{n+1}$.  This is preserved by adjoining $\la G_j : 2 \leq j < \omega \ra$, which adds no subsets of $(\kappa_1^{+\omega\cdot n +1})^V$.
 For $i > 1$, we have that in $V[G_i]$,
$$(\kappa_{i-1}^{+\omega\cdot n+\alpha + 1},\kappa_{i-1}^{+\omega\cdot n +\alpha}) \chang (\kappa_{i-1}^{+\omega\cdot n+\beta + 1},\kappa_{i-1}^{+\omega\cdot n+\beta}),$$
holds for all limit ordinals $0\leq\beta<\alpha\leq\omega^{n+1}$.  For each such $i$, these instances of Chang's Conjecture are preserved by adjoining $\la G_j : i < j < \omega \ra$, which adds no subsets of $(\kappa_{i}^{+\omega\cdot n+1})^V$, the $(\omega^{n+1}+1)^{st}$ cardinal above $\kappa_{i-1}$ in the extension, and also by adjoining $G_1 \times\dots\times G_{i-1}$, which is generic for a $\kappa_{i-1}^{+\omega\cdot n}$-centered forcing.  By the transitivity of Chang's Conjecture, we can combine finitely many instances to bridge the different intervals that lie between adjacent Prikry points, and get the weaker conclusion for $n+1$.

The hard part is to improve the final inequality to allow $\alpha = \omega^{n+2}$.  If the critical point of $d$ as above is $\kappa$, then by applying transitivity again, it suffices to show that the extension satisfies $(\kappa^+,\kappa) \chang (\kappa_i^+,\kappa_i)$ for infinitely many $i$.  Towards this, we generalize Claim \ref{claim: generic over ultrapower, simple} and produce an iterated ultrapower for which we can find a generic filter for (the image of) a forcing $\mathbb{P} \in \mathrm{GS}_{n+1}$.

\begin{claim}\label{claim: generic over ultrapower, general}
Suppose $1\leq n < \omega$, $W$ is a model of $\ZFC$, and $\mathbb{P}(\mu, d) \in \mathrm{GS}_n^W$, with $\crit(d) = \kappa$.  Suppose $p\in \mathbb{P}(\mu,d)$ is a condition of length $l$,
$p = \la f_0,\dots,f_l \ra ^\frown \vec F$.  If $l >0$, let $\nu =  \kappa_l^{+\omega\cdot n +2}$ and let $\mathbb R$ be such that $\mathbb P(\mu,d) \restriction p \cong  \mathbb R \times \mathbb Q$, as in Corollary \ref{cor: prikry property GSn}.  Otherwise let $\nu = \mu$ and let $\mathbb R$ be trivial.

There is an elementary embedding $j\colon W\to W'$, where $W'$ is transitive, $\crit j = \kappa$, $j(\kappa) = \kappa^{+\omega\cdot n}$, and $\kappa^{+\omega\cdot n +1}$ is a fixed point of $j$.
If there is a $W$-generic filter 
$H\subseteq \mathbb R \times \col(\nu,\kappa),$
then there is a $W'$-generic filter $G \subseteq j(\mathbb{P}(\mu,d))$ in $W[H]$ such that $j(p)\in G$.
Moreover, $W[H]$ and $W'[G]$ have the same $\kappa$-sequences of ordinals.
\end{claim}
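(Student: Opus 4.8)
The plan is to prove the claim by induction on $n$, the base case $n=1$ being a mild restatement of Claim \ref{claim: generic over ultrapower, simple} once $\mathbb P(\mu,d)\restriction p$ is factored via Corollary \ref{cor: prikry property GSn}: the product $\mathbb R$ together with the bottom collapse $\col(\nu,\kappa)$ play the role of the generic $C_0\times C_1$ from Case 3 of the proof of Theorem \ref{cc any countable cof}, the iterated ultrapower $W=N_1\to N_2\to\cdots\to N_\omega=W'$ is built exactly as there from the measures of $d$ (well-founded by Fact \ref{fact: gaifman}), the diagonal points $x_i=j_{i,\omega}(y_i)$ and the collapse generics are read off, and that they generate a $W'$-generic $G$ for $j(\mathbb P(\mu,d))$ with $j(p)\in G$ follows from Lemma \ref{genchar}; the identities $j(\kappa)=\kappa^{+\omega}$ and $j(\kappa^{+\omega+1})=\kappa^{+\omega+1}$ and the closure clause are the counting and Bukovsky--Dehornoy arguments already given in Case 3.

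For the step from $n$ to $n+1$, let $\mathbb P(\mu,d)\in\mathrm{GS}_{n+1}^W$ with $\crit(d)=\kappa$ and $p$ a condition of length $l$, and factor $\mathbb P(\mu,d)\restriction p\cong\mathbb R\times\mathbb Q$ as in Corollary \ref{cor: prikry property GSn}. The embedding $j\colon W\to W'$ will be a recursively-defined iterated ultrapower mirroring the nested structure of $\mathrm{GS}_{n+1}$: at the top level one iterates $\omega$ times using successive images of the block $\la U_\alpha:\omega\cdot n\le\alpha<\omega\cdot(n+1)\ra$ that governs the Prikry points of $\mathrm{GS}_{n+1}$, and between consecutive top steps one interleaves, for the $i$-th Prikry coordinate, the iterated ultrapower supplied by the induction hypothesis applied to the $\mathrm{GS}_n$-forcing determined by the transitive collapse of the $i$-th diagonal point. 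Taking direct limits at limit stages, well-foundedness of all models is guaranteed by Fact \ref{fact: gaifman}, so $W'$ may be identified with a transitive class $\subseteq W$. The top steps deliver diagonal points $x_i$ and $W$-generics for the intermediate collapses, the recursive calls deliver the inner generic data $(e_i,a_i)$ and inner generics $G_i$ of the form demanded by the definition of $\le$ for $\mathrm{GS}_{n+1}$, and a counting argument using $\GCH$ — together with the inductive computation $j_i(\kappa_i)=\kappa_i^{+\omega\cdot n}$ for the inner embeddings and the fact that the relevant measures concentrate on sets of rank $<\omega\cdot(n+1)$ over $\kappa$ — yields $j(\kappa)=\kappa^{+\omega\cdot(n+1)}$ with $\kappa^{+\omega\cdot(n+1)+1}$ a fixed point.

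The given $H\subseteq\mathbb R\times\col(\nu,\kappa)$ feeds this machine: $\mathbb R$ is the generic for the bottom collapses and for the unfinished inner forcings carried by the stem of $p$ below its last Prikry point, and $\col(\nu,\kappa)$ is the collapse just above that point — together they are exactly the input the recursive applications of the induction hypothesis require. Assembling $\la x_i,(e_i,a_i),f_i,G_i:1\le i<\omega\ra$ from all of this, one checks via Lemma \ref{lemma: genericity for GS_n} (the analogue of Lemma \ref{genchar} for $\mathrm{GS}_{n+1}$) that the generated filter $G$ is $W'$-generic for $j(\mathbb P(\mu,d))$, and $j(p)\in G$ by construction. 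The closure clause follows as in Case 3: every element of $W'$ is $j(g)(x_{i_1},\dots,x_{i_r})$ for some $g\in W$, all the collapse generics and inner generics $G_i$ were chosen inside $W[H]$, and $\mathbb R\times\col(\nu,\kappa)$ has size $\le\kappa$; combining the Bukovsky--Dehornoy argument with the closure of the intermediate models shows $W[H]$ and $W'[G]$ have the same $\kappa$-sequences of ordinals.

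The main obstacle is the bookkeeping of the nested iteration: one must ensure the inner iterations run at coordinate $i$ cohere with those at coordinate $i+1$ (so that $x_{i+1}$ ``sees'' the tail data produced at $x_i$, as clauses (5) and (6) of $\le$ for $\mathrm{GS}_{n+1}$ require), that the pre-chosen seeds $K_\alpha$ push forward compatibly through both the outer and the inner parts, and that the recursive generics $j_i[G_i]$ are absorbed into the data used at later coordinates. All of this is to be arranged exactly as in the proof of Lemma \ref{lemma: genericity for GS_n} and, at the base of the recursion, as in Case 3.
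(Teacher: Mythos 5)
Your proposal matches the paper's proof in all essentials: an iterated ultrapower generated by the measures of $d$, applied in an order mirroring the nested structure of $\mathrm{GS}_n$ (the paper simply flattens your recursive interleaving into one linear iteration of length $\omega^n$, choosing the measure at step $\beta$ according to the Cantor normal form of $\beta$, Radin-style), with the Prikry points read off from seeds, the collapse filters supplied by $H$ along the stem and by the pushed guiding generics $K_\alpha$ beyond it, genericity verified through Lemma \ref{lemma: genericity for GS_n} by induction on $n$, and the closure clause obtained by the Bukovsky--Dehornoy argument together with the counting computation of $j(\kappa)$. Since this is essentially the same approach as the paper's, no further comparison is needed.
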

\begin{proof}
First, let us introduce a temporary notation in order to describe generic filters for $\mathbb{P}(\mu, d)$. 
Every ordinal $\alpha < \omega^\omega$, can be represented using Cantor Normal Form as a sum
\[\alpha = \omega^{m} \cdot k_{m} + \cdots + \omega \cdot k_1 + k_0,\]
where $k_i < \omega$ for all $i$. For $\alpha \neq 0$, let $n_\star(\alpha) = \min \{r \mid k_r \neq 0\}$ and let $m_\star(\alpha) = k_{n_\star(\alpha)}$. 

A generic $G \subseteq \mathbb P(\mu,d)$ can be unraveled into a sequence $\la x_\alpha : 1\leq \alpha < \omega^n \ra \subseteq \p_\kappa(H_{\kappa^{+\omega\cdot n}})$ and filters $\la C_\alpha : \alpha < \omega^n \ra$, from which we can recover $G$.  If $\rho_\alpha = x_\alpha \cap \kappa$, then the $\rho_\alpha$ are increasing, continuous, and cofinal in $\kappa$.  $C_0$ is generic for $\col(\mu,\rho_1)$, and for $\alpha \geq 1$, $C_\alpha$ is generic for $\col(\rho_{\alpha}^{+\omega \cdot n_\star(\alpha) + \omega + 2}, \rho_{\alpha + 1})$.  If $\beta < \alpha$ and $n_\star(\beta) \leq n_\star(\alpha)$, then $x_\beta \in x_\alpha$.

Let us note that by unraveling the criteria for being in the filters associated to the sequences, we can recover $G$ in the following way.  Let  $\vec F = \la F_\alpha : \alpha < \omega \cdot n \ra$ be a sequence of functions.  For each $\alpha < \omega^n$, define a finite sequence 
$\la F_{\alpha}^0,\dots, F_{\alpha}^{n-n_\star(\alpha)-1}\ra$ 
by putting $F_{\alpha}^0 = F_{\omega \cdot n_\star(\alpha)+m_\star(\alpha)}$, and for $0<k< n-n_\star(\alpha)$, $F_\alpha^k = \pi(F_\alpha^{k-1})$, where $\pi$ is the transitive collapse of $x_{\alpha+\omega^{n-k}}$, if that object is in $\dom \pi$.  Put $F'_\alpha= F_\alpha^{n-n_\star(\alpha)-1}$. Then we have $\la \emptyset \ra ^\frown \vec F \in G$ iff for all $\alpha < \omega^n$, $F'_{\alpha}$ is defined, $x_\alpha \in \dom  F'_\alpha$, and $F'_\alpha(x_\alpha) \in C_\alpha.$

Given a $\mathrm{GS}_n$-sequence $d$, let us construct an iterated ultrapower and a sequence $\langle x_\alpha, C_\beta \mid 1 \leq \alpha < \omega^n, \beta < \omega^n\rangle$ as above. We will assume, by induction on $n$ (simultaneously for all models of $\ZFC$, all $\mathrm{GS}_n$-sequences $d$ and all generics $H$) that this process provides a generic filter for the limit ultrapower.

Let $\mu,d,H,W$ be as hypothesized, and let $d = \langle U_\alpha, K_\alpha \mid \alpha < \omega \cdot n\rangle$.
 %be a sequence of ultrafilters and guiding generics as in the definition of $\mathrm{GS}_n$.  
Let us define by induction on $\omega^{n-1} \cdot l <  \alpha \leq \omega^n$, a model $N_\alpha$ and elementary embeddings $j_{\beta,\alpha} \colon N_\beta \to N_\alpha$. The choice of the measures which are applied at each step resembles the iterated ultrapower for obtaining a Radin generic filter (see \cite{MitchellHandbookInnerModels}). 

Let $\alpha_0 = \omega^{n-1}\cdot l +1$, and let $N_{\alpha_0} = W$.
For limit ordinals $\alpha$, let $N_\alpha$ be the direct limit of the system $\langle N_\beta, j_{\beta, \gamma} \mid \beta < \gamma < \alpha\rangle$ and let $j_{\beta,\alpha}$ be the corresponding limit embeddings.
For $\alpha = \beta + 1$, let $j_{\beta, \alpha} \colon N_\beta \to N_\alpha \cong \Ult(N_\beta, j_{\alpha_0,\beta}(U_{\omega \cdot n_\star(\beta) + m_\star(\beta)}))$, and let $j_{\gamma, \alpha} = j_{\beta,\alpha} \circ j_{\gamma, \beta}$ for $\gamma < \beta$. By Fact \ref{fact: gaifman}, $N_{\omega^n}$ is well-founded.  By counting arguments similar to those in the previous section, we can show that $j_{\alpha_0,\omega^n}(\kappa) = \kappa^{+\omega \cdot n}$, and $j_{\alpha_0,\omega^n}(\kappa^{+\omega\cdot n+1}) = \kappa^{+\omega\cdot n+1}$.

Let us define a sequence of filters $\la C_i \mid i < \omega^n \ra$ and a sequence of sets $\la x_i \mid 1 \leq i < \omega^n \ra$.
For $i \leq \omega^{n-1} \cdot l$ we extract $C_i$ and $x_i$ from the $W$-generic filter $H$. 

Let us define the Prikry points for $\alpha > \omega^{n-1} \cdot l$. 
Let $X_\alpha = \kappa^{+m_\star(\alpha)}$ if $n_\star(\alpha) = 0$, and $X_\alpha = H_{\kappa^{+\omega \cdot n_\star(\alpha) + m_\star(\alpha)}}$ otherwise.  Let $y_{\alpha}  =  j_{\alpha, \omega^n} [ j_{\alpha_0,\alpha}\left(X_\alpha \right) ].$
Note that 
$y_{\alpha} = j_{\alpha + 1, \omega^{n}}\left(j_{\alpha,\alpha + 1} [ j_{\alpha_0,\alpha}(X_\alpha) ] \right),$
and in particular it is in $N_{\omega^n}$.
In other words, we take $y_{\alpha}$ to be the seed of the measure $j_{\alpha_0,\alpha}(U_{\omega \cdot n_\star(\alpha)+m_\star(\alpha)})$, 
pushed by the map $j_{\alpha + 1, \omega^{n}}$ to the limit model $N_{\omega^n}$. Since the critical point of the elementary map $j_{\alpha+ 1, \omega^{n}}$ is above the cardinality of $y_\alpha$, it acts pointwise.

If $n_\star(\alpha) = n - 1$, let $x_\alpha = y_\alpha$.  Otherwise, let $\pi$ be the Mostowski collapse of $y_{\alpha+\omega^{n_\star(\alpha)+1}}$ and let $x_\alpha = \pi(y_\alpha)$.  Let $C_\alpha = j_{\alpha_0,\alpha}(K_{\omega \cdot n_\star(\alpha) + m_\star(\alpha)})$.
Let us verify that the obtained filter satisfies the requirements of Lemma \ref{lemma: genericity for GS_n}.

Let $m > l$. Let $G_{m}$ be the filter for the forcing $\mathbb{P}(\rho_{\omega^{n-1} \cdot (m-1)}^{ +\omega\cdot n + 2}, d_{m})^{N_{\omega^{n-1} \cdot m}}$, where $d_{m} = j_{\alpha_0,\omega^{n-1} \cdot m}(d) \restriction \omega \cdot (n-1)$, which is derived from the sequences $\la x_\alpha \mid \omega^{n-1} \cdot (m-1) \leq \alpha < \omega^{n-1} \cdot m \ra$ and $\langle C_\alpha \mid \omega^{n-1} \cdot (m-1) \leq \alpha < \omega^{n-1} \cdot m\rangle$. Let us assume, by induction, that $G_{m}$ is an $N_{\omega^{n-1} \cdot m}$-generic filter. Note that 
\[\mathbb{P}(\rho_{\omega^{n-1} \cdot (m-1)}^{ +\omega\cdot n + 2}, d_m)^{N_{\omega^{n-1} \cdot m}} = \mathbb{P}(\rho_{\omega^{n-1} \cdot (m-1)}^{ +\omega\cdot n + 2}, d_{m})^{N_{\omega^{n}}},\] 
and that $G_{m}$ is also $N_{\omega^n}$-generic. For $m \leq l$, $G_m$ is derived from the $W$-generic filter $H$, and thus it is clearly $N_{\omega^n}$-generic.

Let $z_i = x_{\omega^{n-1} \cdot i}$ for $1\leq i < \omega$.
Let us check that for every sequence $\vec{F}=\langle F_i \mid i < \omega\cdot n\rangle \in N_{\omega^n}$ there is some $k$ such that for all $m > k$, $\vec F \restriction \omega \cdot (n-1) \in z_{m} \in \dom F_{\omega\cdot(n-1)+m}$, and $\langle F_{\omega\cdot(n-1) + m}(z_m)\rangle ^\frown \pi_{m+1}(\vec F \restriction \omega \cdot (n-1)) \in G_{m+1}$. 
Let us show that for $\alpha_0 \leq \alpha < \omega^n$, if $\vec{F} \in N_{\alpha}$ is a sequence of functions such that $\langle\emptyset\rangle^\smallfrown \vec{F}$ is a condition in $j_{\alpha_0,\alpha}(\mathbb P(\mu,d))$, then for every $\beta > \alpha$, 
\begin{align*}&j_{\alpha,\omega^n}(\vec F \restriction \omega\cdot n_\star(\beta)) \in y_\beta \in \dom j_{\alpha,\omega^n}(F_{\omega \cdot n_\star(\beta) + m_\star(\beta)}),\\
 &\text{and } j_{\alpha,\omega^n}(F_{\omega \cdot n_\star(\beta) + m_\star(\beta)})(y_\beta) \in C_\beta.
\end{align*}
The relation $j_{\alpha,\omega^n}(\vec F \restriction \omega\cdot n_\star(\beta)) \in y_\beta$ holds simply because $\vec F \restriction \omega\cdot n_\star(\beta) \in (H_{j_{\alpha_0,\alpha}(\kappa)^{+\omega\cdot n_\star(\beta)+1}})^{N_\alpha}$.
The other claims are true since $\bar y_\beta := j^{-1}_{\beta+1,\omega^n}(y_\beta)$ is the seed of the measure $j_{\alpha_0,\beta}(U_{\omega \cdot n_\star(\beta) + m_\star(\beta)})$ and the domain of $j_{\alpha,\beta}(F_{\omega \cdot n_\star(\beta) + m_\star(\beta)})$ is large with respect to this measure. Moreover, this function represents an element of $j_{\alpha_0,\beta}(K_{\omega \cdot n_\star(\beta) + m_\star(\beta)})$. But 
\begin{align*}
& j_{\alpha, \beta + 1}(F_{\omega \cdot n_\star(\beta) + m_\star(\beta)})(\bar y_\beta) & = & &\\
& j_{\beta,\beta + 1}(j_{\alpha, \beta}(F_{\omega \cdot n_\star(\beta) + m_\star(\beta)}))(\bar y_\beta) & = & &\\
& [j_{\alpha,\beta}(F_{\omega \cdot n_\star(\beta) + m_\star(\beta)})]_{j_{\alpha_0,\beta}(U_{\omega \cdot n_\star(\beta) + m_\star(\beta)})} & \in & \ j_{\alpha_0,\beta}(K_{\omega \cdot n_\star(\beta) + m_\star(\beta)}) = C_\beta. &
\end{align*}

Note that for $\omega^{n-1}\cdot l < \alpha < \omega^n$, the sequence $\la y_\alpha,y_{\alpha+\omega^{n_\star(\alpha)+1}},\dots,y_{\alpha+\omega^{n-1}} \ra$ is both $\in$- and $\subseteq$-increasing.  Thus to compute $x_\alpha$, we get the same result by taking the image of $y_\alpha$ under the transitive collapse $y_{\alpha+\omega^{n_\star(\alpha)+1}}$, as by first collapsing $y_{\alpha+\omega^{n-1}}$, then collapsing the image of $y_{\alpha+\omega^{n-2}}$, etc., until we take the image of $y_\alpha$ under $n-n_\star(\alpha)-1$ successive collapses.  The point is that the latter process parallels exactly the sequence of collapses applied to a sequence of functions $\vec F$ to determine whether $\la\emptyset\ra^\frown\vec F$ is in the filter generated from the sequences $\la x_\alpha,C_\beta : 1 \leq \alpha < \omega^n,\beta<\omega^n\ra$.

Hence, if 
\begin{align*}&j_{\alpha,\omega^n}(\vec F \restriction \omega\cdot n_\star(\beta)) \in y_\beta \in \dom j_{\alpha,\omega^n}(F_{\omega \cdot n_\star(\beta) + m_\star(\beta)}),\\
 &\text{and } j_{\alpha,\omega^n}(F_{\omega \cdot n_\star(\beta) + m_\star(\beta)})(y_\beta) \in C_\beta,
\end{align*}
then $j_{\alpha,\omega^n}(\vec F)'_\beta \in x_\beta \in \dom j_{\alpha,\omega^n}(\vec F)'_\beta$, and $j_{\alpha,\omega^n}(\vec F)'_\beta(x_\beta) \in C_\beta$.
So if $\vec F \in N_{\alpha}$, the genericity criteria holds for $j_{\alpha,\omega^n}(\vec{F})$ for the cofinal segment above $\alpha$.  Since $N_{\omega^n}$ is a direct limit, the generated filter $G$ is generic. 

We would like to claim now that $N_{\omega^n}[G]$ has the same $\kappa$-sequences as $W[H]$. Indeed, since the forcing that introduces $H$ has cardinality $\kappa$, any sequence of ordinals in $W[H]$ has a name of cardinality $\kappa$ and thus can be coded using a sequence of ordinals of length $\kappa$ from $W$. 

Let $\langle \xi_i \mid i < \kappa\rangle$ be a sequence of ordinals in $W$. In $N_{\omega^n}$, for every ordinal there is a representing function $f_i$, and a finite sequence $s_i \subseteq \la y_\alpha : \omega^{n-1}\cdot l < \alpha < \omega^n \ra$, such that $j_{\alpha_0,\omega^n}(f_i)(s_i) = \xi_i$. By our choices of $x_i$ and $C_i$, the sequence $\la y_\alpha : \omega^{n-1}\cdot l < \alpha < \omega^n \ra$ can be computed from the generic filter $G$. 
Since $j_{\alpha_0,\omega^n}(\langle f_i \mid i < \kappa\rangle)$ and $j_{\alpha_0,\omega^n}(\la s_i \mid i < \kappa\rangle)$ are in $N_{\omega^n}$, and since $\langle y_\alpha \mid \omega^{n-1}\cdot l < \alpha< \omega^n\rangle \in N_{\omega^n}[G]$ we conclude that $\langle \xi_i \mid i < \kappa\rangle \in N_{\omega^n}[G]$.  
\end{proof}

Let us return to the proof of the theorem.  Recall that, assuming the inductive claim holds for $\mathrm{GS}_{n}$, we must only show that for every $\mathbb P(\mu,d) \in \mathrm{GS}_{n+1}$ with $\crit(d) = \kappa$, it is forced that $(\kappa^+,\kappa) \chang (\kappa_i^+,\kappa_i)$ holds for infinitely many $i$.  Let $p$ be a condition of length $l$, let $H \subseteq \mathbb R \times \col(\nu,\kappa)$ be as in Claim \ref{claim: generic over ultrapower, general}, with $H$ generic over $V$.  Note that $V[H] \models |(\kappa_l^{+\omega\cdot n})^V| = \kappa_l$, and $(\kappa_l^{+\omega\cdot n+1})^V = \kappa_l^+$.  By Lemma \ref{prepmodel}, $(\kappa^{+\omega\cdot (n+1)+1},\kappa^{+\omega\cdot (n+1)}) \chang (\kappa_l^+,\kappa_l)$ holds in $V[H]$.  Let $j : V \to M$ and $G$ be given by Claim \ref{claim: generic over ultrapower, general}, with $j(p) \in G$.  

Let $\mathfrak A \in M[G]$ be any structure on $j(\kappa^{+\omega\cdot (n+1)+1}) = \kappa^{+\omega\cdot (n+1)+1} = (j(\kappa)^+)^{M[G]}.$  By Chang's Conjecture in $V[H]$, there is a $\mathfrak B \prec \mathfrak A$ of size $\kappa_l^{+\omega\cdot n +1}$ such that $| \mathfrak B \cap \kappa^{+\omega\cdot (n+1)}) | = |\mathfrak B \cap j(\kappa) | = \kappa_l^{+\omega\cdot n}$.  By the closure of $M[G]$, $\mathfrak B \in M[G]$, and thus $M[G] \models (j(\kappa)^+,j(\kappa)) \chang (\kappa_l^+,\kappa_l)$.  By elementarity, the desired conclusion follows.

\section{Chang's Conjecture with the same target}
\label{section:thread}
In this section we will discuss two restricted versions of the Singular Global Chang's Conjecture. 

\begin{theorem}
\label{interval to omega}
Suppose that $\kappa$ is $\nu^+$-supercompact, where $\cf \nu = \kappa^{+}$ and $\nu$ is a limit of measurable cardinals, and $\alpha_\star$ is a countable ordinal.  Then there is a generic extension in which
\[(\mu^+, \mu) \chang (\omega_1, \omega),\]
for all $\mu < \aleph_{\alpha_\star}$.
\end{theorem}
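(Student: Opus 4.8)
The plan is to force with a poset $\mathbb{R}$ over a suitable preparatory model and to establish the required instances of Chang's Conjecture by an iterated-ultrapower argument in the style of Claims~\ref{claim: generic over ultrapower, simple} and~\ref{claim: generic over ultrapower, general}. The poset $\mathbb{R}$ will be a Gitik--Sharon/Magidor-style Prikry forcing built around $\kappa$, using the measurable cardinals cofinal in $\nu$ (there are $\kappa^{+}$ of them) and the $\nu^{+}$-supercompactness of $\kappa$, with L\'evy collapses interleaved so that $\kappa$ becomes $\aleph_{\alpha_\star}$ and the cardinals below it form a continuous block $\omega,\omega_1,\aleph_2,\dots$; a collapse of the least generic point to $\omega$ at the bottom ensures that the target is literally $(\omega_1,\omega)$. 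The hypothesis $\cf\nu=\kappa^{+}$ is used to keep $\kappa^{+}$ (hence $\aleph_2$) correct and to run the iterated ultrapower out to an arbitrary countable length.

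First I would pass to a preparatory extension. Using the $\nu^{+}$-supercompactness, force (reverse Easton below $\kappa$, together with a Laver-style preparation) to make that supercompactness indestructible under the $\kappa$-directed-closed forcing we will use, to arrange $\GCH$ in the relevant range, and---as in Lemma~\ref{prepmodel}---to make Chang's Conjecture hold frequently and robustly below $\nu$: fix a normal measure $\mathcal U$ on $\kappa$ with a set $A\in\mathcal U$ as in Lemma~\ref{lemma: reflecting cc}, and normal measures on a cofinal set of measurables in $\nu$. In the prepared model define $\mathbb R$ exactly in the manner of Section~\ref{various}: conditions are finite sequences of approximations through the chosen measurables with interleaved collapses and a measure-one tail of guiding functions, the combinatorial length is chosen to be $\alpha_\star$ (collapsing the intervening intervals), and the bottom factor is $\col(\omega,\cdot)$ applied to the least generic point. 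As in Sections~\ref{various} and~\ref{section: singular gcc}, a decisive-coloring argument yields the Prikry Property, $\mathbb R$ factors below a condition of positive length as a small product of collapses times a highly closed Prikry-type tail, and Lemmas~\ref{preserve},~\ref{stepup}, and~\ref{bottomcard} are used both to preserve instances of Chang's Conjecture through the collapses and to adjust their subscripts.

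The heart of the argument is to show $V^{\mathbb R}\models(\mu^+,\mu)\chang(\omega_1,\omega)$ for every $\mu<\aleph_{\alpha_\star}$. Fix such a $\mu$ and a condition deciding the statement. Generalizing Claim~\ref{claim: generic over ultrapower, general}---in particular its Cantor-normal-form bookkeeping for which measure is applied at which coordinate---I would build an iterated ultrapower $\la N_\eta,j_{\eta,\eta'}:\eta\le\eta'\le\theta\ra$ of the prepared model of the appropriate length, keeping $\omega_1$ and every $V^{\mathbb R}$-cardinal below $\kappa$ (as an ordinal) a fixed point; from a single generic $H$ for a small piece of $\mathbb R$'s bottom over $V$ (including the $\col(\omega,\cdot)$ factor) I would extract, coordinate by coordinate, a sequence of Prikry objects and guiding generics, verify via the analogue of Lemma~\ref{lemma: genericity for GS_n} that they generate an $N_\theta$-generic $G$ for $j_{0,\theta}(\mathbb R)$ containing the image of our condition, and show (Bukovsk\'y--Dehornoy style) that $N_\theta[G]$ has the same $\omega_1$-sequences of ordinals as $V[H]$. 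Since $V[H]$ is a small extension of $V$, Lemma~\ref{lemma: reflecting cc} supplies an instance $(\lambda^{+\gamma+1},\lambda^{+\gamma})\chang_{\rho^{+\gamma}}(\rho^{+\gamma+1},\rho^{+\gamma})$ there for suitable $\rho<\lambda$ in $A$, with $\rho^{+\gamma}$ exactly the cardinal collapsed to $\omega$ by the bottom of $\mathbb R$; Lemma~\ref{preserve} makes this survive that collapse as $(\lambda^{+\gamma+1},\lambda^{+\gamma})\chang(\omega_1,\omega)$. Transferring through the closure of $N_\theta[G]$ and applying the elementarity of $j_{0,\theta}$, we obtain that $\mathbb R$ forces the corresponding instance with source a former-$A$ cardinal; transitivity of Chang's Conjecture, run through these instances at all the relevant levels $\gamma$ and between all consecutive former-large cardinals, then yields $(\mu^+,\mu)\chang(\omega_1,\omega)$ for every $\mu<\aleph_{\alpha_\star}$.

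The main obstacle I expect is the combination of (i) carrying out the iterated-ultrapower and genericity bookkeeping for an arbitrary countable $\alpha_\star$, rather than stopping at the $\omega^\omega$ barrier of Section~\ref{section: singular gcc}, coordinating the sequence of measures applied with the recursive collapse structure of $\mathbb R$ coordinate by coordinate; and (ii) producing enough instances of Chang's Conjecture---at every level $\gamma$ and between all consecutive former-large cardinals---for transitivity to reach every $\mu<\aleph_{\alpha_\star}$, including the regular successor cardinals, where the full strength of the hypothesis (that $\kappa$ is $\nu^{+}$-supercompact with $\nu$ a limit of measurables of cofinality $\kappa^{+}$) and the indestructibility built into the preparation are essential.
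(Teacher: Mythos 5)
Your proposal goes down a genuinely different road from the paper, and as it stands it has two real gaps rather than just technical loose ends. First, the theorem demands $(\mu^+,\mu)\chang(\omega_1,\omega)$ for \emph{every} $\mu<\aleph_{\alpha_\star}$, including the regular successor cardinals of the extension, and your plan offers no mechanism for those. Transitivity cannot reach them: the Gitik--Sharon/iterated-ultrapower machinery of Sections \ref{various} and \ref{section: singular gcc} only produces instances whose upper pairs sit at the successors of the (former) singular limit levels, so chaining those instances down to $(\omega_1,\omega)$ still leaves every intermediate successor cardinal uncovered. The paper handles exactly this point by a completely different device: the cardinals that become successors are former measurables $\zeta_\beta$, the L\'evy collapse induces a normal ideal on $\zeta_\beta$ whose quotient has a countably closed dense set, Sakai's theorem then gives a Strong Chang's Conjecture to $(\aleph_1,\aleph_0)$, and Lemma 15 of \cite{EskewHayut2018} preserves it through the remaining closed collapses. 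Nothing in your sketch plays this role, and it is precisely why the hypothesis that $\nu$ is a limit of measurables appears in the statement.

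Second, you need all the instances to bottom out at one and the same pair $(\omega_1,\omega)$, and this uniformization is the actual heart of the paper's argument, which you replace with machinery that does not deliver it. For each candidate collapse pattern $\alpha$, Lemma \ref{lemma: reflecting cc} only yields \emph{some} $\rho_\alpha<\kappa$ as the lower anchor; the paper uses weak homogeneity of $\mathbb C_\alpha$ plus the $\kappa^+$-completeness of $\ns_{\kappa^+}$ (this is where $\cf\nu=\kappa^+$ enters, not where you place it) to fix a single $\rho_\star$ on a stationary set, and then the threading forcing $\mathbb P$ with Claim \ref{thread} to find one coherent thread of length $\alpha_\star$ so that a single collapse product $\mathbb C$ is simultaneously a projection of every $\mathbb C_\beta$ along the thread; the limit-level instances then transfer through the quotients, with no Prikry forcing anywhere. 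By contrast, your plan requires extending the $\mathrm{GS}_n$ iterated-ultrapower and genericity bookkeeping past $\omega^\omega$ to an arbitrary countable length and producing Chang's Conjecture between all the relevant levels --- you flag this as an ``expected obstacle,'' but it is exactly the difficulty the paper does not know how to overcome (it is the reason Section \ref{section: singular gcc} stops at $\aleph_{\omega^\omega}$) and deliberately sidesteps here by aiming only at a fixed target. There are also smaller structural problems: a Prikry-type singularization of $\kappa$ cannot make $\kappa$ into $\aleph_{\alpha_\star}$ when $\alpha_\star$ is a successor ordinal, since successor cardinals are regular. So while your outline correctly identifies the preparatory indestructibility and the use of Lemmas \ref{preserve} and \ref{lemma: reflecting cc}, the two essential ingredients of the actual proof --- the Fodor-plus-threading uniformization and the strong Chang's Conjecture at former measurables for the successor levels --- are missing, and the route you propose in their place runs into the very barrier the paper's method is designed to avoid.
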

\begin{theorem}
\label{uncountable interval}
Suppose there are two supercompact cardinals and $\alpha_\star>0$ is a countable limit ordinal.  Then there is a generic extension in which 
\[(\mu^+, \mu) \chang (\aleph_{\alpha_\star+1},\aleph_{\alpha_\star}),\]
for all singular $\mu$, $\aleph_{\alpha_\star}<\mu<\aleph_{\omega_1}$.
\end{theorem}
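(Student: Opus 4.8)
The plan is to imitate the structure of the previous section: first produce, from the two supercompact cardinals, a model in which a single cardinal $\delta$ is very supercompact and the target instance of Chang's Conjecture $(\lambda^{+},\lambda)\chang(\delta^{+\alpha_\star+1},\delta^{+\alpha_\star})$ already holds for many $\lambda$ below $\delta$ (or can be forced after mild collapsing), and is indestructible under the relevant distributive forcings; then run a Gitik--Sharon-style Prikry forcing of the kind developed in Sections \ref{various} and \ref{section: singular gcc} to collapse a tail of cardinals onto a singular of countable cofinality, while preserving the target pair $(\aleph_{\alpha_\star+1},\aleph_{\alpha_\star})$ and each collapsed instance of Chang's Conjecture. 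Concretely, one supercompact $\kappa$ will be Laver-indestructibly prepared and then used as the critical sequence of the Gitik--Sharon forcing around a second supercompact $\delta>\kappa$, so that $\kappa$ becomes a singular cardinal $\aleph_{\alpha_\star}$ of cofinality $\omega$ with $\aleph_\kappa=\kappa$, its successor is preserved, and each former cardinal $\lambda\in(\kappa,\delta)$ is collapsed to a singular cardinal of countable cofinality lying in the interval $(\aleph_{\alpha_\star},\aleph_{\omega_1})$.

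The first step is to set up, as in Lemma \ref{prepmodel} and the corollary after Theorem \ref{cc any countable cof}, a base model in which for every suitable $\lambda<\delta$ we have $(\lambda^{+\alpha_\star+1},\lambda^{+\alpha_\star})\chang(\kappa^{+\alpha_\star+1},\kappa^{+\alpha_\star})$, with this instance indestructible under $(\kappa^{+\alpha_\star+1},\kappa^{+\alpha_\star+1})$-distributive forcing of small size; the two supercompacts give us enough room, via the Laver preparation on $\kappa$ together with a Woodin-for-supercompactness-type or simply a supercompact-reflection argument on $\delta$, to obtain this simultaneously for a cofinal set of $\lambda$. The second step is to define the appropriate order-$\omega$ Gitik--Sharon forcing $\mathbb P$ around $\delta$ whose Prikry points accumulate to $\kappa$ (using normal measures on $\p_\kappa(\lambda)$ for $\lambda<\delta$), interleaving collapses $\col(\rho^{+\alpha_\star+2},\rho')$ between successive Prikry points $\rho<\rho'$ so that the image of the interval $[\rho^{+\alpha_\star},\rho'^{+\alpha_\star}]$ has the right order type and $\kappa^{+\alpha_\star}$ of the extension is $\aleph_{\alpha_\star}$. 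The Prikry property and the $\kappa^{+\alpha_\star+1}$-c.c.\ (centeredness) follow from the decisive-coloring machinery of Lemmas \ref{dc1} and \ref{decisive coloring}, so no new combinatorics is needed there; the preservation of each collapsed instance of Chang's Conjecture uses exactly the factoring of Lemma \ref{gs1 factor}/Corollary \ref{cor: prikry property GSn} together with Lemma \ref{preserve} and the indestructibility from Step 1.

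The hard part, as in Case 3 of Theorem \ref{cc any countable cof} and the $\alpha=\omega^{n+2}$ step of Section \ref{section: singular gcc}, is to get Chang's Conjecture with the collapsed singular $\aleph_{\alpha_\star}$ (i.e.\ with $\kappa$) as the lower point of the arrow, reaching the very top of the interval $(\aleph_{\alpha_\star},\aleph_{\omega_1})$ — equivalently, to show $(\delta^{+\alpha_\star+1},\delta^{+\alpha_\star})\chang(\kappa^{+},\kappa)$ holds in the extension, or that $(\mu^{+},\mu)\chang(\aleph_{\alpha_\star+1},\aleph_{\alpha_\star})$ holds for $\mu$ arbitrarily close to $\aleph_{\omega_1}$. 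This requires building an iterated ultrapower $\la N_i,j_{i,i'}\ra$ of length $\omega$ (well-founded by Fact \ref{fact: gaifman}) along the measures used in $\mathbb P$, constructing inside $V[H]$ (where $H$ is a generic for a small collapse making the relevant cardinals line up) a generic filter $G$ for $j_{1,\omega}(\mathbb P)$ over $N_\omega$ via a genericity criterion of the form of Lemma \ref{genchar}/Lemma \ref{lemma: genericity for GS_n}, and then showing $N_\omega[G]$ is sufficiently closed in $V[H]$ (the Bukovsky--Dehornoy argument, using that the preparatory collapse has size $<\delta$ and $N_\omega$ is generated by the critical sequence over $V$) so that a Chang substructure found in $V[H]$ by the indestructible instance from Step 1 transfers into $N_\omega[G]$; elementarity then pulls the conclusion back to $V[H]$, hence — after absorbing $H$ — to the final model. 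Coordinating the cofinality hypothesis $\cf\nu=\kappa^{+}$ is not needed here (that is Theorem \ref{interval to omega}); what is delicate is choosing the interleaved collapses so that \emph{every} singular $\mu\in(\aleph_{\alpha_\star},\aleph_{\omega_1})$ is realized as some $\rho^{+\alpha_\star}$ or $\delta^{+\alpha_\star}$ in the extension and simultaneously is a target of Chang's Conjecture from everything above it, which forces a somewhat careful bookkeeping of cardinals between consecutive Prikry points across the whole interval up to $\aleph_{\omega_1}$.
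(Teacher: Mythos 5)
There is a genuine gap here, and it is exactly at the point you defer as ``delicate bookkeeping.'' Your plan runs the Gitik--Sharon machinery of Sections \ref{various} and \ref{section: singular gcc} around the larger supercompact, but that machinery cannot deliver Theorem \ref{uncountable interval}, because the interval of upper pairs is uncountable. Any forcing of the type $\mathbb P(\mu,\vec\gamma,\vec\delta,\vec U,\vec K)$, or its order-$n$ and order-$\omega$ refinements, has a Prikry sequence of countable length, so in the extension the cardinals below the singularized cardinal fall into countably many blocks, and the instances of Chang's Conjecture it produces---between blocks, and, via the iterated ultrapower, with the singularized cardinal as the top pair---are countably many. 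The theorem requires $(\mu^+,\mu)\chang(\aleph_{\alpha_\star+1},\aleph_{\alpha_\star})$ for singular $\mu$ \emph{cofinal in} $\aleph_{\omega_1}$: uncountably many instances, all with the same lower pair, arranged coherently along an interval of order type $\omega_1$. With your forcing, either the former $\delta$ becomes some $\aleph_\gamma$ with $\gamma$ of countable cofinality, leaving all the singulars between it and $\aleph_{\omega_1}$ untouched, or, if you collapse the whole interval $[\kappa,\delta)$ to $\kappa$ as your description of the measures on $\mathcal P_\kappa(\lambda)$ suggests, there are no cardinals left in the relevant interval at all. The iterated-ultrapower argument you invoke handles one instance at the countable-cofinality top of such a forcing; it does not produce $\omega_1$-many instances cofinal in a cardinal of cofinality $\omega_1$, and no amount of rearranging interleaved collapses of countable length will.

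The paper's proof is built around a different mechanism, and this is where the second supercompact is actually spent: one supercompact is used to force Martin's Maximum (becoming $\aleph_2$), and the other, $\kappa$, is made Laver-indestructible. For each $\alpha<\kappa^+$ of countable cofinality one fixes an $\omega$-sequence $s_\alpha$ cofinal in $\alpha$ and a full-support product of collapses $\mathbb C_\alpha$ along it; reflection as in Lemma \ref{lemma: reflecting cc} gives $\mu_\alpha<\kappa$ with $1\Vdash_{\mathbb D_{\mu_\alpha}\times\mathbb C_\alpha}(\kappa^{+\omega+1},\kappa^{+\omega})\chang(\aleph_{\alpha_\star+1},\aleph_{\alpha_\star})$, weak homogeneity makes $\alpha\mapsto\mu_\alpha$ a ground-model function, and Fodor fixes its value on a stationary $S\subseteq\kappa^+$ (your Step 1 uniformizes on the wrong side; what must be uniformized is the reflected lower point below $\kappa$, so that a single $\mathbb D$ works for stationarily many $\alpha$). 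The core new ingredient is a threading poset whose conditions are countable continuous partial threads through the sequences $s_\alpha$ for $\alpha\in S$; one proves it preserves stationary subsets of $\omega_1$ by a master-condition argument over countable models $N$ with $\sup(N\cap\kappa^+)\in S$, and then MM yields a thread of length $\omega_1$. The resulting $\omega_1$-length product of collapses $\mathbb C$ is a projection of $\mathbb C_{t(\beta)}$ at every limit point of the thread, and since the quotients add no small sets of ordinals, the single extension by $\mathbb D\times\mathbb C$ satisfies all the required instances simultaneously. None of these ingredients---the forcing axiom, the threading forcing and its stationary-set preservation, or the uniformization of the reflected lower point---appears in your proposal, and without a replacement for them the approach does not go through.
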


The proof of both theorems follows closely the ideas from \cite{GolshaniHayut2018}, which in turn are motivated by the forcing arguments from \cite{MagidorShelah1994}. 

\begin{proof}[Proof of Theorem \ref{interval to omega}] Let us assume that $\kappa$ is Laver-indestructible (with respect to $\kappa$-directed closed forcing notions of cardinality $\leq\nu^+$) and that $\GCH$ holds above $\kappa$. If this is not the case, we can always force it using Laver forcing \cite{laverindestructible}. Let $\langle \zeta_\beta \mid \beta < \kappa^{+}\rangle$ be a continuous increasing sequence with $\sup \zeta_\beta = \nu$, $\zeta_0 = \kappa$, and $\zeta_{\beta+1}$ measurable for each $\beta < \kappa^+$. 

For every $\alpha < \kappa^{+}$ of countable cofinality, let us pick an increasing cofinal $\omega$-sequence $s_\alpha \colon \omega \to \alpha$. Let us assume that for each $\alpha$, $s_\alpha(0) = 0$, and $s(n)$ is a successor ordinal for $n>0$.

Let us consider the forcing 
\[\mathbb{C}_\alpha = \prod_{n < \omega} \mathbb{E}(\zeta_{s_\alpha(n)},\zeta_{s_\alpha(n + 1)}) \times \col(\zeta_\alpha^+,\nu^+),\]
where $\mathbb{E}(\mu, \delta)$ is the Easton-support product of $\col(\mu, \eta)$ over all inaccessible $\eta < \delta$. The product in the definition of $\mathbb{C}_\alpha$ is taken with full support. For properties of the Easton collapse, see \cite{EastonCollapse}.

For each $\alpha < \kappa^{+}$ of countable cofinality, after forcing with $\mathbb{C}_\alpha$, 
\[\left(\zeta_\alpha^+\right)^V = \left(\kappa^{+\omega + 1}\right)^{V^{\mathbb{C}_\alpha}}.\]
By the arguments of \cite{EskewHayut2018} related to Lemma \ref{lemma: reflecting cc}, there is $\rho_\alpha < \kappa$ such that 
\[V^{\mathbb{C}_\alpha} \models (\kappa^{+\omega + 1}, \kappa^{+\omega}) \chang (\rho_\alpha^+, \rho_\alpha),\]
and this remains true after forcing with $\mathbb D_\alpha = \col(\omega, \rho_\alpha) * \dot{\col}(\rho_\alpha^{+}, {<}\kappa)$. In fact, $(\zeta_\alpha^+,\zeta_\alpha) \chang (\rho_\alpha^+,\rho_\alpha)$ must already hold in $V$, by the distribuitivity of $\mathbb C_\alpha$.

Since the forcing $\mathbb{C}_\alpha$ is weakly homogeneous, the value of $\rho_\alpha$ depends only on $\alpha$ and does not depend on the generic filter for $\mathbb{C}_\alpha$. Therefore, the function $\alpha \to \rho_\alpha$ belongs to the ground model, $V$, and has the property that 
\[1 \Vdash_{\mathbb D_\alpha \times \mathbb C_\alpha} (\kappa^{+\omega + 1}, \kappa^{+\omega}) \chang (\check\rho_\alpha^+, \check\rho_\alpha).\]
By the $\kappa^+$-completeness of $\ns_{\kappa^+}$, there is a stationary set $S\subseteq \kappa^{+}$ and a cardinal $\rho_\star < \kappa$ such that for all $\alpha \in S$, $\rho_\alpha = \rho_\star$.  Let $\mathbb D$ be the common value of $\mathbb D_\alpha$ for $\alpha \in S$.  There is $n_0<\omega$ such that for every club $C \subseteq \kappa^+$, $\{ s_\alpha(n_0) : \alpha \in C \cap S \}$ is unbounded.  By Fodor's Lemma, we may assume that $s_\alpha \restriction n_0$ is constant on $S$.

Let us define a partial order $\mathbb P$ that searches for a ``thread'' of the sequences $s_\alpha$ for $\alpha \in S$.  A condition $t \in \mathbb P$ is a continuous increasing function from a countable successor ordinal $\gamma$ into $\kappa^+$, such that $\ran t \subseteq S \cup \bigcup_{\alpha<\kappa^+} \ran s_\alpha$, and for every limit ordinal $\beta < \gamma$, $\ran s_{t(\beta)} \subseteq \ran t$.  As in \cite{GolshaniHayut2018}, we have:
\begin{claim}
\label{thread}
For every $t \in \mathbb P$, every $\gamma < \omega_1$, and every $\xi < \kappa^+$, there is a stronger condition $t' \supseteq t$ with $\gamma \subseteq \dom t'$ and $s_\beta(n_0) > \xi$ for limit $\beta \in \dom t' \setminus \dom t$.
\end{claim}

In particular, we can find a thread of any countable length.  Let $t$ be a thread of length $\alpha_\star$.  Define a sequence $s : \alpha_\star \to \nu$ as follows.  If $\beta$ is an infinite limit ordinal, then $s(\beta) = \zeta_{t(\beta)}^+$, and otherwise $s(\beta) = \zeta_{t(\beta)}$.  Consider the forcing:

\[\mathbb{C} = \prod_{\beta < \alpha_\star} \mathbb{E}(s(\beta),s(\beta+1)).\]

First let us claim that in the generic extension by 
$\mathbb D \times \mathbb C$, we have $(\aleph_{\beta+1},\aleph_\beta) \chang (\aleph_1,\aleph_0)$ for limit $\beta < \alpha_\star$.  As in \cite{GolshaniHayut2018}, the projection properties of the Levy collapse, together with the fact that $\ran s_\beta \subseteq \ran t$ for limit $\beta < \alpha_\star$, imply that for each limit $\beta < \alpha_\star$, there is a projection $\pi_\beta : \mathbb C_\beta \to \mathbb C$.  If $\mathfrak A$ is a structure on $\zeta_\beta^+$ in $V^{\mathbb D \times \mathbb C}$, then in $V^{\mathbb D \times \mathbb C_\beta}$, there is an elementary $\mathfrak B \prec \mathfrak A$ such that $|\mathfrak B| = \rho_\star^+ = \aleph_1$, and $|\mathfrak B \cap \zeta_\beta | = |\rho_\star | = \aleph_0$.  Since the quotient forcing adds no sets of ordinals of size $<\kappa = \aleph_2$, the instance of Chang's Conjecture holds in $V^{\mathbb D \times \mathbb C}$.

To obtain the result for successors below $\alpha_\star$, we consider instead the forcing $\mathbb D * \dot{\mathbb C}$, where $\dot{\mathbb C}$ is the forcing with the same definition as $\mathbb C$, but constructed in $V^{\mathbb D}$ rather than $V$.  By \cite{Shioya2011}, there is a projection from $\mathbb D \times \mathbb C$ to $\mathbb D * \dot{\mathbb C}$ that is the identity on $\mathbb D$.  By the same argument as above, the relevant instances of Chang's Conjecture at limit ordinals also hold in $V^{\mathbb D * \dot{\mathbb C}}$.

Suppose $\beta < \alpha_\star$ is zero or a successor ordinal.  Let $\zeta=s(\beta) = \zeta_{t(\beta)}$, and let $\eta$ be the predecessor of $\zeta$ in the extension by $\mathbb D * \dot{\mathbb C}$, which is regular.  
Since $\zeta$ is measurable, in the extension 
$$\mathbb D * \prod_{\gamma<\beta} \dot{\mathbb E}(s(\gamma),s(\gamma+1)),$$
there is a normal ideal $I$ on $\zeta$ such that $\p(\zeta)/I$ contains a countably closed dense set---in particular the boolean algebra is a proper forcing.  By \cite{sakaisemiproper}, the following version of Strong Chang's Conjecture holds in this model: If $M$ is a countable elementary submodel of $H_{\zeta^+}$ then there is an elementary $M' \supseteq M$ such that $M \cap \eta = M' \cap \eta$ and $M \cap \zeta \not= M' \cap \zeta$.  By Lemma 15 of \cite{EskewHayut2018}, $(\zeta,\eta) \chang (\aleph_1,\aleph_0)$ is preserved by the formerly $\zeta$-closed quotient $\prod_{\beta\leq\gamma<\alpha_\star}\mathbb E(s(\gamma),s(\gamma+1))$.\end{proof}
\begin{remark}
Note that the assumption that $\nu$ is a limit of measurable cardinals is used in order to get Chang's Conjecture between successors of regulars and $\omega_1$. If we only want Chang's Conjecture to hold between successors of singulars and $\omega_1$, we can drop this assumption. 
\end{remark}

\begin{proof}[Proof of Theorem \ref{uncountable interval}]  Let $\kappa_0 < \kappa$ be supercompact, and let $\alpha_\star>0$ be a fixed countable limit ordinal.  First force Martin's Maximum (MM) while turning $\kappa_0$ into $\aleph_2$, as in \cite{fms1}.  By \cite{larson}, MM is indestructible under $\aleph_2$-directed-closed forcing.  Then, force with Laver's forcing, which is $\aleph_2$-directed-closed, to force that $\kappa$ is indestructibly supercompact and $\GCH$ holds above $\kappa$.

Next we need, for large enough $\mu<\kappa$, a forcing $\mathbb D_\mu$ that turns $\kappa$ into $\aleph_{\alpha_\star+3}$ 
while preserving $\omega_1$ and satisfying the hypotheses of Lemma \ref{lemma: reflecting cc}. If $\tau(\alpha_\star) = \omega$, let $\mathbb D_\mu = \col(\aleph_{\iota(\alpha_\star)+1},\mu) \times \col(\mu^{+\omega+2},{<}\kappa)$.  If $\tau(\alpha_\star)>\omega$, let $\vec \gamma$ be the identity sequence converging to $\omega$, and let $\vec\delta$ be a non-decreasing sequence summing to $\tau(\alpha_\star)$, with $\delta_1 \geq \omega$.  Let $\mathbb D_\mu = \mathbb P(\aleph_{\iota(\alpha_\star)+1},\vec\gamma,\vec\delta,\vec U,\vec K) \times \col(\mu^{+\omega+2},{<}\kappa)$, where $\vec U$ and $\vec K$ are $\omega$-sequences such that $U_n$ is a normal $\mu$-complete ultrafilter on $\p_\mu(\mu^{+n})$, and $K_n$ is 
sufficiently generic filter, as in Section \ref{various}.

Working in a model of MM, let us repeat the arguments from the beginning of the proof of Theorem \ref{interval to omega}. For each $\alpha < \kappa^+$ of countable cofinality, choose a cofinal increasing sequence $s_\alpha : \omega \to \alpha$ with $s_\alpha(0) = \kappa$ and $s_\alpha(n)$ is a double successor ordinal for $n>0$.  For each $\alpha < \kappa^+$ of countable cofinality, define
$$\mathbb C_\alpha = \prod_{n<\omega} \col(\kappa^{+s_\alpha(n)},\kappa^{+s_\alpha(n+1)-1}) \times \col(\kappa^{+\alpha+2},\kappa^{+\kappa^++1}).$$
For each $\alpha$, there is $\mu_{\alpha} < \kappa$ such that
$$1 \Vdash_{\mathbb D_{\mu_\alpha} \times \mathbb C_\alpha}
(\kappa^{+\omega+1},\kappa^{+\omega}) \chang (\aleph_{\alpha_\star+1},\aleph_{\alpha_\star}).$$

As above, let $S \subseteq \kappa^+$ be a stationary set of countable cofinality ordinals such that $\mu_\alpha$ has the same value for all $\alpha \in S$, and that the threading forcing $\mathbb P$ satisfies Claim \ref{thread}.  In particular, there is $n_0<\omega$ such that for all club $C \subseteq \kappa^+$, $\{ s_\alpha(n_0) : \alpha \in S \cap C \}$ is unbounded, and $s_\alpha \restriction n_0$ is the same for all $\alpha \in S$.
Let $\mathbb D = \mathbb D_{\mu_\alpha}$ for any $\alpha \in S$.  We now claim that $\mathbb P$ preserves stationary subsets of $\omega_1$. This is a reminiscent of the forcing for Friedman's Problem (see \cite[Theorem 9]{fms1}). 

Fix a stationary set $A \subseteq \omega_1$ and a condition $t_0 \in \mathbb P$. Let $\dot C$ be a $\mathbb P$-name for a club subset of $\omega_1$, and let 
\[M \prec (H_{\kappa^{++}},\in, \la s_\alpha : \alpha < \kappa^+ \ra, S, \mathbb P,t_0 ,A,\dot C,\lhd)\]
be such that $M \cap \kappa^+ = \delta \in S$, where $\lhd$ is a well-order of $H_{\kappa^{++}}$. Let us assume further that $M$ is the union of an increasing sequence of models $M_n$ such that $M_n \in M_{n+1}$.  We may also assume that $s_\delta(n_0) > \sup(M_0 \cap \delta)$.

Let $N'_n \prec M_n$ be the Skolem hull of the finite set $\ran s_\delta \cap M_n$.  For $\alpha<\omega_1$ and $n<\omega$, let $N'_n[\alpha]$ be the Skolem hull of $N'_n \cup \alpha$.  There is some $\alpha<\omega_1$ such that for all $n<\omega$, $N'_n[\alpha] \cap \omega_1 = \alpha \in A$.  Let $N_n =N'_n[\alpha]$ for such an $\alpha$.  Let $N = \bigcup N_n$, so $N \prec M$ is countable, $\sup(N \cap \kappa^+) = \delta$, $\ran s_\delta \subseteq N$, and $N \cap \omega_1 \in A$.  

Let $\la D_n : n < \omega \ra$ enumerate the dense subsets of $\mathbb P$ in $N$, such that $D_n \in N_n$. Using Claim \ref{thread}, we can build a sequence $t_0 \geq t_1 \geq t_2 \geq \dots$ such that for $n>0$, $t_n \in D_n \cap N_n$ and $\ran s_\delta \cap N_n \subseteq \ran t_n$. We achieve that by working inside $N_n$. We first extend $t_{n-1}$ by the finite set $\ran s_\delta \cap N_n$ and then extend this condition to meet $D_n$.  Let $\gamma = \ot(\bigcup_n t_n)$, and let $t = \bigcup_n t_n \cup \{\la \gamma,\delta \ra\}$. Then $t$ is an $(N,\mathbb P)$-master condition, and so it forces $A \cap \dot C \not= \emptyset$.

Applying MM, we find a thread $t$ of length $\omega_1$.  Let $s: \omega_1 \to \kappa^+$ be such that $s(\alpha) = t(\alpha)+2$ for limit $\alpha>0$ and $s(\alpha) = t(\alpha)$ otherwise.  Let us consider the forcing
$$ \mathbb C = \prod_{\alpha < \omega_1} \col(\kappa^{+s(\alpha)},\kappa^{+s(\alpha+1)-1}).$$
For every $\beta \in S$, there is a projection from $\mathbb C_\beta$ to $\mathbb C$.  Therefore, since the quotient adds no sets of ordinals of size $<\kappa$, $\mathbb D \times \mathbb C$ forces the desired conclusion.
\end{proof}
\begin{remark}
By slightly modifying the proof of Theorem \ref{uncountable interval}, one can strengthen the conclusion of the theorem as follows.  Suppose MM holds and there is a supercompact cardinal.  For every $\beta<\omega_2$ and every nonzero $\alpha_\star<\beta$ of countable cofinality, there is an $\omega_1$-preserving generic extension in which $(\mu^+, \mu) \chang (\aleph_{\alpha_\star + 1}, \aleph_{\alpha_\star})$ for all $\mu < \aleph_{\beta}$, such that $\cf \mu = \omega$ and $\mu > \aleph_{\alpha_\star}$. 
\end{remark}
\section{Open Problems}
The construction in Section \ref{section: singular gcc} is limited to instances of Chang's Conjecture between successors of singular cardinals below $\aleph_{\omega^\omega}$. In order to push this mechanism forwards, one needs to start with a model in which there is a cardinal $\kappa$ which is $\kappa^{+\alpha + 1}$-supercompact and Chang's Conjecture holds between any pair of singular cardinals in the interval $[\kappa, \kappa^{+\alpha}]$. Since our method to produce an interval with such properties with limits of limit cardinals includes Prikry forcing, it cannot preserve supercompactness.
\begin{question}
Is it consistent relative to large cardinals that $(\mu^+,\mu)\chang(\nu^+,\nu)$ holds whenever $\mu$ and $\nu$ have countable cofinality?
\end{question}
The known limitations on Global Chang's Conjecture do not seem to rule out the consistency of a strengthening of Theorem \ref{interval to omega} to a global statement:
\begin{question}
Is it consistent relative to large cardinals that $(\kappa^+,\kappa)\chang(\omega_1,\omega)$ holds for all infinite cardinals $\kappa$?
\end{question}

\bibliographystyle{amsplain}
\bibliography{ref}
\end{document}